\newcommand{\Z}{\mathbb{Z}}
\newcommand{\Q}{\mathbb{Q}}
\newcommand{\F}{\mathbb{F}}
\newcommand{\bH}{\mathbb{bH}}
\newcommand{\bN}{\mathbb{N}}
\newcommand{\pe}{\mathfrak{p}}
\newcommand{\qu}{\mathfrak{q}}
\newcommand{\vsim}{{\rotatebox{90}{$\sim$}}}
\newcommand{\ttilde}{\widetilde}
\newcommand{\RR}{\mathcal{R}}
\newcommand{\LL}{\mathcal{L}}
\newcommand{\fD}{\mathfrak{D}}
\newcommand{\otimesL}{\otimes^{\mathbb{L}}}
\newcommand{\derR}{\mathsf{R}}
\newcommand{\RG}{\derR\Gamma}
\newcommand{\RHom}{\derR\Hom}
\newcommand{\ol}[1]{\overline{#1}}
\newcommand{\ul}[1]{\underline{#1}}
\newcommand{\iDet}{\mathrm{d}}
\newcommand{\lra}[1]{\langle #1 \rangle}
\newcommand{\parenth}[1]{\left( #1 \right)}
\newcommand{\dSel}[2]{\mathfrak{X}_{#2}^{#1}}
\DeclareMathOperator{\Gal}{Gal}
\DeclareMathOperator{\Ker}{Ker}
\DeclareMathOperator{\Coker}{Coker}
\DeclareMathOperator{\pd}{pd}
\DeclareMathOperator{\Fitt}{Fitt}
\DeclareMathOperator{\Det}{Det}
\DeclareMathOperator{\rank}{rank}
\DeclareMathOperator{\Hom}{Hom}
\DeclareMathOperator{\nt}{nt}
\DeclareMathOperator{\ord}{ord}
\DeclareMathOperator{\Sel}{Sel}
\DeclareMathOperator{\ur}{ur}
\DeclareMathOperator{\tor}{tor}
\DeclareMathOperator{\ram}{ram}
\DeclareMathOperator{\an}{an}
\DeclareMathOperator{\Iw}{Iw}
\DeclareMathOperator{\Ext}{Ext}
\DeclareMathOperator{\length}{length}
\DeclareMathOperator{\PN}{PN}
\DeclareMathOperator{\Tr}{Tr}
\DeclareMathOperator{\Loc}{Loc}
\DeclareMathOperator{\rel}{rel}
\DeclareMathOperator{\ssr}{ss}
\let\oldenumerate\enumerate
\renewcommand{\enumerate}{
   \oldenumerate
   \setlength{\itemsep}{1pt}
   \setlength{\parskip}{0pt}
   \setlength{\parsep}{0pt}
}
\let\olditemize\itemize
\renewcommand{\itemize}{
   \olditemize
   \setlength{\itemsep}{1pt}
   \setlength{\parskip}{0pt}
   \setlength{\parsep}{0pt}
}
\theoremstyle{plain}
\newtheorem{thm}{Theorem}[section]
\newtheorem{lem}[thm]{Lemma}
\newtheorem{prop}[thm]{Proposition}
\newtheorem{cor}[thm]{Corollary}
\newtheorem{ass}[thm]{Assumption}
\theoremstyle{definition}
\newtheorem{defn}[thm]{Definition}
\newtheorem{rem}[thm]{Remark}
\title[Higher codimension Iwasawa theory]
{Higher codimension Iwasawa theory for elliptic curves with supersingular reduction}
\author[T. Kataoka]{Takenori Kataoka}
\address{Faculty of Science and Technology, Keio University.
3-14-1 Hiyoshi, Kohoku-ku, Yokohama, Kanagawa 223-8522, Japan}
\email{tkataoka@math.keio.ac.jp}
\keywords{Iwasawa theory, elliptic curves, Selmer groups, algebraic $p$-adic $L$-functions}
\subjclass[2010]{11R23}
\date{}
\begin{document}

\begin{abstract}
Bleher et al.~began studying higher codimension Iwasawa theory for classical Iwasawa modules.
Subsequently, Lei and Palvannan studied an analogue for elliptic curves with supersingular reduction.
In this paper, we obtain a vast generalization of the work of Lei and Palvannan.
A key technique is an approach to the work of Bleher et al.~that the author previously proposed.
For this purpose, we also study the structure of $\pm$-norm subgroups and duality properties of multiply-signed Selmer groups.
\end{abstract}

\maketitle

\section{Introduction}\label{sec:01}

In Iwasawa theory, we study various Iwasawa modules, or Selmer groups, associated to various arithmetic objects.
In \cite{BCG+}, Bleher et al.~began studying {\it higher codimension} behavior of unramified Iwasawa modules which are assumed to be pseudo-null.
The pseudo-nullity of the unramified Iwasawa modules is known as Greenberg's conjecture.
In \cite{BCG+} they mainly deal with rank one cases in a sense, 
and subsequently in \cite{BCG+b} they extend the study to higher rank cases, where the unramified Iwasawa modules concerned are replaced by certain modules defined via exterior powers.

In \cite{Kata_12}, the author proposed a new approach to the theory of \cite{BCG+} and \cite{BCG+b}.
The new approach enables us to deal with equivariant situations and, moreover, to avoid localizing at prime ideals of height $2$.

On the other hand, Lei and Palvannan \cite{LP19} developed an analogue of the work \cite{BCG+}, concerning Selmer groups of elliptic curves.
This is partly motivated by a conjecture on the pseudo-nullity of the fine Selmer groups, which was predicted by Coates and Sujatha \cite[Conjecture B]{CS05} as an analogue of Greenberg's conjecture.
More concretely, given an elliptic curve with supersingular reduction over an imaginary quadratic field in which $p$ splits, Lei and Palvannan studied the doubly-signed Selmer groups.
Those Selmer groups were introduced by B.~D.~Kim \cite{Kim14} after the $\pm$-theory developed by Kobayashi \cite{Kob03}.

In the present paper, by applying the approach of \cite{Kata_12}, we develop higher codimension Iwasawa theory for elliptic curves with supersingular reduction.
This work generalizes the results of \cite{LP19} in the following aspects:
\begin{itemize}
\item
We work over a general base number field, whereas in \cite{LP19} the base field is an imaginary quadratic field.
\item
We deal with an arbitrary (even equivariant) abelian $p$-adic Lie extension of the base field (containing the cyclotomic $\Z_p$-extension), whereas \cite{LP19} deals with the unique $\Z_p^2$-extension of the imaginary quadratic field.
\item
We do not have to localize at prime ideals, whereas \cite{LP19} studies only behavior after localization at prime ideals of height $2$.
\item
We deal with arbitrary ranks, whereas \cite{LP19} deals with rank one cases only.
Here, the rank means the integer $l \geq 1$ in \S \ref{subsec:main1} below.
To do this, we make use of exterior powers, following the idea of \cite{BCG+b}.
\end{itemize}

Note that \cite{LP19} also deals with another kind of Selmer groups (defined by Greenberg's Panchishkin condition), but we do not study them in this paper.

The basic idea of this paper is the same as in \cite{Kata_12}.
However, we need a couple of extra ingredients which are specific to elliptic curves with supersingular reduction.
One is precise descriptions of the $\pm$-norm subgroups.
This will be studied in \S \ref{sec:27}, which relies on previous work \cite{Kata_08} of the author.
The results are of independent interest and are so precise that we can recover several previous results (see Remark \ref{rem:LL}).
Another is the behavior of Selmer groups under duality that we state as Theorem \ref{thm:E1}.
The result can be regarded as an extension of algebraic functional equations for multiply-signed Selmer groups, which is also of independent interest (see Remark \ref{rem:alg_FE}).

After introducing basic notations in \S \ref{subsec:notation}, we state the main results in \S \S \ref{subsec:main1} and \ref{subsec:main2}.

\subsection{Notations}\label{subsec:notation}

Let us fix an odd prime number $p$ and a number field $F$.
Let $E$ be an elliptic curve over $F$ that has good reduction at all $p$-adic primes of $F$.

Let $K_{\infty}/F$ be an abelian $p$-adic Lie extension.
Equivalently, $K_{\infty}/F$ is an abelian extension which is a finite extension of a multiple $\Z_p$-extension.
We suppose that $K_{\infty} \supset \mu_{p^{\infty}}$, where $\mu_{p^n}$ denotes the group of $p^n$-th roots of unity and $\mu_{p^{\infty}} = \bigcup_n \mu_{p^n}$.
The associated Iwasawa algebra is denoted by $\RR = \Z_p[[\Gal(K_{\infty}/F)]]$.
In this introduction we always assume Assumptions \ref{ass:reduction} and \ref{ass:non_anom} introduced in \S \ref{subsec:pm_subgroup}.

We write $S_p^{\ssr}$ for the set of $p$-adic primes of $F$ at which $E$ has supersingular reduction.
We call an element of $\prod_{\pe \in S_p^{\ssr}} \{+, -\}$ a {\it multi-sign}.
More generally, we call an element of $\prod_{\pe \in S_p^{\ssr}} \{0, 1, +, -, \rel\}$ a {\it multi-index} (here, $0$ and $1$ are just symbols and have no relation with the natural numbers).

For each multi-index $\epsilon = (\epsilon_{\pe})_{\pe}$, in \S \ref{subsec:Sel_defn}, we will introduce the $\epsilon$-Selmer group $\Sel^{\epsilon}(E/K_{\infty})$.
It is defined by imposing $\epsilon_{\pe}$-local condition at each $\pe \in S_p^{\ssr}$; $0$ denotes the strict condition, $\rel$ denotes the relaxed condition, and the $\pm$-local condition is essentially introduced by Kobayashi \cite{Kob03}.
The original work of Kobayashi dealt with the case where $F = \Q$, and B.~D.~Kim \cite{Kim14} gave an extension to the case where $F$ is an imaginary quadratic field.
The definition for general $F$ was given by Lei and Lim \cite{LL21}.

We take a finite set $S$ of non-$p$-adic finite primes of $F$ that satisfies a certain condition that we label as \eqref{eq:choiceS}.
Then we also define the $S$-imprimitive $\epsilon$-Selmer group $\Sel^{\epsilon}_{S}(E/K_{\infty})$ by relaxing the local condition at the primes in $S$.
We put 
\[
\dSel{\epsilon}{} = \Sel^{\epsilon}(E/K_{\infty})^{\vee},
\qquad \dSel{\epsilon}{S} = \Sel^{\epsilon}_{S}(E/K_{\infty})^{\vee},
\]
where $(-)^{\vee}$ denotes the Pontryagin dual.
It is known that $\dSel{\epsilon}{}$ and $\dSel{\epsilon}{S}$ are finitely generated $\RR$-modules.
Moreover, we assume Assumption \ref{ass:gen_tor}, which claims that these modules are torsion if $\epsilon$ is a multi-sign.

For each multi-sign $\epsilon$,
in \S \ref{subsec:pL_defn}, we will define an algebraic ($S$-imprimitive) $\epsilon$-$p$-adic $L$-function
\[
\LL_S^{\epsilon} = \LL_{S}^{\epsilon}(E/K_{\infty}) \in \RR
\]
by requiring $\Fitt_{\RR}(\dSel{\epsilon}{S}) = (\LL_{S}^{\epsilon})$, where $\Fitt_{\RR}(-)$ denotes the initial Fitting ideal.
Note that $\LL_S^{\epsilon}$ is defined in an algebraic way, not an analytic way.
Therefore, a main conjecture is expected to be formulated as an equality between $\LL_S^{\epsilon}$ and a certain analytic $p$-adic $L$-function, up to unit (see Remark \ref{rem:41} for the case $F = \Q$).
However, in this paper we do not study such a main conjecture and we only deal with algebraic aspects.

\subsection{The first main result}\label{subsec:main1}

Let us take distinct multi-signs $\epsilon_1, \dots, \epsilon_n \in \prod_{\pe \in S_p^{\ssr}} \{+, -\}$ with $n \geq 2$ (this forces $S_p^{\ssr} \neq \emptyset$).
We then define multi-indices $\ol{\epsilon}$ and $\ul{\epsilon}$ by
\[
(\ol{\epsilon}_{\pe}, \ul{\epsilon}_{\pe}) = 
\begin{cases}
(\rel, 1) & (\text{unless $\epsilon_{1, \pe} = \cdots = \epsilon_{n, \pe}$})\\
(\pm, \pm) & (\text{if $\epsilon_{1, \pe} = \cdots = \epsilon_{n, \pe} = \pm$})
\end{cases}
\] 
for each $\pe \in S_p^{\ssr}$.
Put 
\[
l = \# \{\pe \in S_p^{\ssr} \mid \ol{\epsilon}_{\pe} = \rel \} \geq 1.
\]

For each finite prime $v$ of $F$, we put $K_{\infty, v} = K_{\infty} \otimes_F F_v$.
For $1 \leq i \leq n$, we define
\[
\fD_i 
= \bigoplus_{\pe \in S_p^{\ssr}, \ol{\epsilon}_{\pe} = \rel} \parenth{H^1(K_{\infty, \pe}, E[p^{\infty}]) \over E^{\epsilon_{i, \pe}}(K_{\infty, \pe}) \otimes (\Q_p/\Z_p)}^{\vee},
\]
where $E^{\pm}(K_{\infty, \pe})$ denotes the $\pm$-norm subgroup of $E(K_{\infty, \pe})$ (see \S \ref{subsec:pm_subgroup}) and we use the Kummer map to embed $E^{\pm}(K_{\infty, \pe}) \otimes (\Q_p/\Z_p)$ into $H^1(K_{\infty, \pe}, E[p^{\infty}])$.
By comparing the local conditions between $\dSel{\ul{\epsilon}}{S}$ and $\dSel{\ol{\epsilon}}{S}$, we obtain a natural exact sequence of $\RR$-modules
\begin{equation}\label{eq:DSS}
\bigoplus_{i=1}^n \fD_i
\to \dSel{\ol{\epsilon}}{S}
\to \dSel{\ul{\epsilon}}{S}
\to 0.
\end{equation}
As we will see in \S \ref{subsec:CM_15}, the generic ranks of both $\fD_i$ and $\dSel{\ol{\epsilon}}{S}$ as $\RR$-modules are $l$.
It is of critical importance that $\fD_i$ is {\it free} as an $\RR$-module.
To prove this fact, in \S \ref{sec:27} we study the structure of $E^{\pm}(K_{\infty, \pe})$ closely.

Then \eqref{eq:DSS} implies that the first map to $\dSel{\ol{\epsilon}}{S}$ has information about $\dSel{\ul{\epsilon}}{S}$.
Motivated by the work \cite{BCG+b}, we take the exterior powers and obtain a map
\[
\bigoplus_{i=1}^n \bigwedge_{\RR}^l \fD_i
\to \bigwedge_{\RR}^l \dSel{\ol{\epsilon}}{S}.
\]
The module in Theorem \ref{thm:main} below involving exterior powers denotes the cokernel of this map after taking the quotient of the target module by its torsion part.

In general, for a finitely generated $\RR$-module $M$ and $i \geq 0$, we put $E^i(M) = \Ext_{\RR}^i(M, \RR)$.
We define $M_{\tor}$ (resp.~$M_{\PN}$) as the maximal torsion (resp.~pseudo-null) submodule of $M$ and we put $M_{/\tor} = M/M_{\tor}$ (resp.~$M_{/\PN} = M / M_{\PN}$).

\begin{thm}\label{thm:main}
We have an exact sequence
\begin{align}
0 \to \frac{\left(\bigwedge_{\RR}^l \dSel{\ol{\epsilon}}{S} \right)_{/\tor}}
{\sum_{i=1}^n \bigwedge_{\RR}^l \fD_i}
\to \frac{\RR}{\sum_{i=1}^n(\LL_{S}^{\epsilon_i})}
\to \frac{\RR}{\Fitt_{\RR} (E^1(\dSel{\ol{\epsilon}}{S}) )}
\to 0.
\end{align}
\end{thm}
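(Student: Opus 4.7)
The plan is to derive the theorem as an application of the general commutative-algebra machinery developed in \cite{Kata_12}, taking as input the four-term exact sequence \eqref{eq:DSS} together with structural information about the modules involved. In this way, nothing beyond \eqref{eq:DSS}, the freeness of $\fD_i$, and the defining property of $\LL_S^{\epsilon_i}$ is needed at the stage of proving Theorem~\ref{thm:main} itself; the hard work all sits in the preliminary sections.

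First I would collect the structural facts. From the analysis of $\pm$-norm subgroups in \S\ref{sec:27}, each $\fD_i$ is \emph{free} of rank $l$ over $\RR$; this property is indispensable for taking exterior powers in a controlled way. From \S\ref{subsec:CM_15} one has that $\dSel{\ol{\epsilon}}{S}$ is finitely generated over $\RR$ of generic rank exactly $l$. For each individual index $i$, the same local comparison that produces \eqref{eq:DSS} yields a single-index version
\[
\fD_i \to \dSel{\ol{\epsilon}}{S} \to \dSel{\epsilon_i}{S} \to 0,
\]
so by the definition of $\LL_S^{\epsilon_i}$ one has $\Fitt_{\RR}(\Coker(\fD_i \to \dSel{\ol{\epsilon}}{S})) = \Fitt_{\RR}(\dSel{\epsilon_i}{S}) = (\LL_{S}^{\epsilon_i})$.

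Next I would invoke the key algebraic result from \cite{Kata_12}. In the form required here, it asserts that for any finitely generated $\RR$-module $M$ of generic rank $l$ equipped with $\RR$-homomorphisms $\phi_i \colon F_i \to M$ from free modules $F_i$ of rank $l$ with torsion cokernel, there is a canonical exact sequence
\[
0 \to \frac{(\bigwedge_{\RR}^{l} M)_{/\tor}}{\sum_{i=1}^{n} \bigwedge_{\RR}^{l} \phi_i(F_i)} \to \frac{\RR}{\sum_{i=1}^{n} \Fitt_{\RR}(\Coker \phi_i)} \to \frac{\RR}{\Fitt_{\RR}(E^1(M))} \to 0.
\]
Specializing $M = \dSel{\ol{\epsilon}}{S}$, $F_i = \fD_i$, $\phi_i$ the map in \eqref{eq:DSS}, and substituting $\Fitt_{\RR}(\Coker \phi_i) = (\LL_{S}^{\epsilon_i})$, one obtains the claimed exact sequence verbatim.

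The principal obstacle is thus to secure the hypotheses rather than to assemble the conclusion. The delicate point is the freeness of $\fD_i$: this is a genuinely supersingular phenomenon reflecting the precise structure of $E^{\pm}(K_{\infty,\pe})$, and it is exactly what \S\ref{sec:27} (building on \cite{Kata_08}) is designed to establish. The generic-rank calculation in \S\ref{subsec:CM_15} is a second nontrivial input, requiring control theorems and a local Euler characteristic computation. Once both of these are in hand, Theorem~\ref{thm:main} reduces to a purely formal application of \cite{Kata_12} to \eqref{eq:DSS}, with no further input specific to elliptic curves.
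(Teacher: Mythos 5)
Your strategy --- feeding \eqref{eq:DSS} into the exterior-power/determinant machinery of \cite{Kata_12} --- is indeed the paper's strategy, and your identification of $\Fitt_{\RR}(\Coker(\fD_i\to\dSel{\ol{\epsilon}}{S}))=(\LL_S^{\epsilon_i})$ is the right lever. However, the abstract ``key algebraic result'' you write down is false as stated, and the missing hypothesis is exactly the input you fail to list. For the cokernel to be controlled by $\Fitt_{\RR}(E^1(M))$ alone, one needs $\pd_{\RR}(M)\le 1$, i.e.\ $M$ must be realizable as $H^1$ of a two-term perfect complex $C\in D^{[0,1]}(\RR)$ with $H^0(C)=0$; that is precisely the setting of \cite[Proposition 2.1]{Kata_12} (Proposition \ref{prop:62}), whose map $\Psi_C$ targets the invertible module $\Det_{\RR}^{-1}(C)$, an object that does not exist for general $M$. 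Without this hypothesis the statement breaks: take $\RR=\Z_p[[T]]$, $M=\RR\oplus\RR/(p,T)$, $n=1$, $\phi_1$ the inclusion of the free summand; then the left term is $0$, the middle term is $\RR/(p,T)\neq 0$, but $E^1(M)=0$ so the right term is also $0$. Thus the exact sequence you claim would force $\RR/(p,T)=0$.

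In the paper, the bound $\pd_{\RR}(\dSel{\ol{\epsilon}}{S})\le 1$ is not automatic: it comes from $H^1(C_S^{\ol{\epsilon}})=0$ (Proposition \ref{prop:C_S_epsilon}(2)), which in turn relies on Assumption \ref{ass:gen_tor} (torsionness of the signed Selmer groups) via an Euler characteristic count. This is a third essential input alongside the two you flag (freeness of $\fD_i$ and the generic-rank computation), and in fact the rank computation and the $\pd\le 1$ bound come from the same cohomological package. Relatedly, the paper does not cite a single omnibus result from \cite{Kata_12} of the kind you invoke: it applies \cite[Proposition 2.1]{Kata_12} to $C_S^{\ol{\epsilon}}[1]$, builds $f_2$ from the triangle $\fD_i[-2]\to C_S^{\ol{\epsilon}}\to C_S^{\epsilon_i}$, verifies commutativity of the resulting square (Proposition \ref{prop:CM_36}), and runs the snake lemma. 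Your formulation conceals both the $\pd\le 1$ requirement and the commutativity check. Once you add the hypothesis $\pd_{\RR}(M)\le 1$ (which forces each $\phi_i$ to be injective and $\pd_{\RR}(\Coker\phi_i)\le 1$ automatically), your abstraction does appear to be correct and is a reasonable packaging of Proposition \ref{prop:CM_36} plus the snake lemma --- but as written it is a genuine gap, not a formality.
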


The proof of Theorem \ref{thm:main} will be given in \S \ref{subsec:CM_15}.
The structure of $E^1(\dSel{\ol{\epsilon}}{S})$ will be the theme of Theorem \ref{thm:E1} below.
Actually, by combining Theorems \ref{thm:main} and \ref{thm:E1}, we obtain an analogue of \cite[Theorem 5.3]{Kata_12} for classical Iwasawa modules for CM-fields.
(The statement of the results in this paper is much simpler than that of \cite{Kata_12}; this is essentially because we have $H^0(K_{\infty, \pe}, E[p^{\infty}]) = 0$ for any $p$-adic prime $\pe$ of $F$.)

We also have a refined version of Theorem \ref{thm:main} for the $l = 1$ case.
Note that $l = 1$ is equivalent to that $n = 2$ and the two multi-signs $\epsilon_1$ and $\epsilon_2$ differ at a single component.
By assuming $l = 1$, thanks to \eqref{eq:DSS}, we can immediately reformulate Theorem \ref{thm:main} as claim (1) of the following corollary; claim (2) will be proved in \S \ref{subsec:pf_l=1}.

\begin{cor}\label{cor:l=1}
Let us suppose $l = 1$.

(1)
We have an exact sequence
\begin{align}
\left(\dSel{\ol{\epsilon}}{S} \right)_{\tor}
\to \dSel{\ul{\epsilon}}{S}
\to \frac{\RR}{(\LL_{S}^{\epsilon_1}, \LL_{S}^{\epsilon_2})}
 \to \frac{\RR}{\Fitt_{\RR} \left(E^1(\dSel{\ol{\epsilon}}{S}) \right)}
\to 0.
\end{align}

(2)
Let us moreover suppose that $E^1(\dSel{\ol{\epsilon}}{S})$ is pseudo-null over $\RR$.
Then we have $\left(\dSel{\ol{\epsilon}}{S} \right)_{\tor} = 0$ and an (abstract) isomorphism
\[
\frac{\RR}{\Fitt_{\RR} \left(E^1(\dSel{\ol{\epsilon}}{S}) \right)}
\simeq E^2(E^1(\dSel{\ol{\epsilon}}{S})).
\]
Therefore, we obtain an exact sequence
\[
0 \to \dSel{\ul{\epsilon}}{S}
\to \frac{\RR}{(\LL_{S}^{\epsilon_1}, \LL_{S}^{\epsilon_2})}
\to E^2(E^1(\dSel{\ol{\epsilon}}{S}))
\to 0.
\]
\end{cor}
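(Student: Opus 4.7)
For Part~(1), I would specialise Theorem~\ref{thm:main} to $l=1$ and $n=2$, noting that $\bigwedge_{\RR}^1$ is the identity. Writing $Q:=\dSel{\ol{\epsilon}}{S}$, Theorem~\ref{thm:main} produces the short exact sequence
\[
0\to \frac{Q_{/\tor}}{\fD_1+\fD_2}\to \frac{\RR}{(\LL_S^{\epsilon_1},\LL_S^{\epsilon_2})}\to \frac{\RR}{\Fitt_\RR(E^1(Q))}\to 0.
\]
The sequence~\eqref{eq:DSS} identifies $\dSel{\ul{\epsilon}}{S}$ with $Q/(\fD_1+\fD_2)$, so the leftmost term is canonically the cokernel of the natural map $Q_{\tor}\to\dSel{\ul{\epsilon}}{S}$, and splicing this presentation onto the displayed sequence yields the four-term exact sequence claimed in~(1).

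For Part~(2), assume $E^1(Q)$ is pseudo-null. I would first prove $Q_{\tor}=0$ by combining two long exact sequences of $\Ext_{\RR}^*(-,\RR)$. Since $Q_{/\tor}$ is a rank-one torsion-free module over $\RR$, a finite product of regular local UFDs, its reflexive hull is free: $Q_{/\tor}^{**}\simeq\RR$, with pseudo-null cokernel $C$. The long exact sequence of $0\to Q_{/\tor}\to\RR\to C\to 0$ gives $E^1(Q_{/\tor})\simeq E^2(C)$ (pseudo-null) and $E^2(Q_{/\tor})\simeq E^3(C)$ (supported in codimension $\ge 3$). Applied to $0\to Q_{\tor}\to Q\to Q_{/\tor}\to 0$, the $\Ext$ long exact sequence then exhibits $E^1(Q_{\tor})$ as an extension of a submodule of $E^2(Q_{/\tor})$ by a quotient of $E^1(Q)$, both pseudo-null; hence $E^1(Q_{\tor})$ itself is pseudo-null. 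Biduality for torsion $\RR$-modules (preservation of characteristic ideals under $E^1$) then forces $Q_{\tor}$ to be pseudo-null. To upgrade this to $Q_{\tor}=0$ I would invoke the standard structural input that $\dSel{\ol{\epsilon}}{S}$ admits no nontrivial pseudo-null $\RR$-submodule, a property typical of $S$-imprimitive Selmer groups with sufficiently relaxed local conditions. This upgrade is the main obstacle, and I expect to handle it by a direct analysis of the Selmer complex computing $\dSel{\ol{\epsilon}}{S}$, drawing on the structural information about $E^1(\dSel{\ol{\epsilon}}{S})$ provided by Theorem~\ref{thm:E1}.

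For the abstract isomorphism $\RR/\Fitt_\RR(E^1(Q))\simeq E^2(E^1(Q))$, once $Q_{\tor}=0$ we have $0\to Q\to\RR\to C'\to 0$ with $C':=\RR/Q$ cyclic pseudo-null, and the same $\Ext$ computation gives $E^1(Q)\simeq E^2(C')$ canonically. Writing $C'=\RR/I$ with $I$ of height two and Cohen--Macaulay (the pseudo-nullity of $E^1(Q)$ rules out that $E^2(C')$ has embedded components), the Hilbert--Burch theorem supplies a free resolution of $\RR/I$ whose dual simultaneously identifies $\Fitt_\RR(E^2(\RR/I))$ with $I$ and provides the Koszul-type biduality $E^2(E^2(\RR/I))\simeq\RR/I$. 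Both sides of the desired isomorphism are therefore canonically $\RR/I$. Substituting $Q_{\tor}=0$ and this identification into the four-term sequence of Part~(1) produces the short exact sequence asserted in~(2).
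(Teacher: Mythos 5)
Your Part (1) is correct and is the paper's argument: specialize Theorem \ref{thm:main} to $l=1$, $n=2$, and splice with \eqref{eq:DSS}.

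For Part (2) there is a genuine gap, which you yourself flag as ``the main obstacle'': passing from ``$Q_{\tor}$ is pseudo-null'' to ``$Q_{\tor}=0$.'' The missing input is the structural fact $\pd_{\RR}(\dSel{\ol{\epsilon}}{S}) \leq 1$. The paper obtains this from Proposition \ref{prop:C_S_epsilon}: since $C_S^{\ol{\epsilon}}\in D^{[1,2]}(\RR)$ and $H^1(C_S^{\ol{\epsilon}})=0$, the module $\dSel{\ol{\epsilon}}{S}\simeq H^2(C_S^{\ol{\epsilon}})$ is presented by a two-term complex of projectives with injective differential. With that bound in hand, Proposition \ref{prop:PN_tor} settles Part (2) cleanly: $\pd_{\Lambda}(M)\leq 1$ forces $M_{\PN}=0$, so ``no pseudo-null submodule'' is automatic and $Q_{\tor}=0$ follows at once from pseudo-nullity of $E^1(Q)$; dualizing a length-one free resolution then shows $N=E^1(Q)$ has $\pd_{\RR}(N)\leq 2$, realizes $E^2(N)$ as a cyclic quotient of a free rank-one module, and gives $\Fitt_{\RR}(E^2(N))=\Fitt_{\RR}(N)$. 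Your Hilbert--Burch route also silently presupposes this same bound---you need $\pd_{\RR}(\RR/I)\leq 2$ after embedding $Q$ in its reflexive hull, which is equivalent to $\pd_{\RR}(Q)\leq 1$---so without it both halves of your Part (2) remain incomplete, and with it you would essentially be re-deriving Proposition \ref{prop:PN_tor} by a longer route.
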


Note that the pseudo-nullity of $E^1(\dSel{\ol{\epsilon}}{S})$ is closely related to that of $\dSel{\eta}{}$ in Theorem \ref{thm:E1} below (as long as we assume condition ($\star$) there).
Taking \cite[Conjecture B]{CS05} of Coates and Sujatha into account, we may expect that $\dSel{\eta}{}$ is often pseudo-null.

\subsection{The second main result}\label{subsec:main2}

The following is the second main result of this paper (the proof will be given in \S \ref{sec:pf_E1}).
It gives an alternative description of $E^1(\dSel{\ol{\epsilon}}{S})$ in Theorem \ref{thm:main}; we set $\epsilon$ as $\ol{\epsilon}$ in Theorem \ref{thm:main}.

\begin{thm}\label{thm:E1}
We take a multi-index $\epsilon = (\epsilon_{\pe})_{\pe} \in \prod_{\pe \in S_p^{\ssr}} \{+, -, \rel\}$.
We define a multi-index $\eta = (\eta_{\pe})_{\pe} \in \prod_{\pe \in S_p^{\ssr}} \{0, +, -\}$ by
\[
\eta_{\pe} = 
\begin{cases}
	\epsilon_{\pe} & (\text{if $\epsilon_{\pe} \in \{+, -\}$})\\
	0 & (\text{if $\epsilon_{\pe} = \rel$}).
\end{cases}
\]
We suppose:
\begin{itemize}
\item[$(\star)$]
For each $\pe \in S_p^{\ssr}$ with $\epsilon_{\pe} = +$, the residue degree of $K_{\infty}/F$ at $\pe$ is not divisible by $4$.
\end{itemize}
Then we have an exact sequence
\begin{align}
0 \to (\dSel{\eta}{})^{\iota}
 \to E^1(\dSel{\epsilon}{S})
\to \bigoplus_{v \in S} E(K_{\infty, v})[p^{\infty}]^{\vee, \iota}
\oplus \bigoplus_{v \not\in S_p(F) \cup S} E^2 \parenth{\parenth{E(K_{\infty, v})[p^{\infty}]^{\vee}}_{\PN}}
\to 0.
\end{align}
\end{thm}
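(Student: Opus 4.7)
The strategy is to establish a derived $\RR$-duality between the Selmer complex computing $\dSel{\epsilon}{S}$ and the one computing $\dSel{\eta}{}$, and then read off $E^1$ from the resulting hypercohomology spectral sequence. Following the formalism of Selmer complexes, I would first realize $\dSel{\epsilon}{S}$ as a cohomology group (in degree $2$) of a perfect Selmer complex $C^\bullet_\epsilon$ over $\RR$, constructed as the mapping fibre of the localization map from $\RG(G_{F,\Sigma}, T_pE)$ to $\bigoplus_{v \in \Sigma} \RG(K_{\infty,v}, T_pE)/L_v^\epsilon$; here $L_v^\epsilon$ is the $\epsilon_\pe$-local condition at $\pe \in S_p^{\ssr}$, the relaxed (i.e.\ total) condition at $v \in S$, and the standard unramified condition elsewhere. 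This identification reduces the computation of $E^1(\dSel{\epsilon}{S})$ to the $\RR$-linear dualization of $C^\bullet_\epsilon$.

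The second step is to apply $\RHom_\RR(-, \RR)$ to $C^\bullet_\epsilon$ and use Grothendieck duality combined with local Tate duality. The crucial local input is that, under the local Tate pairing on $H^1(K_{\infty,\pe}, E[p^\infty])$, the $\pm$-norm subgroup $E^{\pm}(K_{\infty,\pe})$ is its own orthogonal complement, so that the $+$- and $-$-conditions are self-dual. The strict condition and the relaxed condition are mutually orthogonal, which explains why $\rel \leftrightarrow 0$ in the formation of $\eta$. At primes $v \in S$, the relaxation introduces a local correction term which by local Tate duality contributes $E(K_{\infty,v})[p^\infty]^{\vee,\iota}$; and at primes $v$ of bad reduction outside $S_p(F) \cup S$, the unramified condition dualizes to a torsion module whose Ext class precisely produces the $E^2$-of-pseudo-null summand. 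The self-duality of $E^+(K_{\infty,\pe})$ is exactly where hypothesis $(\star)$ enters: the structural description of $E^\pm(K_{\infty,\pe})$ developed in \S \ref{sec:27} will show that this self-duality is equivalent to the residue degree at $\pe$ not being divisible by $4$. Putting these ingredients together yields a duality quasi-isomorphism of the form
\[
\RHom_\RR(C^\bullet_\epsilon, \RR)[3]^\iota \simeq C^\bullet_\eta \oplus (\text{local correction complex}),
\]
with $\eta$ as prescribed in the theorem.

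The final step is to extract the asserted four-term exact sequence from the hypercohomology spectral sequence $E_2^{p,q} = E^p(H^q(C^\bullet_\epsilon)) \Rightarrow H^{p+q}(\RHom_\RR(C^\bullet_\epsilon, \RR))$. Thanks to the vanishing $H^0(K_{\infty,\pe}, E[p^\infty]) = 0$ noted in the text and to Assumption \ref{ass:non_anom}, the other cohomology groups of $C^\bullet_\epsilon$ are sufficiently small for the $E^1$-column to degenerate into the claimed short exact sequence. The main obstacle will be the rigorous verification of the self-duality of $E^+(K_{\infty,\pe})$ under the local Tate pairing in the presence of $(\star)$: this is a genuinely new input beyond Kobayashi's original $F = \Q$ setting and relies crucially on the fine structure theorems of \S \ref{sec:27}. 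A secondary, largely technical, obstacle is the clean identification of the bad-reduction correction terms with $E^2$ applied to the pseudo-null part of $E(K_{\infty,v})[p^\infty]^\vee$, which requires careful bookkeeping of local Galois cohomology of torsion modules and of the intervening reflexive-hull contributions.
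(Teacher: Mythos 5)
Your overall strategy matches the paper's: realize $\dSel{\epsilon}{S}$ as $H^2$ of a perfect complex $C_S^{\epsilon}$, apply $\RHom_{\RR}(-,\RR)$ together with local Tate duality and the self-orthogonality of the $\pm$-norm subgroups to relate $(C_S^{\epsilon})^*$ to $C_S^{\eta,\iota}$, and read off $E^1(\dSel{\epsilon}{S})$ as $H^{-1}((C_S^{\epsilon})^*)$. The $\rel \leftrightarrow 0$ exchange and the interpretation of the $S$-relaxation and bad-reduction primes as local correction terms are exactly the right mechanisms.

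However, there is a genuine error in the key structural claim. You assert a quasi-isomorphism $\RHom_{\RR}(C^\bullet_\epsilon,\RR)[3]^\iota \simeq C^\bullet_\eta \oplus (\text{local correction complex})$, i.e.\ a \emph{direct sum} decomposition. This cannot hold: the paper's Proposition \ref{prop:cpx_dual} establishes only a \emph{distinguished triangle}
\[
C_S^{\eta, \iota} \to (C_S^{\epsilon})^*[-3] \to \bigoplus_{v \in S} \RG_{\Iw}(K_{\infty, v}, T_pE)^{\iota} \oplus \bigoplus_{v \in \Sigma_0 \setminus S} E^2(H^2_{\Iw}(K_{\infty, v}, T_pE)) [-2],
\]
and the exact sequence of Theorem \ref{thm:E1} arises from the long exact cohomology sequence of this triangle; it is not split in general, which a direct sum decomposition would force. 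In particular, the passage from $\dSel{\eta}{S}$ to the \emph{primitive} $\dSel{\eta}{}$ (the quotient by the image of $\bigoplus_{v\in S}H^1_{\Iw}(K_{\infty,v},T_pE)^\iota$) is itself part of the same long exact sequence and would be lost in your splitting.

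Two further points of imprecision. First, the role of $(\star)$: the orthogonality $\lra{(D_g^{\pm})^\iota, D_g^{\pm}}_g = 0$ holds \emph{unconditionally} (Proposition \ref{prop:D_inf_orth}), including for $+$ with $4 \mid g$. What fails without $(\star)$ is that $D_g^+$ is then not a direct summand of $H^1_{\Iw}(\Q_{p^g,\infty},T_pE)$, so the induced quotient map $H^1_{\Iw}/D_g^+ \to (D_g^+)^*$ is not an isomorphism (Proposition \ref{prop:dual_isom}); this is the precise point where $(\star)$ is used, not the self-annihilation per se. Second, the hypercohomology spectral sequence you invoke is trivial here: since $C_S^{\epsilon}\in D^{[1,2]}(\RR)$ with $H^1(C_S^{\epsilon})=0$, the complex is already quasi-isomorphic to $\dSel{\epsilon}{S}[-2]$, so $H^{-1}((C_S^{\epsilon})^*)\simeq E^1(\dSel{\epsilon}{S})$ without any nontrivial degeneration argument; the actual work is in the local comparison triangles, not in a spectral sequence.
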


Here, $\bigoplus_{v \not\in S_p(F) \cup S}$ denotes the direct sum for the finite primes $v$ of $F$ with $v \not \in S_p(F) \cup S$ (see Remark \ref{rem:bad} below).
We write $\iota$ for the involution on $\RR$ that inverts every group element.
For an $\RR$-module $M$, we write $M^{\iota}$ for the $\RR$-module whose additive structure is the same as $M$ and the action of $\RR$ is twisted by $\iota$.

Note that, in condition $(\star)$, the residue degree is not finite in general, and in that case we regard it as a supernatural number (cf.~Definition \ref{defn:supernatural}).
Without the assumption $(\star)$, the description of $E^1(\dSel{\epsilon}{S})$ seems to get harder.

\begin{rem}\label{rem:bad}
For $v \not \in S_p(F)$, it is not hard to see $\parenth{E(K_{\infty, v})[p^{\infty}]^{\vee}}_{\PN} = 0$ if $E$ has good reduction at $v$.
Therefore, in the last direct sum of Theorem \ref{thm:E1}, we may restrict the range of $v$ to the primes at which $E$ has bad reduction.
In the situation of \cite{LP19}, the corresponding factors already played a role in \cite[Theorem 1]{LP19}, and the structure was studied in \cite[\S 7.3]{LP19}.
However, only the second Chern classes were computed in that work.
The exact sequence in Theorem \ref{thm:E1} is a novel observation of this paper.
\end{rem}

\begin{rem}\label{rem:alg_FE}
In the main stream of this paper, we apply Theorem \ref{thm:E1} to the case where $\epsilon$ is $\ol{\epsilon}$ in Theorem \ref{thm:main}, which is not a multi-sign but a multi-index.
On the other hand, we can also apply Theorem \ref{thm:E1} to multi-signs $\epsilon$.
Note that then $\eta = \epsilon$ by the definition.
In this case, Theorem \ref{thm:E1} can be regarded as an algebraic functional equation for the multiply-signed Selmer groups.
The result is a refinement of previous work (e.g., Ahmed and Lim \cite[Theorem 3.3]{AL21}).
Actually, the previous work mainly focused on the pseudo-isomorphism classes, and the exact sequence in Theorem \ref{thm:E1} provides us more information.
To recover the previous results (in non-equivariant settings), we only need to consider $S = \emptyset$ and observe that $E^1(-)$ does not change the pseudo-isomorphism classes for finitely generated torsion $\RR$-modules.
Note also that our proof of Theorem \ref{thm:E1} relies on a complex version of duality (Proposition \ref{prop:cpx_dual}), which is even stronger.
\end{rem}

\subsection{Organization of this paper}\label{subsec:outline}

In \S \ref{sec:Sel_p-L}, we give the definitions of Selmer groups and algebraic $p$-adic $L$-functions.
In \S \ref{sec:app}, we illustrate the main results of this paper in special cases; in particular, we explain how to recover a main result of \cite{LP19}.
In \S \ref{sec:27}, we study the structures of $\pm$-norm subgroups.
In \S \S \ref{sec:02} and \ref{sec:CM_14}, we review facts on perfect complexes and then introduce arithmetic complexes whose cohomology groups know the Selmer groups concerned.
In \S \S \ref{sec:pf_main} and \ref{sec:pf_E1}, we prove the first and the second main results, respectively.

\section{Definitions of Selmer groups and algebraic $p$-adic $L$-functions}\label{sec:Sel_p-L}

In this section, we give the definitions of the Selmer groups and the algebraic $p$-adic $L$-functions.
We keep the notations in \S \ref{subsec:notation} and here introduce some more notations.

Let $S_p(F)$ (resp.~$S_{\infty}(F)$) denote the set of $p$-adic primes (resp.~archimedean places) of $F$.
As in \cite[\S 3.1]{Kata_12}, we define $S_{\ram, p}(K_{\infty}/F)$ as the set of finite primes $v$ of $F$ such that $v$ is not lying above $p$ and that the ramification index of $K_{\infty}/F$ at $v$ is divisible by $p$.
For instance, we have $S_{\ram, p}(K_{\infty}/F) = \emptyset$ as long as $\Gal(K_{\infty}/F)$ does not contain an element of order $p$ (this case will be called the non-equivariant case).
The set $S_{\ram, p}(K_{\infty}/F)$ corresponds to the set $\Phi_{K/F}$ in \cite[Theorem 1]{Gree11}, and the necessity of the set in our study is also explained in \cite[Proposition 3.1]{Kata_12}.
We take a finite set $S$ of primes of $F$ such that
\begin{equation}\label{eq:choiceS}
S \supset S_{\ram, p}(K_{\infty}/F), 
\qquad S \cap (S_p(F) \cup S_{\infty}(F)) = \emptyset.
\end{equation}

For each finite prime $v$ of $F$, let $F_v$ be the localization of $F$ at $v$ and we put $K_{\infty, v} = K_{\infty} \otimes_F F_v$.
Note that then $K_{\infty, v}$ is the inductive limit of $K' \otimes_F F_v$, where $K'$ runs over intermediate number field in $K_{\infty}/F$ and so each $K' \otimes_F F_v$ is a finite product of fields.
In general we interpret cohomology group $H^i(K_{\infty, v}, -)$ as the inductive limit of $H^i(K' \otimes_F F_v, -)$.

We define $S_p^{\ord}$ as the set of $p$-adic primes of $F$ at which $E$ has ordinary reduction.
Note that then $S_p(F)$ is the disjoint union of $S_p^{\ssr}$ and $S_p^{\ord}$.
We are mainly interested in the case where $S_p^{\ord} = \emptyset$ and (equivalently) $S_p^{\ssr} = S_p(F)$, but we do not assume this for more generality.

\subsection{The $\pm$-norm subgroups}\label{subsec:pm_subgroup}

As already remarked, the key idea to define the signed Selmer groups is given by Kobayashi \cite{Kob03}, and there are a number of subsequent studies to generalize the idea.
The definition below basically follows \cite[Definition 4.7]{LL21}.

As usual, for each prime $\pe \in S_p(F)$, we put $a_{\pe}(E) = (1 + \# \F_{\pe}) - \# \ttilde{E}(\F_{\pe})$, where $\F_{\pe}$ denotes the residue field of $F$ at $\pe$ and $\ttilde{E}$ denotes the reduction of $E$ at $\pe$.
We also put $\deg(\pe) = [F_{\pe}:\Q_p]$.

In order to use the $\pm$-theory, we need to assume the following.

\begin{ass}\label{ass:reduction}
For each $\pe \in S_p^{\ssr}$, we have $\deg(\pe) = 1$ and $a_{\pe}(E) = 0$.
\end{ass}

We also assume the non-anomalous condition at ordinary primes:

\begin{ass}\label{ass:non_anom}
For each $\pe \in S_p^{\ord}$, we have $E(K_{\infty, \pe})[p] = 0$.
\end{ass}

Let $\pe \in S_p^{\ssr}$.
We define $M^{\pe}$ as the inertia field in the given extension $K_{\infty}/F$ at $\pe$.
Thanks to Assumption \ref{ass:reduction} and $K_{\infty} \supset \mu_{p^{\infty}}$, it is easy to see $K_{\infty} = M^{\pe}(\mu_{p^{\infty}})$ and $\Gal(K_{\infty}/M^{\pe}) \simeq \Z_p^{\times}$.
For each integer $n \geq -1$, we put $M^{\pe}_n = M^{\pe}(\mu_{p^{n+1}})$.

\begin{defn}\label{defn:CM_31}
Let $\pe \in S_p^{\ssr}$.
For each $n \geq -1$ and for each choice of $\pm$, we define
\begin{align}
& E^{\pm}(M^{\pe}_n \otimes F_{\pe})\\
& = \{ x \in E(M^{\pe}_n \otimes F_{\pe}) \mid \Tr_{M^{\pe}_n/M^{\pe}_{n'+1}}(x) \in E(M^{\pe}_{n'} \otimes F_{\pe}), -1 \leq \forall n' < n, (-1)^{n'} = \pm 1\}.
\end{align}
Here, $\Tr_{M^{\pe}_n/M^{\pe}_{n'+1}}$ denotes the trace map from $E(M^{\pe}_n \otimes F_{\pe})$ to $E(M^{\pe}_{n'+1} \otimes F_{\pe})$.
The tensor products are taken over $F$.
We define
\[
E^{\pm}(K_{\infty, \pe}) = \bigcup_{n \geq -1} E^{\pm}(M^{\pe}_n \otimes F_{\pe}).
\]
\end{defn}

\begin{defn}\label{defn:Loc}
Let $\pe \in S_p^{\ssr}$.
For $\bullet \in \{0, 1, +, -, \rel\}$, we define an $\RR$-submodule $\Loc_{\pe}^{\bullet}$ of $H^1(K_{\infty, \pe}, E[p^{\infty}])$ by
\[
\Loc_{\pe}^{\bullet} =
\begin{cases}
	0 & (\text{if $\bullet = 0$})\\
	E(M^{\pe} \otimes F_{\pe}) \otimes (\Q_p / \Z_p) & (\text{if $\bullet = 1$})\\
	E^{\bullet}(K_{\infty, \pe}) \otimes (\Q_p/\Z_p) & (\text{if $\bullet \in \{+, -\}$})\\
	H^1(K_{\infty, \pe}, E[p^{\infty}]) & (\text{if $\bullet = \rel$}).
\end{cases}
\]
Here, when $\bullet \in \{1, +, -\}$, we use the Kummer map to embed $\Loc_{\pe}^{\bullet}$ into $H^1(K_{\infty, \pe}, E[p^{\infty}])$ (it is actually injective).
Note that, by (a semi-local variant of) Proposition \ref{prop:pm_str} below, we actually have
$\Loc_{\pe}^{1} = \Loc_{\pe}^{+} \cap \Loc_{\pe}^{-}$.
This is a motivation for the definition of $\Loc_{\pe}^{1}$ (the symbol $1$ is used in \cite[\S 10.1]{Kob03}).
\end{defn}

\subsection{The Selmer groups}\label{subsec:Sel_defn}

\begin{defn}\label{defn:CM_35}
Let $\epsilon = (\epsilon_{\pe})_{\pe} \in \prod_{\pe \in S_p^{\ssr}} \{0, 1, +, -, \rel\}$ be a multi-index.
We define the $S$-imprimitive $\epsilon$-Selmer group $\Sel^{\epsilon}_{S}(E/K_{\infty})$ as the kernel of the localization homomorphism
\begin{align}
H^1(K_{\infty}, E[p^{\infty}]) 
\to  \bigoplus_{\pe \in S_p^{\ssr}} \frac{H^1(K_{\infty, \pe}, E[p^{\infty}])}{\Loc_{\pe}^{\epsilon_{\pe}}}
& \oplus \bigoplus_{\pe \in S_p^{\ord}} \frac{H^1(K_{\infty, \pe}, E[p^{\infty}])}{E(K_{\infty, \pe}) \otimes (\Q_p/\Z_p)}\\
& \oplus \bigoplus_{v \not \in S_p(F) \cup S} H^1(K_{\infty, v}, E[p^{\infty}]),
\end{align}
where, in the last direct sum, $v$ runs over the finite primes of $F$ not in $S_p(F) \cup S$.
We also define the (primitive) $\epsilon$-Selmer group $\Sel^{\epsilon}(E/K_{\infty})$ as the kernel of the localization homomorphism
\[
\Sel^{\epsilon}_{S}(E/K_{\infty})
\to \bigoplus_{v \in S} H^1(K_{\infty, v}, E[p^{\infty}]).
\]
\end{defn}

As in \S \ref{subsec:notation}, we write $\dSel{\epsilon}{}$ and $\dSel{\epsilon}{S}$ for the Pontryagin duals of $\Sel^{\epsilon}(E/K_{\infty})$ and $\Sel^{\epsilon}_{S}(E/K_{\infty})$, respectively.
As stated in \cite[Conjecture 4.11]{LL21}, it is natural to conjecture the following.

\begin{ass}\label{ass:gen_tor}
For each multi-sign $\epsilon \in \prod_{\pe \in S_p^{\ssr}} \{+, -\}$, the $\RR$-module 
$\dSel{\epsilon}{S}$ is torsion.
\end{ass}

When $F = \Q$, Assumption \ref{ass:gen_tor} is known to be true, thanks to the celebrated work \cite{Kato04} of Kato (see \cite[Theorem 1.2]{Kob03} or \cite[Proposition 2.3]{Kata_08}).
See the last paragraph of \cite[\S 6.2]{LP19} for progress on the case where $F$ is an imaginary quadratic field.

It is convenient to introduce an order on the 5-element set $\{0, 1, +, -, \rel\}$ defined by
\[
0 < 1, 
\qquad 1 < + < \rel, 
\qquad 1 < - < \rel
\]
(there is no order between $+$ and $-$).
We extend this order to the set $\prod_{\pe \in S_p^{\ssr}} \{0, 1, +, -, \rel\}$ by defining $\epsilon' \leq \epsilon$ if and only if $\epsilon'_{\pe} \leq \epsilon_{\pe}$ for every $\pe \in S_p^{\ssr}$.
Then we have $\Sel^{\epsilon'}_{S}(E/K_{\infty}) \subset \Sel^{\epsilon}_{S}(E/K_{\infty})$ if $\epsilon' \leq \epsilon$ since we have the corresponding inclusions concerning the local conditions.
Note also that the definition of $\ol{\epsilon}, \ul{\epsilon}$ in \S \ref{subsec:main1} can be rephrased as
\[
\ol{\epsilon} = \sup \{\epsilon_1, \dots, \epsilon_n\},
\quad \ul{\epsilon} = \inf \{\epsilon_1, \dots, \epsilon_n\}.
\]

\subsection{The $p$-adic $L$-functions}\label{subsec:pL_defn}

Next we define the algebraic $p$-adic $L$-functions.

\begin{defn}\label{defn:CM_37}
Let $\epsilon \in \prod_{\pe \in S_p^{\ssr}} \{+, -\}$ be a multi-sign such that Assumption \ref{ass:gen_tor} holds.
As we will show in Proposition \ref{prop:CM_cohom_L}, we have
\[
\pd_{\RR} \parenth{\dSel{\epsilon}{S}} \leq 1,
\]
where $\pd$ denotes the projective dimension.
(When $F = \Q$, this is nothing but \cite[Theorem 1.1 and Remark 5.9]{Kata_08}.
More generally this is also established by Lim \cite[Theorem 4.8]{Lim21} for the case where $K_{\infty}$ is the cyclotomic $\Z_p$-extension of a number field.)
Note that assumption \eqref{eq:choiceS} is required here.
Then we define the algebraic $\epsilon$-$p$-adic $L$-function $\LL_S^{\epsilon} = \LL_{S}^{\epsilon}(E/K_{\infty}) \in \RR$, up to units, by requiring
\[
\Fitt_{\RR}(\dSel{\epsilon}{S}) = (\LL_{S}^{\epsilon})
\]
as principal ideals of $\RR$.
\end{defn}

\begin{rem}\label{rem:41}
Let us suppose $F = \Q$ and write $\LL_{S}^{\an, \pm}$ for the analytic $p$-adic $L$-function (see \cite[\S 2.2]{Kata_08} for the precise definition).
Then, as in \cite[Equation (1.1)]{Kata_08}, the (equivariant) main conjecture should be formulated as an equality between principal ideals
\[
W^{\pm} (\LL_{S}^{\pm}) = (\LL_{S}^{\an, \pm}),
\]
where $W^{\pm}$ is an explicit auxiliary ideal.
\end{rem}

\section{Applications of the main results}\label{sec:app}

In this section,
we illustrate the main results of this paper in the case where $F$ is either $\Q$ or an imaginary quadratic field.

\subsection{The case where $F = \Q$}\label{subsec:F=Q}

We suppose $F = \Q$, so we consider an abelian extension $K_{\infty}/\Q$ which is a finite extension of $\Q(\mu_{p^{\infty}})$.
Let $E/\Q$ be an elliptic curve which has good supersingular reduction at $p$.
We moreover suppose $a_p(E) = 0$ (this is automatically true if $p \geq 5$ by the Hasse bound).
Since $S_p^{\ssr}$ is a singleton, a multi-index can be simply denoted by an element of $\{0, 1, +, -, \rel\}$.
Note that Assumptions \ref{ass:reduction}, \ref{ass:non_anom}, and \ref{ass:gen_tor} hold automatically.

The unique choice (up to permutation) of distinct multi-signs is $\epsilon_1 = +$ and $\epsilon_2 = -$.
Then we have $l = 1$ and $\ol{\epsilon} = \rel$, $\ul{\epsilon} = 1$.
As a consequence of Corollary \ref{cor:l=1} and Theorem \ref{thm:E1} (note that the condition $(\star)$ trivially holds), we obtain the following.

\begin{thm}\label{thm:F=Q}
The following are true.
\begin{itemize}
\item[(1)]
We have an exact sequence
\begin{align}
\parenth{\dSel{\rel}{S}}_{\tor}
\to \dSel{1}{S}
\to \frac{\RR}{(\LL_{S}^+, \LL_{S}^-)}
\to \frac{\RR}{\Fitt_{\RR} \parenth{E^1(\dSel{\rel}{S})}}
\to 0.
\end{align}
\item[(2)]
If $E^1(\dSel{\rel}{S})$ is pseudo-null over $\RR$ (i.e., is finite),
then we have an exact sequence
\[
0 \to \dSel{1}{S}
\to \frac{\RR}{(\LL_{S}^+, \LL_{S}^-)}
\to E^2(E^1(\dSel{\rel}{S}))
\to 0.
\]
\item[(3)]
We have an exact sequence
\begin{align}
0 \to (\dSel{0}{})^{\iota} 
 \to E^1(\dSel{\rel}{S})
\to \bigoplus_{v \in S} E(K_{\infty, v})[p^{\infty}]^{\vee, \iota}
\oplus \bigoplus_{v \not \in S_p(\Q) \cup S} E^2 \parenth{\parenth{E(K_{\infty, v})[p^{\infty}]^{\vee}}_{\PN}}
\to 0.
\end{align}
\end{itemize}
\end{thm}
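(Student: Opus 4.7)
The plan is to obtain Theorem~\ref{thm:F=Q} as a direct specialization of Corollary~\ref{cor:l=1} and Theorem~\ref{thm:E1} to the case $F = \Q$. First I would record the trivial verifications of hypotheses: since $E/\Q$ has good supersingular reduction at $p$ with $a_p(E) = 0$, Assumption~\ref{ass:reduction} holds, and Assumption~\ref{ass:non_anom} is vacuous because $S_p^{\ord} = \emptyset$; Assumption~\ref{ass:gen_tor} is Kato's theorem, as mentioned right after its statement. With $S_p^{\ssr} = \{p\}$ a singleton, multi-indices collapse to elements of $\{0,1,+,-,\rel\}$, and up to reordering the only way to choose distinct multi-signs is $\epsilon_1 = +,\ \epsilon_2 = -$. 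Using the definitions in \S\ref{subsec:main1}, I would compute $\ol{\epsilon} = \sup\{+,-\} = \rel$ and $\ul{\epsilon} = \inf\{+,-\} = 1$, giving $l = 1$.

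With these substitutions, part~(1) is nothing but Corollary~\ref{cor:l=1}(1) read off verbatim. For part~(2), I would invoke Corollary~\ref{cor:l=1}(2), which requires only the pseudo-nullity of $E^1(\dSel{\rel}{S})$ over $\RR$; the bulk of the work is then absorbed by the earlier corollary.

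For the parenthetical identification \emph{pseudo-null $=$ finite} in part~(2), I would argue as follows. Since $K_{\infty}/\Q$ is a finite extension of $\Q(\mu_{p^{\infty}})$ and $\Gal(\Q(\mu_{p^{\infty}})/\Q) \simeq \mu_{p-1} \times \Z_p$, the Iwasawa algebra $\RR$ is finitely generated as a module over $\Z_p[[T]]$ and therefore has Krull dimension $2$. A finitely generated $\RR$-module is pseudo-null exactly when its support consists only of maximal ideals; such a module is annihilated by some power of a maximal ideal of the Noetherian local (up to the finite component group) ring $\Z_p[[T]]$, hence has finite length and, being a quotient of a finite direct sum of copies of $\Z_p[[T]]/(p,T)^n$, is finite as a set.

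Finally, part~(3) is Theorem~\ref{thm:E1} applied with $\epsilon = \ol{\epsilon} = \rel$. By the recipe in that theorem, $\epsilon_p = \rel$ forces $\eta_p = 0$, so the multi-index $\eta$ is simply $0$; and hypothesis $(\star)$ is vacuous because no component of $\epsilon$ equals $+$, so no residue-degree constraint need be checked. Substitution then yields the stated exact sequence verbatim. The sole minor technicality in this proof is the pseudo-null/finite equivalence used in part~(2); everything else is bookkeeping of the specializations $l=1$, $\ol{\epsilon}=\rel$, $\ul{\epsilon}=1$, $\eta=0$.
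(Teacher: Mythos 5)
Your proposal is correct and matches the paper's proof exactly: Theorem~\ref{thm:F=Q} is obtained by specializing Corollary~\ref{cor:l=1} and Theorem~\ref{thm:E1} to $F=\Q$ with $\epsilon_1=+$, $\epsilon_2=-$, $l=1$, $\ol{\epsilon}=\rel$, $\ul{\epsilon}=1$, $\eta=0$, and condition $(\star)$ holding vacuously since no component of $\ol{\epsilon}$ equals $+$. Your additional justification that pseudo-null implies finite (via the Krull dimension $2$ of $\RR$) is a correct elaboration of a parenthetical remark that the paper leaves implicit.
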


\subsection{The case where $F$ is an imaginary quadratic field}\label{subsec:pf_LP}

We shall deduce a main result of \cite{LP19} from Corollary \ref{cor:l=1} and Theorem \ref{thm:E1}.
Let us suppose that $F$ is an imaginary quadratic field in which $p$ splits into $\pe$ and $\ol{\pe}$.
We also suppose that the dimension of $\Gal(K_{\infty}/F)$ is two.
Equivalently, $K_{\infty}/F$ is an abelian extension which is a finite extension of $\ttilde{F}(\mu_p)$, where $\ttilde{F}$ denotes the unique $\Z_p^2$-extension of $F$.

Let $E/F$ be an elliptic curve which has good supersingular reduction at both $\pe$ and $\ol{\pe}$, and we suppose that $a_{\pe}(E) = a_{\ol{\pe}}(E) = 0$ so that Assumptions \ref{ass:reduction} and \ref{ass:non_anom} hold.
Fixing the order $\pe, \ol{\pe}$, we express a multi-index $\epsilon$ simply by writing $(\epsilon_{\pe}, \epsilon_{\ol{\pe}})$.

Then there are, up to permutation, exactly 4 choices of $\epsilon_1, \epsilon_2$ such that $l = 1$, namely
\begin{equation}\label{eq:choiceE}
(\epsilon_1, \epsilon_2) = ((+, +), (+, -)), ((-, +), (-, -)), ((+, +), (-, +)), ((+, -), (-, -)).
\end{equation}
Moreover, we define $\eta$ as in Theorem \ref{thm:E1} for $\ol{\epsilon}$;
in other words, $\eta$ equals $\ul{\epsilon}$ with $1$ replaced by $0$.
For instance, when $\epsilon_1 = (+, +)$ and $\epsilon_2= (+, -)$, we have $\ol{\epsilon} = (+, \rel)$, $\ul{\epsilon} = (+, 1)$, and $\eta = (+, 0)$.

As a consequence of Corollary \ref{cor:l=1} and Theorem \ref{thm:E1}, we obtain a completely analogous theorem to Theorem \ref{thm:F=Q}.
We omit to state it.
Note that we have to assume the validity of Assumption \ref{ass:gen_tor}, and in addition condition ($\star$) to apply Theorem \ref{thm:E1}.

Our theorem recovers \cite[Theorem 1]{LP19} (except for the cases involving $\theta^{\textrm{Gr}}_{4, 2}$) in the following way.
We consider $K_{\infty} = \ttilde{F}(\mu_p)$.
Then $S_{\ram, p} = \emptyset$, so $S = \emptyset$ satisfies \eqref{eq:choiceS}.
We write $\LL^{\epsilon} = \LL_{\emptyset}^{\epsilon}$ for each multi-sign $\epsilon$.
Note that $\RR$ is a finite product of regular local rings.
For a pseudo-null $\RR$-module $M$, as in \cite[\S 1.1]{BCG+}, we define the second Chern class $c_2(M)$ by a formal sum
\[
c_2(M) = \sum_{\qu} \length_{\RR_{\qu}}(M_{\qu}) [\qu],
\]
where $\qu$ runs over the prime ideals of $\RR$ of height $2$.

\begin{cor}[{\cite[Theorem 1]{LP19}}]\label{cor:LP}
Let $(\epsilon_1, \epsilon_2)$ be one of \eqref{eq:choiceE}.
Let $K_{\infty} = \ttilde{F}(\mu_p)$ and suppose Assumption \ref{ass:gen_tor} for $\epsilon_1, \epsilon_2$.
We moreover suppose that the elements $\LL^{\epsilon_1}$ and $\LL^{\epsilon_2}$ of $\RR$ are coprime to each other.
Then we have
\begin{align}
c_2 \parenth{\dSel{\ul{\epsilon}}{}}
+ c_2 \parenth{(\dSel{\eta}{})^{\iota}}
+ \sum_{v \not \in S_p(F)} c_2 \parenth{ \parenth{E(K_{\infty, v})[p^{\infty}]^{\vee}}_{\PN}}
= c_2 \parenth{\frac{\RR}{(\LL^{\epsilon_1}, \LL^{\epsilon_2})}}.
\end{align}
\end{cor}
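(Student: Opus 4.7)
The plan is to combine Corollary \ref{cor:l=1} with Theorem \ref{thm:E1} and translate the resulting short exact sequences of pseudo-null modules into the displayed identity of second Chern classes.

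First, I check that $S = \emptyset$ satisfies \eqref{eq:choiceS}. The extension $\ttilde F/F$ is a $\Z_p^2$-extension ramified only above $p$, and $F(\mu_p)/F$ is tamely ramified only above $p$, so $K_{\infty}/F$ is unramified outside $p$ and hence $S_{\ram,p}(K_{\infty}/F) = \emptyset$. Since $\RR$ is a finite product of regular local rings of Krull dimension $3$ and $\LL^{\epsilon_1}, \LL^{\epsilon_2}$ are coprime (hence form a regular sequence in each factor), $\RR/(\LL^{\epsilon_1},\LL^{\epsilon_2})$ is pseudo-null.

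Now I apply Corollary \ref{cor:l=1}(1) with $S = \emptyset$. The third term $\RR/(\LL^{\epsilon_1},\LL^{\epsilon_2})$ is pseudo-null, hence so is its quotient $\RR/\Fitt_{\RR}(E^1(\dSel{\ol{\epsilon}}{}))$; since $\Fitt$ and $\Ann$ have the same radical, this forces $E^1(\dSel{\ol{\epsilon}}{})$ itself to be pseudo-null. This is precisely the hypothesis of Corollary \ref{cor:l=1}(2), which yields the short exact sequence
\[
0 \to \dSel{\ul{\epsilon}}{} \to \RR/(\LL^{\epsilon_1},\LL^{\epsilon_2}) \to E^2(E^1(\dSel{\ol{\epsilon}}{})) \to 0
\]
of pseudo-null modules. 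Applying Theorem \ref{thm:E1} to $\epsilon = \ol{\epsilon}$ with $S = \emptyset$ (using the assumed hypothesis $(\star)$) gives a second short exact sequence
\[
0 \to (\dSel{\eta}{})^{\iota} \to E^1(\dSel{\ol{\epsilon}}{}) \to \bigoplus_{v \notin S_p(F)} E^2\parenth{\parenth{E(K_{\infty,v})[p^{\infty}]^{\vee}}_{\PN}} \to 0,
\]
again of pseudo-null modules.

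The second Chern class is additive on short exact sequences of pseudo-null modules, since localization at any height-$2$ prime $\qu$ preserves exactness and length over $\RR_{\qu}$ is additive. It remains to use the identity $c_2(E^2(N)) = c_2(N)$ for every pseudo-null $\RR$-module $N$: localizing at $\qu$ of height $2$, $\RR_{\qu}$ is a two-dimensional regular local ring and $N_{\qu}$ has finite length, so local Matlis duality gives $\length_{\RR_{\qu}}(\Ext^2_{\RR_{\qu}}(N_{\qu},\RR_{\qu})) = \length_{\RR_{\qu}}(N_{\qu})$. Applying this to $N = E^1(\dSel{\ol{\epsilon}}{})$ and to each $\parenth{E(K_{\infty,v})[p^{\infty}]^{\vee}}_{\PN}$, and adding the two Chern-class identities obtained from the two short exact sequences above, yields the formula in Corollary \ref{cor:LP}. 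The only step that is not completely formal is the pseudo-nullity of $E^1(\dSel{\ol{\epsilon}}{})$ deduced from the coprimality hypothesis, which was the pivot for invoking Corollary \ref{cor:l=1}(2); everything else is bookkeeping with second Chern classes.
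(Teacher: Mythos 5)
Your proposal is correct and follows essentially the same route as the paper: use Corollary \ref{cor:l=1} and Theorem \ref{thm:E1} with $S=\emptyset$ to produce two short exact sequences of pseudo-null modules, then apply additivity of $c_2$ together with $c_2(E^2(N))=c_2(N)$. Your explicit justification of the last identity via Matlis duality over the two-dimensional regular local ring $\RR_\qu$, and of the pseudo-nullity of $E^1(\dSel{\ol{\epsilon}}{})$ via the radical of the Fitting ideal, are both valid and fill in details the paper leaves to a citation.

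One small inaccuracy: you describe condition $(\star)$ as an ``assumed hypothesis,'' but it does not appear among the hypotheses of Corollary \ref{cor:LP}. You need to \emph{verify} that $(\star)$ holds automatically when $K_\infty = \ttilde F(\mu_p)$: for $\pe \in S_p^{\ssr}$, the residue extension of $K_\infty/F$ at $\pe$ is a $\Z_p$-extension (coming from the anticyclotomic direction; $\mu_p$ is totally ramified at $\pe$), so its residue degree is the supernatural number $p^\infty$, which is certainly not divisible by $4$ since $p$ is odd. Once this is noted, the argument is complete.
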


\begin{proof}
By the assumption and Corollary \ref{cor:l=1}(1), the module $E^1(\dSel{+, \rel}{})$ is pseudo-null.
Moreover, condition ($\star$) holds as $K_{\infty} = \ttilde{F}(\mu_p)$.
Therefore, we can apply Corollary \ref{cor:l=1}(2) and Theorem \ref{thm:E1}.
Then we only have to use the additivity of $c_2(-)$ with respect to exact sequences, together with the fact \cite[Remark 5.10]{BCG+b} that $c_2(E^2(M)) = c_2(M)$ for each pseudo-null module $M$.
\end{proof}

\section{Structures of the local conditions}\label{sec:27}

In this section, we study the local conditions for elliptic curves.
In \S \S \ref{subsec:local_settings}--\ref{subsec:conseq_str}, we study the $\pm$-norm subgroups for supersingular elliptic curves over $\Q_p$.
The results are very close to \cite[Theorem 1.2(3)]{Kata_08}, and actually the basic strategy is the same.
However, we have to generalize the situation from {\it finite} unramified extensions of $\Q_p$ to {\it infinite} unramified extensions of $\Q_p$.
We will accomplish the task by taking the limit suitably.
See Remark \ref{rem:LL} for a relation with other previous work (Lei and Lim \cite{LL21} and Lim \cite{Lim21}).
Finally in \S \ref{subsec:ord_local}, we briefly observe ordinary analogues.

\subsection{The local situation}\label{subsec:local_settings}

It is convenient to introduce the following formal terminology.

\begin{defn}\label{defn:supernatural}
Let $\bN = \{0, 1, 2, \dots\}$ be the set of nonnegative integers.
We put $\bN^* = \bN \cup \{\infty\}$.
An element of $\prod_{l} \bN^*$, where $l$ runs over all prime numbers, is called a {\it supernatural number}.
If $g = (g_l)_l$ is a supernatural number, we also write $g_l = \ord_{l}(g) \in \bN^*$ and we express $g$ as a formal product $g = \prod_l l^{\ord_l(g)}$.
This notion is an extension of positive integers; a positive integer $f$ can be decomposed uniquely as $f = \prod_{l} l^{\ord_l(f)}$, where $\ord_l(f) \in \bN$ is the normalized additive valuation of $f$ at $l$.
For supernatural numbers $g$ and $g'$, we write $g \mid g'$ if $\ord_l(g) \leq \ord_l(g')$ holds for all prime numbers $l$.
\end{defn}

In this section, we basically write $f$ for a positive integer and $g$ for a supernatural number.
Note that, for the applications to global settings, we only need to consider supernatural numbers $g$ of the form $g = f$ or $g = f p^{\infty}$ with positive integers $f$.

For each positive integer $f$, let $\F_{p^f}$ be the finite field with $p^f$ elements (in a fixed algebraic closure $\ol{\F_p}$ of $\F_p$), and $\Q_{p^f}$ the unramified extension of $\Q_p$ of degree $f$ (in a fixed maximal unramified extension $\Q_p^{\ur}$ of $\Q_p$).
For each supernatural number $g$, we put 
\[
\F_{p^g} = \bigcup_{f \mid g} \F_{p^f}, 
\qquad \Q_{p^g} = \bigcup_{f \mid g} \Q_{p^f},
\]
 where $f$ runs over the positive integers with $f \mid g$.
Then the correspondence $g \mapsto \F_{p^g}$ (resp.~$g \mapsto \Q_{p^g}$) is a bijection between the set of supernatural numbers and the set of intermediate fields of $\ol{\F_p}/\F_p$ (resp.~of $\Q_p^{\ur}/\Q_p$).
We write $\varphi \in \Gal(\Q_p^{\ur}/\Q_p)$ for the arithmetic Frobenius.

Let $g$ be a supernatural number.
For an integer $n \geq -1$, we put $\Q_{p^g, n} = \Q_{p^g}(\mu_{p^{n+1}})$ and $R_{g, n} = \Z_p[[\Gal(\Q_{p^g, n}/\Q_p)]]$.
We also put $\Q_{p^g, \infty} = \Q_{p^g}(\mu_{p^{\infty}})$ and $\RR_{g} = \Z_p[[\Gal(\Q_{p^g, \infty}/\Q_p)]]$.
Since $\Gal(\Q_{p^g, 0}/\Q_{p^g, -1}) \simeq \Gal(\Q_p(\mu_p)/\Q_p)$ is of order $p-1$, we can decompose the algebra $R_{g, 0}$ with respect to the characters of that Galois group.
Noting that the trivial character component is isomorphic to $R_{g, -1}$, we write $R_{g, 0}^{\nt}$ for the direct product of the non-trivial character components, so we have a natural decomposition as an algebra
\[
R_{g, 0} \simeq R_{g, -1} \times R_{g, 0}^{\nt}.
\]

\subsection{Structures of $\pm$-norm subgroups}\label{subsec:local_state}

Let $E$ be a supersingular elliptic curve over $\Q_p$ satisfying $a_p(E) = (1 + p) - \# \ttilde{E}(\F_p) = 0$.

We first define the $\pm$-norm subgroups in the current local situation.

\begin{defn}\label{defn:76}
Let $g$ be a supernatural number.
For each $n \geq -1$ and a choice of $\pm$, we define
\[
E^{\pm}(\Q_{p^g, n}) 
= \{ x \in E(\Q_{p^g, n}) \mid \Tr_{\Q_{p^g, n}/\Q_{p^g, n'+1}}(x) \in E(\Q_{p^g, n'}), -1 \leq \forall n' < n, (-1)^{n'} = \pm 1\}.
\]
We also define
\[
E^{\pm}(\Q_{p^g, \infty}) = \bigcup_{n \geq -1} E^{\pm}(\Q_{p^g, n}). 
\]
\end{defn}

Note that we have $E^{\pm}(\Q_{p^g, n}) = \bigcup_{f \mid g} E^{\pm}(\Q_{p^f, n})$, where $f$ runs over all positive integers with $f \mid g$.

For a positive integer $f$, we define $E(\Q_{p^f, n})_p$ as the $p$-adic completion of $E(\Q_{p^f, n})$, which we regard as a submodule of $E(\Q_{p^f, n})$ of finite index (the index is prime to $p$).
We also define $E^{\pm}(\Q_{p^f, n})_p$ similarly.
Then, for each supernatural number $g$, we define $E(\Q_{p^g, n})_p$ and $E^{\pm}(\Q_{p^g, n})_p$ as the union of $E(\Q_{p^f, n})_p$ and $E^{\pm}(\Q_{p^f, n})_p$ for $f \mid g$, respectively.

\begin{prop}\label{prop:pm_str}
Let $g$ be a supernatural number.
Let $n$ be an integer $\geq -1$ or $n = \infty$.
Then we have 
\[
E^+(\Q_{p^g, n})_p \cap E^-(\Q_{p^g, n})_p = E(\Q_{p^g, -1})_p,
\qquad E^+(\Q_{p^g, n})_p + E^-(\Q_{p^g, n})_p = E(\Q_{p^g, n})_p.
\]
\end{prop}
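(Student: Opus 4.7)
The plan is to establish the proposition by reducing it, via direct limits, to the case where $g = f$ is a positive integer and $n$ is finite, which is essentially the content of the author's previous work \cite{Kata_08}.

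For the finite case, when $g = f$ is a positive integer and $n \geq -1$ is finite, the $\pm$-norm subgroups of Definition \ref{defn:76} coincide with those studied in \cite{Kata_08}, where a short exact sequence
\begin{equation*}
0 \to E(\Q_{p^f, -1})_p \to E^+(\Q_{p^f, n})_p \oplus E^-(\Q_{p^f, n})_p \to E(\Q_{p^f, n})_p \to 0
\end{equation*}
is established, with the first map the diagonal and the second the difference. This single sequence encodes both the intersection identity $E^+ \cap E^- = E(\Q_{p^f, -1})_p$ (exactness at the middle term) and the sum identity $E^+ + E^- = E(\Q_{p^f, n})_p$ (surjectivity of the second map).

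To pass to an arbitrary supernatural $g$ while keeping $n$ finite, I would invoke the filtered colimit presentations $E(\Q_{p^g, n})_p = \varinjlim_{f \mid g} E(\Q_{p^f, n})_p$ and $E^{\pm}(\Q_{p^g, n})_p = \varinjlim_{f \mid g} E^{\pm}(\Q_{p^f, n})_p$, with $f$ ranging over positive integer divisors of $g$. This index set is filtered, since any two such $f, f'$ are dominated by $\mathrm{lcm}(f, f')$, which still divides $g$. Because filtered colimits of abelian groups are exact, they commute with finite intersections and with sums of submodules of a fixed ambient group, so both identities for $g$ follow from those for each $f \mid g$. Finally, the case $n = \infty$ is handled by a further filtered union along $\{-1, 0, 1, 2, \dots\}$, using the definition $E^{\pm}(\Q_{p^g, \infty})_p = \bigcup_n E^{\pm}(\Q_{p^g, n})_p$ (and the analogous identity for $E(\Q_{p^g, \infty})_p$).

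The main obstacle is the first step. Although the claim is morally present in \cite[Theorem 1.2(3)]{Kata_08}, one has to match conventions carefully: the exact indexing of the trace conditions in Definition \ref{defn:76}, the inclusion of the boundary value $n = -1$, and the fact that the base field here is $\Q_{p^f}$ rather than $\Q_p$ as in Kobayashi's original setting \cite{Kob03}. Once the short exact sequence above is in hand uniformly in $f$, the filtered-colimit bookkeeping of the preceding paragraph is formal and completes the proof.
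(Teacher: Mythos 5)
Your proposal matches the paper's own (very short) proof: establish the identities for $g = f$ a positive integer, then pass to arbitrary supernatural $g$ (and to $n = \infty$) by a filtered union over $f \mid g$, exactly as you outline. The one divergence is the reference for the finite case — the paper cites \cite[Proposition 3.16]{KO18} directly, whereas you point to \cite[Theorem 1.2(3)]{Kata_08}, which concerns the module structure of the $\pm$-subgroups (compare Proposition \ref{prop:83} here) rather than the intersection/sum statement itself, though the two are of course tightly linked and your hedge (``morally present'') is appropriate.
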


\begin{proof}
When $g = f$ is a positive integer, this is known (e.g., \cite[Proposition 3.16]{KO18}).
Note that the proof makes use of a family of local points that we review in Proposition \ref{prop:80} below.
Then the general case follows from taking the inductive limit with respect to $f \mid g$.
\end{proof}

The goal of this subsection is to prove the following (cf.~\cite[Theorem 1.2(3)]{Kata_08}).

\begin{prop}\label{prop:83}
Let $g$ be a supernatural number.
\begin{itemize}
\item[(1)]
We have exact sequences of $\RR_g$-modules
\[
0 \to (E^+(\Q_{p^g, \infty}) \otimes (\Q_p/\Z_p))^{\vee} 
\to \RR_{g} \oplus R_{g, -1} \overset{\ast}{\to} R_{g, -1} \to 0,
\]
where the first component of $\ast$ is the natural projection and the second is given by $\varphi + \varphi^{-1}$,
and
\[
0 \to (E^-(\Q_{p^g, \infty}) \otimes (\Q_p/\Z_p))^{\vee} 
\to \RR_{g} \overset{\ast}{\to} R_{g, 0}^{\nt} \to 0,
\]
where $\ast$ is the natural projection.
\item[(2)]
In particular, the $\RR_g$-module $(E^{\pm}(\Q_{p^g, \infty}) \otimes (\Q_p/\Z_p))^{\vee}$ is generically of rank one and satisfies
\[
\pd_{\RR_g}((E^{\pm}(\Q_{p^g, \infty}) \otimes (\Q_p/\Z_p))^{\vee}) \leq 1.
\]
Moreover, $(E^-(\Q_{p^g, \infty}) \otimes (\Q_p/\Z_p))^{\vee}$ is always free of rank one over $\RR_g$, while so is $(E^+(\Q_{p^g, \infty}) \otimes (\Q_p/\Z_p))^{\vee}$ if and only if $4 \nmid g$ (i.e.~$\ord_2(g) \in \{0, 1\}$).
\end{itemize}
\end{prop}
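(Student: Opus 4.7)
The strategy is to reduce to the finite-$f$ case established in \cite[Theorem 1.2(3)]{Kata_08} and then pass to the supernatural limit. The finite case rests on Kobayashi's norm-compatible family of local points, which will be recorded as Proposition \ref{prop:80}. To promote it to arbitrary $g$, I would use the directed union $\Q_{p^g,\infty} = \bigcup_{f \mid g} \Q_{p^f, \infty}$ to identify
\[
E^{\pm}(\Q_{p^g,\infty}) \otimes (\Q_p/\Z_p) = \varinjlim_{f \mid g} \bigl(E^{\pm}(\Q_{p^f,\infty}) \otimes (\Q_p/\Z_p)\bigr).
\]
Pontryagin duality turns this into a projective limit, and likewise $\RR_g = \varprojlim_{f \mid g} \RR_f$, $R_{g,-1} = \varprojlim_{f \mid g} R_{f,-1}$, and $R_{g,0}^{\nt} = \varprojlim_{f \mid g} R_{f,0}^{\nt}$. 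Since all transition maps in the finite-case exact sequences are surjections of group algebras, the Mittag--Leffler condition is automatic and the inverse limit of exact sequences remains exact. This yields part (1).

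For part (2), the generic rank-one assertion is immediate from the exact sequences. To bound the projective dimension I would decompose $\RR_g \simeq A \times B$ via the prime-to-$p$ idempotents of $\Z_p[\mu_{p-1}]$, where $A = R_{g,-1}[[\Gamma]]$ with $\Gamma \simeq 1 + p\Z_p$ and $B$ is the non-trivial $\mu_{p-1}$-character part. Under this decomposition, $R_{g,-1}$ admits the length-one $\RR_g$-resolution
\[
0 \to \RR_g \xrightarrow{\cdot\,((\gamma-1),\,1)} \RR_g \to R_{g,-1} \to 0,
\]
since $\gamma - 1$ is regular in $A$. Feeding this into the sequences of part (1) gives $\pd_{\RR_g} (E^{\pm})^{\vee} \leq 1$.

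Freeness of $(E^-)^{\vee}$ emerges from the same decomposition: the surjection $\RR_g \to R_{g,0}^{\nt}$ is zero on $A$ and is the $\Gamma$-augmentation on $B$, so its kernel $A \oplus (\gamma - 1)B$ is generated by $(1, \gamma - 1) \in \RR_g$ with trivial annihilator, hence free of rank one. For $(E^+)^{\vee}$ the question boils down to whether $\varphi + \varphi^{-1}$ is a unit in $R_{g,-1}$: if so, the assignment $a \mapsto (a,\,-(\varphi+\varphi^{-1})^{-1}\bar a)$ identifies the kernel of $(a,b) \mapsto \bar a + (\varphi+\varphi^{-1})b$ with $\RR_g$, proving freeness. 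The main obstacle is the non-unit case. Character theory for $\Gal(\Q_{p^g}/\Q_p)$ shows that $\varphi + \varphi^{-1}$ is a unit iff no finite-order character sends $\varphi$ to a primitive fourth root of unity, which is exactly $4 \nmid g$. When $4 \mid g$, localizing at a maximal ideal of $\RR_g$ attached to a character of $\Gal(\Q_{p^g}/\Q_p)$ of order $4$ yields an extra generator for $(E^+)^{\vee}$, ruling out freeness of rank one.
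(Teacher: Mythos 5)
Your overall strategy—establish the finite-$f$ case and then pass to the limit over $f\mid g$—is exactly the paper's, but there is a genuine gap in how you pass to the limit. The exact sequences in the finite case are \emph{not canonical}: the identification of $(E^{\pm}(\Q_{p^f,\infty})\otimes(\Q_p/\Z_p))^{\vee}$ with the kernel of $\ast$ goes through the explicit generators $d_{f,n}$ of Proposition \ref{prop:80}, and these depend on a choice of normal basis generator $\zeta_{(f)}\in\mu(\Q_{p^f})$. If you pick $\zeta_{(f)}$ arbitrarily for each $f$, the squares connecting the level-$f$ and level-$f'$ sequences will \emph{not} commute, so there is no projective system of exact sequences to which a Mittag--Leffler argument could apply. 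Saying ``all transition maps are surjections of group algebras'' addresses only exactness of $\varprojlim$ once a projective system is in hand; it does not address whether the system exists.

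The paper fixes this by first producing a compatible family $(\zeta_{(f)})_{f\geq 1}$ (Lemmas \ref{lem:78} and \ref{lem:77}): one reduces mod $p$, uses the surjectivity of the trace $\F_{p^f}\to\F_{p^{f'}}$ together with a compactness argument on the finite nonempty sets of normal basis generators to extract a trace-compatible family, and then Teichm\"uller-lifts it. This compatible choice makes the local points satisfy $\Tr_{\Q_{p^f,n}/\Q_{p^{f'},n}}(d_{f,n}) = \alpha_{f/f'}\,d_{f',n}$ for units $\alpha_{f/f'}$, which yields the commutative ladder needed before any limit can be taken. Your proof needs this step; without it, part (1) is not established for infinite $g$.

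Two smaller points. First, in part (2), your criterion ``$\varphi+\varphi^{-1}$ is a unit iff no finite-order character sends $\varphi$ to a primitive fourth root of unity'' is stated as though obvious, but being a unit in $R_{g,-1}$ requires the character values $\chi(\varphi)+\chi(\varphi)^{-1}$ to be $p$-adic \emph{units}, not merely nonzero; this needs the elementary but nontrivial computation carried out in \cite[Lemma 3.6]{KO18} or \cite[Remark 4.27]{Kata_08}, which the paper cites. Second, your explicit $A\times B$ decomposition argument for $\pd_{\RR_g}(R_{g,-1})\leq 1$ and for the freeness of $(E^-)^{\vee}$ is correct and somewhat more hands-on than the paper's (which simply notes $\pd_{\RR_g}(R_{g,0}^{\nt})\leq 1$ and invokes projectivity of the kernel), but it is not the source of any difficulty.
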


We note here that our construction of the exact sequences in claim (1) is not canonical.

The rest of this subsection is devoted to the proof of Proposition \ref{prop:83}.
We begin with reviewing the following proposition.
For the proof, we refer to Kitajima and Otsuki \cite{KO18}, which in turn is based on Kobayashi \cite{Kob03}.

For each positive integer $f$, we define $\Z_{p^f}$ as the ring of integers of $\Q_{p^f}$.
Let $\mu(\Q_{p^f})$ denote the group of roots of unity in $\Q_{p^f}$.
Let us fix a family $(\zeta_{p^{n+1}})_n$ such that $\zeta_{p^{n+1}}$ is a primitive $p^{n+1}$-th root of unity and $(\zeta_{p^{n+1}})^p = \zeta_{p^n}$ for any $n$.

\begin{prop}[{\cite[Proposition 3.4 and Corollary 3.11]{KO18}}]\label{prop:80}
Let $f \geq 1$ be an integer and we choose an element $\zeta_{(f)} \in \mu(\Q_{p^f})$ which is a basis of $\Z_{p^f}$ as a $\Z_p[\Gal(\Q_{p^f}/\Q_p)]$-module.
Then, for each $n \geq -1$, there exists a unique element $d_{f, n} \in E(\Q_{p^f, n})_p$ such that
\[
\log_{\hat{E}}(d_{f, n}) = \varepsilon_{f, n} + \sum_{j=0}^{[(n+1)/2]} (-1)^j \frac{\pi_{f, n - 2j}}{p^j},
\]
where $\log_{\hat{E}}$ denotes the logarithm map of the formal group law $\hat{E}$ associated to $E$, and we put
\[
\varepsilon_{f, n} = \sum_{j = 1}^{\infty} (-1)^{j-1} \zeta_{(f)}^{\varphi^{-(n + 1 + 2j)}} p^j
\]
and
\[
\pi_{f, n} = \zeta_{(f)}^{\varphi^{-(n+1)}} (\zeta_{p^{n+1}}-1).
\]
Moreover, the family $(d_{f, n})_{n \geq -1}$ satisfies the following.
\begin{itemize}
\item[(1)]
\[
\Tr_{\Q_{p^f, n}/\Q_{p^{f}, n-1}}(d_{f, n}) = 
\begin{cases}
	- d_{f, n-2} & (n \geq 1)\\
	- (\varphi + \varphi^{-1}) d_{f, -1} & (n = 0).
\end{cases}
\]
\item[(2)]
\[
E(\Q_{p^f, n})_p = 
\begin{cases}
	(d_{f, n}, d_{f, n-1})_{R_{f, n}} & (n \geq 0)\\
	(d_{f, -1})_{R_{f, -1}} & (n = -1).
\end{cases}
\]
\end{itemize}
\end{prop}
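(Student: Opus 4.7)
The plan is to follow Kobayashi's construction in \cite{Kob03} (which treats $f = 1$), extended to general unramified $f$ by Kitajima--Otsuki in \cite{KO18}. The point $d_{f,n}$ is specified entirely through its image under the formal group logarithm, so the argument is fundamentally a $p$-adic integrality check. Since $a_p(E) = 0$, the formal group $\hat{E}$ over $\Z_p$ is of Honda type $t^2 + p$; after a suitable choice of parameter, its logarithm satisfies $\log_{\hat{E}}(X) = \sum_{j \geq 0} (-1)^j X^{p^{2j}}/p^j$. Uniqueness of $d_{f,n}$ is then immediate: $\log_{\hat{E}}$ is injective on $\hat{E}(\mathfrak{m}_{\Q_{p^f,n}})$ for $p$ odd, and since $E$ is supersingular the reduction $\ttilde{E}(\F_{p^f})$ has order prime to $p$, giving a canonical identification $\hat{E}(\mathfrak{m}_{\Q_{p^f,n}}) \simeq E(\Q_{p^f,n})_p$.

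The crux is existence: one must exhibit $d_{f,n} \in \hat{E}(\mathfrak{m}_{\Q_{p^f,n}})$ mapping under $\log_{\hat{E}}$ to
\[
\alpha_{f,n} = \varepsilon_{f,n} + \sum_{j=0}^{[(n+1)/2]} (-1)^j \frac{\pi_{f,n-2j}}{p^j}.
\]
This integrality check is the main obstacle. Each summand $\pi_{f,n-2j}/p^j$ has negative $p$-adic valuation, so naively $\alpha_{f,n}$ is not integral; however, Honda's integrality criterion for type $t^2+p$ only requires congruences of the shape $x^{p^2} + p x \equiv 0$ modulo a suitable ideal. These hold for the cyclotomic elements $\pi_{f,n}$ thanks to the norm relation $N_{\Q_p(\mu_{p^{n+1}})/\Q_p(\mu_{p^{n-1}})}(\zeta_{p^{n+1}} - 1) = \zeta_{p^{n-1}} - 1$ for $n \geq 1$, and feeding them into the criterion causes the alternating telescoping sum to collapse so that all $p$-denominators cancel. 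The tail $\varepsilon_{f,n}$, which depends only on $\zeta_{(f)}^{\varphi^{-(n+1+2j)}}$ for $j \geq 1$, is precisely what is needed to absorb the boundary contributions that remain after telescoping. This is carried out at the level of power series by producing a Coleman-type $g(T)$ over the ring of integers of $\Q_p^{\ur}$ whose specialization at $\zeta_{p^{n+1}}-1$ realizes $\alpha_{f,n}$.

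For the trace relations in (1), one applies $\log_{\hat{E}}$ and reduces to computing $\Tr_{\Q_{p^f,n}/\Q_{p^f,n-1}}$ of each $\pi_{f,n-2j}$ and of $\varepsilon_{f,n}$. The key identity is $\Tr_{\Q_p(\mu_{p^{n+1}})/\Q_p(\mu_{p^n})}(\zeta_{p^{n+1}}) = -\zeta_{p^n}$ for $n \geq 1$, combined with the Galois-equivariance of the twists $\zeta_{(f)}^{\varphi^{-k}}$; the boundary case $n = 0$ produces the factor $\varphi + \varphi^{-1}$ because $\Q_p(\mu_p)/\Q_p$ is tamely ramified of degree $p-1$, so the trace contracts the $\mu_{p-1}$-component of inertia differently from the unramified direction. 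For generation in (2), a straightforward induction on $n$ using (1) yields the inclusion $(d_{f,n}, d_{f,n-1})_{R_{f,n}} \subset E(\Q_{p^f,n})_p$; equality then follows from matching $\Z_p$-ranks (both sides have rank $[\Q_{p^f,n}:\Q_p] = f p^n (p-1)$, the right side via $E(\Q_{p^f,n})_p \otimes \Q \simeq \Q_{p^f,n}$ under $\log_{\hat{E}}$) combined with an index computation based on the explicit form of $\log_{\hat{E}}(d_{f,n})$.
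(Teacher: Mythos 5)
The paper itself offers no proof of Proposition~\ref{prop:80} --- it is cited verbatim from Kitajima--Otsuki --- so the comparison here is between your sketch and the argument in \cite{KO18}/\cite{Kob03}. Your overall framework is the right one: choose the Honda parameter so that $\log_{\hat{E}}(X)=\sum_{j\ge 0}(-1)^j X^{p^{2j}}/p^j$, reduce uniqueness to injectivity of $\log_{\hat{E}}$ together with $\ttilde E(\F_{p^f})[p]=0$, establish existence via Honda's integrality criterion applied to a Coleman-type power series, and then verify the trace relation by applying $\log_{\hat{E}}$ and using that trace commutes with the logarithm.

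However, the key cyclotomic identity you invoke is incorrect, and this would derail the computation if carried out as written. For $n\ge 1$ the extension $\Q_p(\mu_{p^{n+1}})/\Q_p(\mu_{p^n})$ has degree $p$, the conjugates of $\zeta_{p^{n+1}}$ are $\zeta_{p^{n+1}}\zeta_p^i$ ($0\le i<p$), and therefore $\Tr_{\Q_p(\mu_{p^{n+1}})/\Q_p(\mu_{p^n})}(\zeta_{p^{n+1}})=0$, not $-\zeta_{p^n}$. The identity that actually drives the calculation is $\Tr_{\Q_p(\mu_{p^{n+1}})/\Q_p(\mu_{p^n})}(\zeta_{p^{n+1}}-1)=-p$ for all $n\ge 0$. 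Feeding this into $\Tr(\log_{\hat{E}}d_{f,n})$ one finds, after reindexing the finite sum, that the term $p\varepsilon_{f,n}-p\zeta_{(f)}^{\varphi^{-(n+1)}}$ must cancel against $-\varepsilon_{f,n-2}$; the telescoping identity $p\varepsilon_{f,n}+\varepsilon_{f,n-2}=p\,\zeta_{(f)}^{\varphi^{-(n+1)}}$ is exactly what the definition of $\varepsilon_{f,n}$ is engineered to produce. Your explanation of the $\varphi+\varphi^{-1}$ at $n=0$ also misattributes its origin: the tame degree $p-1$ only changes $\Tr(\varepsilon_{f,0})$ from $p\,\varepsilon_{f,0}$ to $(p-1)\varepsilon_{f,0}$; the operator $\varphi+\varphi^{-1}$ then emerges from the identity $\varphi\varepsilon_{f,-1}+p\varepsilon_{f,0}=p\,\zeta_{(f)}^{\varphi^{-1}}$ together with $\varphi^{-1}\varepsilon_{f,-1}=\varepsilon_{f,0}$, i.e.\ from the Frobenius-shift structure of the $\varepsilon$-tail, not from any contraction of the $\mu_{p-1}$-inertia component. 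With these two corrections the sketch aligns with the KO18 argument.
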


We should stress that the family $(d_{f, n})_{n \geq -1}$ in Proposition \ref{prop:80} depends on the choice of $\zeta_{(f)}$.
In Lemma \ref{lem:77} below, we will construct a certain compatible system $(\zeta_{(f)})_{f \geq 1}$.

\begin{lem}\label{lem:78}
There exists a family $(\zeta_{(f)})_{f \geq 1} \in \prod_{f \geq 1} \F_{p^f}^{\times}$, indexed by the positive integers $f$, satisfying the following.
\begin{itemize}
\item
For each $f \geq 1$, the element $\zeta_{(f)}$ is a basis of $\F_{p^f}$ as an $\F_p[\Gal(\F_{p^f}/\F_p)]$-module.
 \item
For each $f' \mid f$, we have $\Tr_{\F_{p^f}/\F_{p^{f'}}}(\zeta_{(f)}) = \zeta_{(f')}$.
\end{itemize}
\end{lem}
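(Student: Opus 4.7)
The plan is to realize the desired family as an element of the inverse limit $\varprojlim_{f} U_f$, where $U_f \subset \F_{p^f}$ denotes the set of normal basis generators of $\F_{p^f}/\F_p$ and the transition maps are the trace maps. By the Normal Basis Theorem, each $U_f$ is non-empty; fixing any $\alpha \in U_f$ yields an isomorphism $\F_{p^f} \cong \F_p[G_f]$ of $\F_p[G_f]$-modules (with $G_f = \Gal(\F_{p^f}/\F_p)$) under which $U_f$ corresponds to the unit group $\F_p[G_f]^\times$.

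The key step is to verify that for $f' \mid f$ the trace map $\Tr_{\F_{p^f}/\F_{p^{f'}}}$ sends $U_f$ into $U_{f'}$. Setting $H = \Gal(\F_{p^f}/\F_{p^{f'}})$ and $T_H = \sum_{\sigma \in H} \sigma \in \F_p[G_f]$, the subfield $\F_{p^{f'}} \subset \F_{p^f}$ is identified, under the fixed $\F_p[G_f]$-module isomorphism, with the ideal $T_H \cdot \F_p[G_f]$, which in turn is isomorphic to $\F_p[G_{f'}]$ via $T_H \mapsto 1$. Under these identifications, $\Tr_{\F_{p^f}/\F_{p^{f'}}}$ corresponds to the natural ring surjection $\F_p[G_f] \twoheadrightarrow \F_p[G_{f'}]$, which preserves units. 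Hence the trace of a normal basis generator is a normal basis generator.

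Having established this, I would conclude by a standard compactness argument that $\varprojlim_f U_f \neq \emptyset$. For any finite subset $\{f_1, \dots, f_k\}$ of indices, taking $f^* = \mathrm{lcm}(f_1, \dots, f_k)$ and propagating any chosen element of $U_{f^*}$ downward along the trace maps gives a partial compatible system (consistency follows from transitivity of trace). Thus the family of compatibility conditions on the compact product $\prod_f U_f$ has the finite-intersection property, and its total intersection is non-empty. Any element of this intersection supplies a family $(\zeta_{(f)})_{f \geq 1}$ with both required properties.

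The main obstacle is the identification of the trace with the natural projection $\F_p[G_f] \twoheadrightarrow \F_p[G_{f'}]$, i.e., the fact that the trace automatically preserves the property of being a normal basis generator, not merely of being nonzero. Once this is in hand, the inverse-limit/compactness step is routine.
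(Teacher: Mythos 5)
Your proposal is correct and follows essentially the same approach as the paper's: show that the trace map carries normal basis generators to normal basis generators, so the sets $B_f = U_f$ form a projective system of finite non-empty sets, and then conclude by compactness. The only difference is cosmetic: the paper obtains the key closure property tersely from the surjectivity and $\F_p[\Gal(\F_{p^{f'}}/\F_p)]$-linearity of the trace, whereas you spell it out by identifying the trace with the natural ring surjection $\F_p[G_f] \twoheadrightarrow \F_p[G_{f'}]$ under a chosen normal basis isomorphism; and the paper invokes $\varprojlim B_f \neq \emptyset$ directly for a projective system of finite non-empty sets, whereas you rephrase this as a finite-intersection-property argument.
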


\begin{proof}
For each $f \geq 1$, it is well-known that $\F_{p^f}$ is free of rank one over $\F_p[\Gal(\F_{p^f}/\F_p)]$.
Let $B_f$ be the (non-empty) set of bases of $\F_{p^f}$ as an $\F_p[\Gal(\F_{p^f}/\F_p)]$-module.
Since $\Tr_{\F_{p^f}/\F_{p^{f'}}}: \F_{p^f} \to \F_{p^{f'}}$ is surjective for each $f' \mid f$ as is also well-known, the family $(B_f)_{f \geq 1}$ consists a projective system of sets with respect to the trace maps.
As each $B_f$ is a finite set, the projective limit $\varprojlim_{f \geq 1} B_f$ must be non-empty, and any element of the limit is what we want.
\end{proof}

\begin{lem}\label{lem:77}
There exists a family $(\zeta_{(f)})_{f \geq 1} \in \prod_{f \geq 1} \mu(\Q_{p^f})$ satisfying the following.
\begin{itemize}
\item
For each $f \geq 1$, the element $\zeta_{(f)}$ is a basis of $\Z_{p^f}$ as a $\Z_p[\Gal(\Q_{p^f}/\Q_p)]$-module.
\item
For each $f' \mid f$, there exists an element $\alpha_{f/f'} \in \Z_p[\Gal(\Q_{p^{f'}}/\Q_p)]^{\times}$ such that we have $\Tr_{\Q_{p^f}/\Q_{p^{f'}}}(\zeta_{(f)}) = \alpha_{f/f'} \zeta_{(f')}$.
\end{itemize}
\end{lem}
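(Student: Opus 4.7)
The plan is to construct the desired family $(\zeta_{(f)})_{f \geq 1}$ as the Teichm\"{u}ller lifts of the compatible family $(\overline{\zeta}_{(f)})_{f \geq 1} \in \prod_{f \geq 1} \F_{p^f}^{\times}$ provided by Lemma \ref{lem:78}. Let $\tau \colon \F_{p^f}^{\times} \to \mu(\Q_{p^f})$ denote the Teichm\"{u}ller section; it is Galois-equivariant because both the source and target are acted on by $\varphi$ via the $p$-th power map. Put $\zeta_{(f)} := \tau(\overline{\zeta}_{(f)})$.

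For the normal integral basis property, set $R_f := \Z_p[\Gal(\Q_{p^f}/\Q_p)]$ and recall that $\Z_{p^f}$ is free of rank one over $R_f$ (normal integral basis theorem for unramified extensions). The submodule $R_f \cdot \zeta_{(f)} \subseteq \Z_{p^f}$ becomes all of $\F_{p^f}$ after reduction modulo $p$ because $\overline{\zeta}_{(f)}$ generates $\F_{p^f}$ freely over $\F_p[\Gal(\F_{p^f}/\F_p)]$ by construction. Since $p$ lies in the Jacobson radical of the semi-local ring $R_f$, Nakayama's lemma upgrades this to $R_f \cdot \zeta_{(f)} = \Z_{p^f}$, and comparing $\Z_p$-ranks forces the natural surjection $R_f \twoheadrightarrow R_f \cdot \zeta_{(f)}$ to be an isomorphism.

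For the trace compatibility, fix $f' \mid f$. By the previous step, $\zeta_{(f')}$ generates $\Z_{p^{f'}}$ freely over $R_{f'}$, so there is a unique $\alpha_{f/f'} \in R_{f'}$ with $\Tr_{\Q_{p^f}/\Q_{p^{f'}}}(\zeta_{(f)}) = \alpha_{f/f'} \cdot \zeta_{(f')}$. Since the trace map commutes with reduction modulo $p$ and $\tau$ is a section of this reduction, we obtain
\[
\alpha_{f/f'} \cdot \zeta_{(f')} \equiv \Tr_{\F_{p^f}/\F_{p^{f'}}}\bigl(\overline{\zeta}_{(f)}\bigr) = \overline{\zeta}_{(f')} \pmod{p},
\]
using the trace compatibility of Lemma \ref{lem:78} in the last equality. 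By the free generation mod $p$, this forces $\alpha_{f/f'} \equiv 1 \pmod{p}$ in $R_{f'}$, and hence $\alpha_{f/f'} \in R_{f'}^{\times}$ because $p$ is topologically nilpotent in the semi-local ring $R_{f'}$.

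The argument is essentially mechanical once Teichm\"{u}ller lifts are adopted. The only subtle point, and the one place where some care is needed, is that the Teichm\"{u}ller map does not commute with the trace, so one cannot expect the naive equality $\Tr_{\Q_{p^f}/\Q_{p^{f'}}}(\zeta_{(f)}) = \zeta_{(f')}$. The saving grace is that the discrepancy lies in $1 + p R_{f'} \subseteq R_{f'}^{\times}$, which is exactly the flexibility built into the statement of the lemma.
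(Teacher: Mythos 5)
Your proof is correct and follows essentially the same approach as the paper: lift the family from Lemma \ref{lem:78} via the Teichm\"{u}ller (mod-$p$) correspondence $\mu(\Q_{p^f}) \simeq \F_{p^f}^{\times}$, apply Nakayama's lemma for the normal integral basis condition, and observe $\alpha_{f/f'} \equiv 1 \pmod{p}$ for the trace compatibility. You simply spell out the details that the paper leaves implicit.
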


\begin{proof}
For each $f \geq 1$, the mod $p$ reduction map gives rise to a one-to-one correspondence
\begin{equation}\label{eq:78}
\mu(\Q_{p^f}) \simeq \F_{p^f}^{\times}.
\end{equation}
We take a family $(\zeta_{(f)})_{f \geq 1} \in \prod_{f \geq 1} \F_{p^f}^{\times}$ as in Lemma \ref{lem:78}, and then lift it to $(\zeta_{(f)})_{f \geq 1} \in \prod_{f \geq 1} \mu(\Q_{p^f})$ via the above correspondence (we abuse the notation).
Then by Nakayama's lemma, the first condition holds.
The second condition is also easy to see; we actually have $\alpha_{f/f'} \equiv 1 \mod (p)$.
\end{proof}

Now we begin the proof of Proposition \ref{prop:83}.

\begin{proof}[Proof of Proposition \ref{prop:83}]
As noted in \cite[Remark 3.4]{Kata_08}, properties (1) and (2) in Proposition \ref{prop:80} enable us to mimic the argument of \cite[\S 4.3]{Kata_08}.
As a consequence, we obtain the exact sequences claimed in Proposition \ref{prop:83}(1) for each positive integer $f$ instead of $g$.

In order to deal with a supernatural number $g$, we shall take limits with respect to positive integers $f \mid g$.
For that purpose, we make use of the system $(\zeta_{(f)})_{f \mid g} \in \prod_{f \mid g} \mu(\Q_{p^f})$ as in Lemma \ref{lem:77}, and accordingly construct a family 
\[
(d_{f, n})_{f \mid g, n \geq -1} \in \prod_{f \mid g, n \geq -1} E(\Q_{p^f, n})
\]
by Proposition \ref{prop:80}.
Then we have 
\begin{equation}\label{eq:norm_d}
\Tr_{\Q_{p^f, n}/\Q_{p^{f'}, n}}(d_{f, n}) = \alpha_{f/f'} d_{f', n}
\end{equation}
for each $f' \mid f \mid g$ and $n \geq -1$.
This is because both $(\varepsilon_{f, n})_f$ and $(\pi_{f, n})_f$ in Proposition \ref{prop:80} satisfy the corresponding relations.

Thanks to \eqref{eq:norm_d}, we have compatibility between the exact sequences for various $f$.
For instance, for the $+$ case, we have a commutative diagram
\[
\xymatrix{
	0 \ar[r] 
	& (E^+(\Q_{p^f, \infty}) \otimes (\Q_p/\Z_p))^{\vee} \ar[r] \ar@{->>}[d]_{\alpha_{f/f'}}
	& \RR_{f} \oplus R_{f, -1} \ar[r] \ar@{->>}[d]
	& R_{f, -1} \ar[r] \ar@{->>}[d]
	& 0\\
	0 \ar[r] 
	& (E^+(\Q_{p^{f'}, \infty}) \otimes (\Q_p/\Z_p))^{\vee} \ar[r]
	& \RR_{f'} \oplus R_{f', -1} \ar[r]
	& R_{f', -1} \ar[r]
	& 0
}
\]
for each $f' \mid f$.
Here, the middle and the right vertical arrows are the natural homomorphisms, while the left vertical arrow is the multiplication by $\alpha_{f/f'}$ following the natural map.
The commutativity easily follows from \eqref{eq:norm_d}, but we need to recall the detailed construction of the exact sequences, so we omit it.
Therefore, by taking the projective limit with respect to positive integers $f \mid g$, we obtain the exact sequences claimed in Proposition \ref{prop:83}(1) for a supernatural number $g$.
Note that we ignored the multiplication by $\alpha_{f/f'}$, which is possible since it is at any rate a unit and does not affect the module structure of the limit.

We briefly check that claim (2) follows from (1).
Since the $\RR_g$-module $R_{g, 0}^{\nt}$ is torsion, the statement on the generic rank is clear.
Since $\pd_{\RR_g}(R_{g, 0}^{\nt}) \leq 1$, the module $(E^-(\Q_{p^{g}, \infty}) \otimes (\Q_p/\Z_p))^{\vee}$ is free over $\RR_g$.
On the other hand, the structure of $(E^+(\Q_{p^{g}, \infty}) \otimes (\Q_p/\Z_p))^{\vee}$ depends on the endomorphism $\varphi + \varphi^{-1}$ on $R_{g, -1}$.
As in \cite[Lemma 3.6]{KO18} or \cite[Remark 4.27]{Kata_08}, it is isomorphic if and only if $4 \nmid g$.
If $4 \mid g$, the homomorphism is not injective, so $(E^+(\Q_{p^{g}, \infty}) \otimes (\Q_p/\Z_p))^{\vee}$ cannot be free.
Thus we obtain claim (2).
\end{proof}

\subsection{Consequences of Proposition \ref{prop:83}}\label{subsec:conseq_str}

We shall observe immediate consequences of Proposition \ref{prop:83}.
We continue to suppose that $E/\Q_p$ is a supersingular elliptic curve with $a_p(E) = 0$.
Let $g$ be a supernatural number.

We put
\[
D_g^{\pm} 
= \parenth{H^1(\Q_{p^g, \infty}, E[p^{\infty}]) \over E^{\pm}(\Q_{p^g, \infty}) \otimes (\Q_p/\Z_p)}^{\vee}
\]
and
\[
D_g^1 
= \parenth{H^1(\Q_{p^g, \infty}, E[p^{\infty}]) \over E(\Q_{p^g}) \otimes (\Q_p/\Z_p)}^{\vee}.
\]
These are regarded as submodules of $H^1(\Q_{p^g, \infty}, E[p^{\infty}])^{\vee}$.

\begin{prop}\label{prop:D_str}
The following are true.
\begin{itemize}
\item[(i)]
The $\RR_g$-module $H^1(\Q_{p^g, \infty}, E[p^{\infty}])^{\vee}$ is free of rank two.
\item[(ii)]
The $\RR_g$-module $D_g^{\pm}$ is free of rank one.
Moreover, $D_g^{\pm}$ is a direct summand of $H^1(\Q_{p^g, \infty}, E[p^{\infty}])^{\vee}$ if and only if either $\pm = -$ or $4 \nmid g$.
\item[(iii)]
We have
\[
D_g^1 = D_g^+ \oplus D_g^-.
\]
\end{itemize}
\end{prop}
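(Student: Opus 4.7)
The plan is to establish (i), (iii), (ii) in that order, using Proposition~\ref{prop:pm_str}, the explicit exact sequences of Proposition~\ref{prop:83}, and the Kummer sequence
\[
0 \to E(\Q_{p^g, \infty}) \otimes (\Q_p/\Z_p) \to H^1(\Q_{p^g, \infty}, E[p^\infty]) \to H^1(\Q_{p^g, \infty}, E)[p^\infty] \to 0.
\]
For (i), I would use local Tate duality (via the Weil pairing $E[p^\infty] \otimes T_p E \to \mu_{p^\infty}$) to identify $H^1(\Q_{p^g, \infty}, E[p^\infty])^\vee$ with the Iwasawa cohomology $\varprojlim_n H^1(\Q_{p^g, n}, T_p E)$ up to the involution $\iota$. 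Since $\Q_{p^g, \infty} \supset \Q_p(\mu_{p^\infty})$ has $p$-cohomological dimension $1$, the second Iwasawa cohomology vanishes, and $(T_p E)^{G_{\Q_{p^g, \infty}}} = 0$ because $E$ is supersingular; standard local Iwasawa theory then yields that this module is free of rank $2 = \rank_{\Z_p}(T_p E)$ over $\RR_g$.

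Write $L^\bullet$ for the image inside $H^1 = H^1(\Q_{p^g, \infty}, E[p^\infty])$ of $E^\bullet(\Q_{p^g, \infty}) \otimes (\Q_p/\Z_p)$ for $\bullet \in \{+, -\}$, and $L^1 = E(\Q_{p^g}) \otimes (\Q_p/\Z_p)$. For (iii), Proposition~\ref{prop:pm_str} (taken at $n = \infty$) gives $L^+ \cap L^- = L^1$ and $L^+ + L^- = E(\Q_{p^g, \infty}) \otimes (\Q_p/\Z_p)$; since $\cd_p(\Q_{p^g, \infty}) = 1$ forces $H^1(\Q_{p^g, \infty}, E)[p^\infty] = 0$, the Kummer sequence above upgrades this to $L^+ + L^- = H^1$. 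The natural map $D_g^+ \oplus D_g^- \to D_g^1$ induced by the inclusions (all three viewed as submodules of $H^{1,\vee}$) then has kernel $(H^1/(L^+ + L^-))^\vee = 0$, and surjectivity follows from the identification $H^1/L^+ \cong L^-/L^1$, which allows any $\chi \in D_g^1$ to be decomposed as $\chi_+ + \chi_-$ with $\chi_{\pm}|_{L^\pm} = 0$.

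For (ii), the exact sequence $0 \to D_g^\pm \to H^{1,\vee} \to (L^\pm)^\vee \to 0$ combined with (i) and $\pd_{\RR_g}((L^\pm)^\vee) \leq 1$ from Proposition~\ref{prop:83}(2) shows that $D_g^\pm$ is projective of constant rank one over $\RR_g$. Since $\RR_g$ is semilocal (a finite product of complete Noetherian local rings via the idempotents of $\Z_p[\mu_{p-1}]$), projective modules of constant rank are free, so $D_g^\pm$ is free of rank one. The sequence splits, realizing $D_g^\pm$ as a direct summand of $H^{1,\vee}$, if and only if $(L^\pm)^\vee$ is projective, iff it is free of rank one, which by Proposition~\ref{prop:83}(2) occurs iff $\pm = -$ or $4 \nmid g$.

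The main obstacle is the freeness assertion in (i): it rests on local Iwasawa theory in an equivariant setting where $g$ may be a supernatural number making $\Q_{p^g}$ of infinite degree over $\Q_p$, so one must verify the vanishing of $H^0_{\Iw}$ and $H^2_{\Iw}$ (respectively from the supersingular condition and from $\cd_p(\Q_{p^g, \infty}) = 1$), and then conclude freeness from the resulting projective dimension and rank computations.
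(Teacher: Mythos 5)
Your argument for parts (i) and (ii) essentially matches the paper's.  For (i), the paper also passes through local Tate duality to identify $H^1(\Q_{p^g,\infty}, E[p^\infty])^\vee$ with $H^1_{\Iw}(\Q_{p^g,\infty}, T_pE)$, shows $\RG_{\Iw}(\Q_{p^g,\infty}, T_pE)$ is concentrated in degree $1$ using $E(\Q_{p^g,\infty})[p]=0$, and reads off freeness and rank from perfectness and Euler characteristics; for (ii) the paper likewise deduces everything from the short exact sequence $0 \to D_g^{\pm} \to H^{1,\vee} \to (L^\pm)^\vee \to 0$ together with Proposition~\ref{prop:83}(2).  One small imprecision in your (i): you attribute the vanishing of ``the second Iwasawa cohomology'' to $\cd_p(\Q_{p^g,\infty}) = 1$.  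That bound kills the \emph{direct-limit} groups $H^i(\Q_{p^g,\infty}, E[p^n])$ for $i \geq 2$, but $H^2_{\Iw}$ is an \emph{inverse} limit; what one actually uses (and what the paper does) is local Tate duality plus self-duality of $T_pE$ to identify $H^2_{\Iw}(\Q_{p^g,\infty}, T_pE)$ with $H^0(\Q_{p^g,\infty}, E[p^\infty])^\vee$, which then vanishes by the supersingular condition --- the same reason $H^0_{\Iw}$ vanishes, not a separate cohomological-dimension argument.

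For (iii) you take a genuinely different route, and it has a gap.  The paper uses only the ``intersection'' half of Proposition~\ref{prop:pm_str}: from $L^1 = L^+ \cap L^-$ one dualizes the left-exact sequence $0 \to L^1 \to H^1 \to H^1/L^+ \oplus H^1/L^-$ to get $D_g^+ \oplus D_g^- \to H^{1,\vee} \to (L^1)^\vee \to 0$ exact at the middle and right; injectivity of the first map then comes for free from a rank count ($D_g^\pm$ are free of rank $1$, $H^{1,\vee}$ free of rank $2$, $(L^1)^\vee$ torsion).  You instead also invoke the ``sum'' half and need to promote $L^+ + L^- = E(\Q_{p^g,\infty}) \otimes (\Q_p/\Z_p)$ to $L^+ + L^- = H^1(\Q_{p^g,\infty}, E[p^\infty])$, i.e.\ $H^1(\Q_{p^g,\infty}, E)[p^\infty] = 0$.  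The justification you give --- that $\cd_p(\Q_{p^g,\infty}) = 1$ forces this --- does not work: via the Kummer sequence, $\cd_p \leq 1$ gives $H^2(\Q_{p^g,\infty}, E[p^n]) = 0$, which shows only that $H^1(\Q_{p^g,\infty}, E)$ is $p$-divisible; it says nothing about the vanishing of its $p$-primary \emph{torsion}.  The fact you want is true, but it is a nontrivial input (dual to the vanishing of the trace-compatible systems $\varprojlim_n E(K_n)^{\wedge}$ in the supersingular tower, which one must establish separately), and the paper's proof is arranged precisely so as not to need it.  Either supply a genuine proof of $H^1(\Q_{p^g,\infty}, E)[p^\infty] = 0$, or switch to the rank-count argument.
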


\begin{proof}
(i)
We have $E(\Q_{p^g, \infty})[p] = 0$ because of the reduction type (e.g., \cite[Proposition 3.1]{KO18}).
It is known to experts that the claim follows from this, together with the self-duality of $T_pE$ and the local Tate duality.
We briefly explain the proof by using the notion of perfect complexes that will be introduced in the subsequent sections.
Let us consider the Iwasawa cohomology complex $\RG_{\Iw}(\Q_{p^g, \infty}, T_pE) \in D^{[0, 2]}(\RR_g) $ (see \S \ref{subsec:perf} and \S \ref{sec:CM_14}).
The local Tate duality implies $H^1(\Q_{p^g, \infty}, E[p^{\infty}])^{\vee} \simeq H^1_{\Iw}(\Q_{p^g, \infty}, T_pE)$.
Moreover, the fact $E(\Q_{p^g, \infty})[p] = 0$, together with the self-duality of $T_pE$ and the local Tate duality, implies $\RG_{\Iw}(\Q_{p^g, \infty}, T_pE) \in D^{[1, 1]}(\RR_g)$.
Then the claim follows by combining with the Euler-Poincare characteristic formula.

(ii)
This immediately follows from Proposition \ref{prop:83}(2) and claim (i).

(iii)
By Proposition \ref{prop:pm_str}, we have
\[
E(\Q_{p^g}) \otimes (\Q_p/\Z_p) 
= \parenth{E^+(\Q_{p^g, \infty}) \otimes (\Q_p/\Z_p)} 
\cap \parenth{E^-(\Q_{p^g, \infty}) \otimes (\Q_p/\Z_p)}
\]
in $E(\Q_{p^g, \infty}) \otimes (\Q_p/\Z_p)$.
This implies that we have a natural exact sequence
\[
D_g^+ \oplus D_g^- \to H^1(\Q_{p^g, \infty}, E[p^{\infty}])^{\vee} \to (E(\Q_{p^g}) \otimes (\Q_p/\Z_p))^{\vee} \to 0.
\]
By claims (i) and (ii), the first arrow is injective since $(E(\Q_{p^g}) \otimes (\Q_p/\Z_p))^{\vee}$ is torsion.
Thus we obtain the claim.
\end{proof}

\begin{rem}\label{rem:LL}
When $g = f p^{\infty}$ (resp.~$g = f$) with a positive integer $f$, Lei and Lim \cite[\S 3.2]{LL21} (resp.~Lim \cite[\S 3.2]{Lim21}) studied the structure of (the $\Gal(\Q_p(\mu_p)/\Q_p)$-invariant part of) $E^{\pm}(\Q_{p^g, \infty}) \otimes (\Q_p/\Z_p)$ in a different way.
The results in this paper are more precise as the module structures are completely determined, and it is actually possible to reprove those previous results.
\end{rem}

\subsection{The ordinary case}\label{subsec:ord_local}

As an ordinary analogue of Proposition \ref{prop:D_str}, we also have the following.
We omit the proof as it is well-known (see, e.g., \cite{Gree11}).

\begin{prop}\label{prop:D_str_ord}
Let $k$ be a finite extension of $\Q_p$ and $L/k$ an abelian $p$-adic Lie extension such that $L \supset k(\mu_{p^{\infty}})$.
Let $E/k$ be an elliptic curve with good ordinary reduction such that $E(L)[p] = 0$.
Then the following are true.
\begin{itemize}
\item[(i)]
The $\Z_p[[\Gal(L/k)]]$-module $H^1(L, E[p^{\infty}])^{\vee}$ is free of rank $2[k: \Q_p]$.
\item[(ii)]
The $\Z_p[[\Gal(L/k)]]$-module 
\[
D = \parenth{H^1(L, E[p^{\infty}]) \over E(L) \otimes (\Q_p/\Z_p)}^{\vee}
\]
is free of rank $[k: \Q_p]$ and is a direct summand of $H^1(L, E[p^{\infty}])^{\vee}$.
\end{itemize}
\end{prop}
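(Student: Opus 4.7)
The plan is to follow the perfect-complex strategy of the proof of Proposition~\ref{prop:D_str}(i), now using the ordinary connected-\'etale filtration of $E[p^{\infty}]$ in place of the $\pm$-norm subgroups. Throughout I set $\Lambda = \Z_p[[\Gal(L/k)]]$.

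For claim (i), I would argue exactly as in the proof of Proposition~\ref{prop:D_str}(i). The Iwasawa cohomology complex $\RG_{\Iw}(L, T_pE)$ is perfect and lies in $D^{[0,2]}(\Lambda)$. The hypothesis $E(L)[p]=0$ forces $E(L)[p^{\infty}]=0$, whence $H^0_{\Iw}(L, T_pE)=0$; further, the self-duality of $T_pE$ via the Weil pairing, combined with local Tate duality and the inclusion $\mu_{p^{\infty}} \subset L$, yields $H^2_{\Iw}(L, T_pE) \simeq E(L)[p^{\infty}]^{\vee}=0$. So the complex lies in $D^{[1,1]}(\Lambda)$, and the local Euler--Poincar\'e characteristic formula (giving Euler characteristic $-\rank_{\Z_p}(T_pE)\cdot [k:\Q_p] = -2[k:\Q_p]$) then yields that $H^1_{\Iw}(L, T_pE) \simeq H^1(L, E[p^{\infty}])^{\vee}$ is $\Lambda$-free of rank $2[k:\Q_p]$.

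For (ii), the ordinary reduction hypothesis provides the connected-\'etale short exact sequence
\[
0 \to \hat{E}[p^{\infty}] \to E[p^{\infty}] \to \tilde{E}[p^{\infty}] \to 0
\]
of $p$-divisible Galois modules. Using $E(L)[p]=0$ together with the Cartier duality $\hat{E}[p^{\infty}]^{*} \simeq \tilde{E}[p^{\infty}]$ coming from the Weil pairing, I would verify as in paragraph~2 that the Iwasawa complexes $\RG_{\Iw}(L, T_p\hat{E})$ and $\RG_{\Iw}(L, T_p\tilde{E})$ also lie in $D^{[1,1]}(\Lambda)$. Since $T_p\hat{E}$ and $T_p\tilde{E}$ each have $\Z_p$-rank one, this produces a short exact sequence of $\Lambda$-free modules
\[
0 \to H^1(L, \tilde{E}[p^{\infty}])^{\vee} \to H^1(L, E[p^{\infty}])^{\vee} \to H^1(L, \hat{E}[p^{\infty}])^{\vee} \to 0
\]
of respective ranks $[k:\Q_p]$, $2[k:\Q_p]$, $[k:\Q_p]$.

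The substantive remaining step, and the one I expect to be the main obstacle, is to identify the Kummer image $E(L)\otimes(\Q_p/\Z_p)$ inside $H^1(L, E[p^{\infty}])$ with the image of $H^1(L, \hat{E}[p^{\infty}]) \to H^1(L, E[p^{\infty}])$. This identification is the classical Coates--Greenberg theory of deeply ramified extensions: the inclusion $L \supset k(\mu_{p^{\infty}})$ makes $L/k$ deeply ramified, which forces $H^1(L, \hat{E})[p^{\infty}]=0$ and yields the coincidence of the Kummer and Panchishkin local conditions for ordinary reduction. Granting this identification, $D$ is identified with $H^1(L, \tilde{E}[p^{\infty}])^{\vee}$, which by paragraph~3 is $\Lambda$-free of rank $[k:\Q_p]$; since it appears as the kernel in a short exact sequence of free $\Lambda$-modules with free quotient, the sequence splits and $D$ is a direct summand of $H^1(L, E[p^{\infty}])^{\vee}$, as claimed.
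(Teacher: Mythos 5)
The paper itself omits the proof of Proposition \ref{prop:D_str_ord}, stating only that it is well-known and referring to \cite{Gree11}, so there is no internal proof to compare against. Your argument is the standard one and is correct in outline. Claim (i) is exactly the perfect-complex argument used for Proposition \ref{prop:D_str}(i): $E(L)[p]=0$ kills $H^0_{\Iw}$, local Tate duality with the Weil pairing kills $H^2_{\Iw}$, and then the local Euler--Poincar\'e formula pins down the rank of the free module $H^1_{\Iw}(L,T_pE)$.

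For (ii), the two ingredients you invoke are correct. One small point worth making explicit, since you only allude to it: to get $\RG_{\Iw}(L,T_p\hat{E})$ and $\RG_{\Iw}(L,T_p\tilde{E})$ into $D^{[1,1]}(\Lambda)$ you need both $\hat{E}[p]^{G_L}=0$ and $\tilde{E}[p]^{G_L}=0$. The first is immediate from $E(L)[p]=0$, and the second then follows because the Weil pairing gives $\hat{E}[p]\otimes\tilde{E}[p]\simeq\mu_p$ as $G_L$-modules while $\mu_p\subset L$, so the two mod-$p$ characters are mutually inverse and hence simultaneously nontrivial on $G_L$. (The same observation handles the vanishing of $H^2$ needed to see the cohomology sequence $0\to H^1(L,\hat{E}[p^\infty])\to H^1(L,E[p^\infty])\to H^1(L,\tilde{E}[p^\infty])\to 0$ as short exact; one also has $\cd_p(G_L)\le 1$ since $L\supset\mu_{p^\infty}$.) The identification of the Kummer image with the image of $H^1(L,\hat{E}[p^\infty])$ is indeed the Coates--Greenberg theorem for deeply ramified $L$, and with that in hand the rest of your argument --- $D\simeq H^1(L,\tilde{E}[p^\infty])^{\vee}$, free of rank $[k:\Q_p]$, and a direct summand because the cokernel $H^1(L,\hat{E}[p^\infty])^{\vee}$ is projective --- is exactly right.
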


\section{Algebraic ingredients}\label{sec:02}

In this section, we review known facts on homological algebra,
following notations in \cite{Kata_12}.

\subsection{Perfect complexes}\label{subsec:perf}

We fix notations concerning perfect complexes.

Let $R$ be a (commutative) noetherian ring.
For integers $a \leq b$, let $D^{[a, b]}(R)$ be the derived category of perfect complexes which admits a quasi-isomorphism to a complex of the form
\[
C \simeq [C^a \to C^{a + 1} \to \dots \to C^b],
\]
concentrated in degrees $a, a+1, \dots, b$, such that each $C^i$ is finitely generated and projective over $R$.
For such a complex $C$, we define the determinant of $C$ by
\[
\Det_R(C) = \bigotimes_i \Det_R^{(-1)^i}(C^i).
\]
Here, for each finitely generated projective $R$-module $F$, let $\rank_R(F)$ be the (locally constant) rank of $F$ and put
\[
\Det_R(F) = \bigwedge_R^{\rank_R(F)} F,
\qquad \Det_R^{-1}(F) = \Hom_R(\Det_R(F), R).
\]
These are invertible $R$-modules.
We also define the Euler characteristic of $C$ by
\[
\chi_R(C) = \sum_i (-1)^{i-1} \rank_R(C^i).
\]
We define $C^* = \RHom_{R}(C, R) \in D^{[-b, -a]}(R)$ by using the derived homomorphism.

\subsection{Determinant modules and Fitting ideals}\label{subsec:24}

We recall a relation between determinant modules and Fitting ideals.
See \cite[\S 3]{Kata_10} for more details.

Let $\RR$ be a ring which contains a regular local ring $\Lambda \subset \RR$ such that $\RR$ is free of finite rank over $\Lambda$.
We moreover assume that we have an isomorphism
\[
\Hom_{\Lambda}(\RR, \Lambda) \simeq \RR
\]
as $\RR$-modules.
Note that this condition implies that there is an isomorphism $\Ext^i_{\RR}(M, \RR) \simeq \Ext^i_{\Lambda}(M, \Lambda)$ for each $\RR$-module $M$.
Each ring $\RR$ defined as the Iwasawa algebra in this paper satisfies this condition.

Let $C$ be a perfect complex such that all cohomology groups of $C$ are torsion over $\RR$ (equivalently, over $\Lambda$).
Let $Q(\RR)$ be the total ring of fractions of $\RR$.
Then we have a natural homomorphism $\iota_C: \Det_{\RR}^{-1}(C) \to Q(\RR)$ defined as the composite map
\[
\iota_C: \Det_{\RR}^{-1}(C) 
\hookrightarrow Q(\RR) \otimes_{\RR} \Det_{\RR}^{-1}(C) 
\simeq \Det_{Q(\RR)}^{-1}(Q(\RR) \otimesL_{\RR} C)
\simeq Q(\RR),
\]
where $\otimesL$ denotes the derived tensor product and the last isomorphism comes from the assumption that $Q(\RR) \otimesL_{\RR} C$ is acyclic.
We put $\iDet_{\RR}(C) = \iota_C(\Det_{\RR}^{-1}(C)) \subset Q(\RR)$, which is an invertible $\RR$-submodule of $Q(\RR)$.

We have the following relation between the Fitting ideals and determinant modules.
See \cite[\S 3]{Kata_10} and \cite[just before Definition 4.5]{Kata_12}.

\begin{prop}\label{prop:75}
Let $C \in D^{[1, 2]}(\RR)$ be a complex such that $H^1(C) = 0$ and that $H^2(C)$ is torsion over $\RR$.
Then we have $\iDet_{\RR}(C) = \Fitt_{\RR}(H^2(C)) \subset \RR$.
\end{prop}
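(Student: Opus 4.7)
The plan is to reduce the claim to a direct computation with a square presentation matrix. Since $C \in D^{[1,2]}(\RR)$, I first represent $C$ by a concrete two-term complex $C = [C^1 \xrightarrow{d} C^2]$ of finitely generated projective $\RR$-modules concentrated in degrees $1$ and $2$. The assumption $H^1(C) = 0$ forces $d$ to be injective, producing a short exact sequence
\begin{equation}
0 \to C^1 \xrightarrow{d} C^2 \to H^2(C) \to 0,
\end{equation}
which is a projective resolution of $H^2(C)$ of length one. Because $H^2(C)$ is $\RR$-torsion, $C^1$ and $C^2$ have the same locally constant rank.

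Both $\Fitt_\RR(H^2(C))$ and $\iDet_\RR(C) \subset Q(\RR)$ are compatible with localization at prime ideals of $\RR$, so it suffices to verify the equality after localizing at each prime; I may therefore assume $C^1 = C^2 = \RR^n$ are free of the same rank $n$ and $d$ is given by an $n \times n$ matrix $M$. The injectivity of $d$ combined with the adjugate identity $M \cdot \mathrm{adj}(M) = \det(M) I_n$ shows that $\det(M)$ is a non-zero-divisor in $\RR$, so $M$ becomes invertible over $Q(\RR)$ and the acyclicity of $Q(\RR) \otimesL_\RR C$ needed to define $\iota_C$ is automatic. For a square presentation of a torsion module, the Fitting ideal is principal and generated by the determinant, so $\Fitt_\RR(H^2(C)) = (\det M)$.

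It then remains to match $\iDet_\RR(C)$ with $(\det M)$ in this local model. I would identify $\Det_\RR^{-1}(C) = \Det_\RR(C^1) \otimes \Det_\RR^{-1}(C^2)$ with $\RR$ via the canonical generator $(e_1 \wedge \cdots \wedge e_n) \otimes (e_1 \wedge \cdots \wedge e_n)^{*}$, and trace it through the trivialization $\Det_{Q(\RR)}^{-1}(Q(\RR) \otimesL_\RR C) \simeq Q(\RR)$ coming from the acyclicity of $[Q(\RR)^n \xrightarrow{M} Q(\RR)^n]$. Since the map on determinants induced by $M$ is multiplication by $\det(M)$, the canonical generator of $\Det_\RR^{-1}(C)$ lands on $\det(M) \in Q(\RR)$, giving $\iDet_\RR(C) = (\det M)$. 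The one slightly delicate point is the bookkeeping of signs and dualization conventions in the trivialization — a priori one might worry about producing $\det(M)^{-1}$ rather than $\det(M)$ — but this is pinned down by the definition $\Det_\RR(C) = \bigotimes_i \Det_\RR^{(-1)^i}(C^i)$, which places $C^1$ on the "numerator" side and $C^2$ on the "denominator" side in exactly the right way.
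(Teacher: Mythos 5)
Your argument is correct and is the standard computation that the paper delegates to its references (\cite[\S 3]{Kata_10} and the discussion before Definition 4.5 of \cite{Kata_12}): represent $C$ by $[C^1 \xrightarrow{d} C^2]$, use $H^1(C)=0$ to get a length-one projective resolution of $H^2(C)$, reduce to the free case, and identify both $\iDet_{\RR}(C)$ and $\Fitt_{\RR}(H^2(C))$ with the principal ideal $(\det M)$. The localization step is legitimate since determinants, derived tensor products, and Fitting ideals all commute with flat base change, and equality of invertible $\RR$-submodules of $Q(\RR)$ can be checked locally; your sign bookkeeping at the end is also correct given the paper's convention $\Det_{\RR}(C) = \bigotimes_i \Det_{\RR}^{(-1)^i}(C^i)$.
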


\subsection{The key algebraic proposition}\label{subsec:11}

We recall a key algebraic proposition in \cite{Kata_12}.
Let $\RR$ be as in \S \ref{subsec:24}.

\begin{prop}[{\cite[Proposition 2.1]{Kata_12}}]\label{prop:62}
Let $C \in D^{[0, 1]}(\RR)$ be a complex such that $H^0(C) = 0$.
We put $l = \chi_{\RR}(C)$.
Then we have a natural homomorphism
\[
\Psi_C: \bigwedge_{\RR}^l H^1(C) \to \Det_{\RR}^{-1}(C)
\]
such that
\[
\Ker(\Psi_C) = \left( \bigwedge_{\RR}^l H^1(C) \right)_{\tor}
\]
and
\[
\Coker(\Psi_C) \simeq \frac{\RR}{\Fitt_{\RR}(E^1(H^1(C)))}.
\]
\end{prop}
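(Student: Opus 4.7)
Since $C \in D^{[0,1]}(\RR)$ is perfect, my plan is to begin by replacing it up to quasi-isomorphism by a two-term complex $[P^0 \xrightarrow{d} P^1]$ of finitely generated projective $\RR$-modules. The hypothesis $H^0(C)=0$ makes $d$ injective, giving the short exact sequence
\[
0 \to P^0 \xrightarrow{d} P^1 \to H^1(C) \to 0.
\]
With $a = \rank_{\RR}(P^0)$ and $b = \rank_{\RR}(P^1)$ we have $l = \chi_{\RR}(C) = b - a$ and $\Det_{\RR}^{-1}(C) \cong \Det_{\RR}^{-1}(P^0) \otimes_{\RR} \Det_{\RR}(P^1)$.

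Next, I would construct $\Psi_C$ from the pairing that sends $(z_1 \wedge \cdots \wedge z_a) \otimes (y_1 \wedge \cdots \wedge y_l)$ in $\Det_{\RR}(P^0) \otimes_\RR \bigwedge_{\RR}^l P^1$ to $d(z_1) \wedge \cdots \wedge d(z_a) \wedge y_1 \wedge \cdots \wedge y_l \in \Det_{\RR}(P^1)$, re-read as a homomorphism $\bigwedge_{\RR}^l P^1 \to \Det_{\RR}^{-1}(C)$. This descends to $\bigwedge_{\RR}^l H^1(C)$: replacing any $y_i$ by $d(z)$ creates a wedge of $a+1$ elements in the image of $d$, and hence lies in the image of $\bigwedge_{\RR}^{a+1} P^0 = 0$. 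Independence of the chosen two-term representative follows from the standard functoriality of determinants under quasi-isomorphisms.

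For $\Ker(\Psi_C)$: the target $\Det_{\RR}^{-1}(C)$ is invertible, in particular torsion-free, so $(\bigwedge_{\RR}^l H^1(C))_{\tor} \subset \Ker(\Psi_C)$ automatically. For the reverse inclusion, the displayed sequence shows $H^1(C)$ has generic rank $l$, so both source (modulo torsion) and target are generically of rank one. A direct inspection at each minimal prime of $\RR$, where $P^0 \hookrightarrow P^1$ splits, shows $\Psi_C$ is a generic isomorphism, and this forces its kernel to be exactly the torsion submodule.

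For $\Coker(\Psi_C)$: working Zariski-locally I may assume $P^0 \cong \RR^a$ and $P^1 \cong \RR^b$ are free, in which case $\Det_{\RR}^{-1}(C)$ is identified with $\RR$, $d$ is represented by a matrix $A \in \RR^{b \times a}$, and $\Psi_C(\overline{y}_1 \wedge \cdots \wedge \overline{y}_l)$ becomes the $b \times b$ determinant $\det[A \mid y_1 \mid \cdots \mid y_l]$. Laplace expansion along the first $a$ columns realizes this determinant as an $\RR$-linear combination of the $a \times a$ minors of $A$, and specializing the $y_i$ to standard basis vectors recovers every such minor; hence the image equals the ideal of $a \times a$ minors of $A$. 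Dualizing the short exact sequence yields the presentation $\RR^b \xrightarrow{A^t} \RR^a \to E^1(H^1(C)) \to 0$, so this minor ideal is exactly $\Fitt_{\RR}(E^1(H^1(C)))$. Gluing these local identifications gives the claimed description of $\Coker(\Psi_C)$. The main delicate step is this globalization, together with verifying that the final identification is intrinsic to $C$ and independent of the projective representative chosen.
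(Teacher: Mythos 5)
Your proposal is correct, and it takes a genuinely different route from the one sketched in the paper. The paper (following \cite[Proposition 2.1]{Kata_12}) constructs $\Psi_C$ intrinsically: pass to the total ring of fractions $Q(\RR)$, where $C$ becomes a split complex and $\bigwedge^l_{Q(\RR)} H^1$ is canonically the inverse determinant, and then argue as the "key point" that the image of the resulting map lies in the integral lattice $\Det^{-1}_{\RR}(C)$. You instead choose a two-term projective representative $[P^0\xrightarrow{d}P^1]$ and define the map explicitly by the wedge formula $y_1\wedge\cdots\wedge y_l\mapsto(z_1\wedge\cdots\wedge z_a\mapsto dz_1\wedge\cdots\wedge dz_a\wedge y_1\wedge\cdots\wedge y_l)$, checking descent to $H^1(C)$ via the vanishing of $\bigwedge^{a+1}P^0$. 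Your construction makes integrality automatic (the formula visibly lands in $\Det^{-1}_\RR(C)$) and makes the cokernel computation transparent, identifying it with the ideal of maximal minors of the presentation matrix of $E^1(H^1(C))$; the paper's generic-fiber construction makes well-definedness automatic but pushes the work into verifying integrality, which in practice requires the same minor computation. So the two proofs shuffle the burden between "well-defined" and "lands in the lattice," and give the same map.

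Two small points you flag as delicate can be disposed of cheaply here: $\RR$ is a finite extension of a regular \emph{local} ring $\Lambda$, hence semilocal, so every finitely generated projective $\RR$-module of constant rank is free and $\Det^{-1}_\RR(C)$ is globally trivializable; thus the "Zariski-local" argument and the gluing step degenerate to a single global computation with $P^0\cong\RR^a$, $P^1\cong\RR^b$. Likewise, independence of the representative can be seen directly by tensoring with $Q(\RR)$ and observing that your wedge formula then agrees with the canonical isomorphism $\bigwedge^l_{Q(\RR)} H^1\simeq\Det^{-1}_{Q(\RR)}$, after which two representatives give maps agreeing after the injection $\Det^{-1}_\RR(C)\hookrightarrow\Det^{-1}_{Q(\RR)}$ and hence agree. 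With these remarks your argument is complete.
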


We briefly review the construction of $\Psi_C$.
We have natural maps
\[
\bigwedge_{\RR}^l H^1(C) 
\to Q(\RR) \otimes_{\RR} \bigwedge_{\RR}^l H^1(C)
\simeq \bigwedge_{Q(\RR)}^l H^1(Q(\RR) \otimesL_{\RR} C)
\simeq \Det^{-1}_{Q(\RR)}(Q(\RR) \otimesL_{\RR} C).
\]
A key point is that the image of this composite map is contained in $\Det^{-1}_{\RR}(C)$,
and then we define the map $\Psi_C$ as the induced one.
There is a generalization \cite[Proposition 2.2]{Kata_12} of this proposition, but we do not need it in this paper.

\section{Cohomological interpretations of Selmer groups and $p$-adic $L$-functions}\label{sec:CM_14}

We keep the notations in \S \ref{sec:Sel_p-L}, assuming Assumptions \ref{ass:reduction} and \ref{ass:non_anom}.
In this section, for each multi-index $\epsilon$, we introduce a complex $C_S^{\epsilon}$ that satisfies $H^2(C_S^{\epsilon}) \simeq \dSel{\epsilon}{S}$.
When $\epsilon$ is a multi-sign, we will reformulate the definition of the algebraic $p$-adic $L$-functions by using $C_S^{\epsilon}$.

We make use of well-known facts on complexes associated to Galois representations; see the book \cite{Nek06} by Nekov\'{a}\v{r} as a comprehensive reference.
The facts that we need in this paper are reviewed in \cite[\S 3.1]{Kata_12}, and we follow the notations there.

Recall that we took a set $S$ satisfying \eqref{eq:choiceS}.
Let us take an auxiliary finite set $\Sigma$ of places of $F$ such that
\[
\Sigma \supset S \cup S_p(F) \cup S_{\infty}(F)
\]
and such that $E$ has good reduction at any finite prime of $F$ not in $\Sigma$.
We define $K_{\infty, \Sigma}$ as the maximal algebraic extension of $K_{\infty}$ which is unramified outside $\Sigma$.
Note that then the module $T_pE$ is equipped with an action of $\Gal(K_{\infty, \Sigma}/F)$.
We put
\[
\Sigma_0 = \Sigma \setminus (S_p(F) \cup S_{\infty}(F)) \supset S.
\]

As in \cite[\S 3.1]{Kata_12}, we let $\RG_{\Iw}(K_{\infty, \Sigma}/K_{\infty}, T_pE)$ (resp.~$\RG_{\Iw}(K_{\infty, v}, T_pE)$ for each finite prime $v$ of $F$) be the global (resp.~local) Iwasawa cohomology complex.
Since we assume \eqref{eq:choiceS}, by \cite[Proposition 3.1]{Kata_12}, these are perfect complexes in $D^{[0, 2]}(\RR)$.
In Definition \ref{defn:C_S_e} below, we define $C_S^{\epsilon}$ by using these complexes.
Before that, we have to study the local cohomology groups for both $p$-adic and non-$p$-adic primes.

For $\pe \in S_p^{\ssr}$ and $\bullet \in \{0, 1, +, -, \rel\}$, we put
\[
D_{\pe}^{\bullet} = \left( \frac{H^1(K_{\infty, \pe}, E[p^{\infty}])}{\Loc_{\pe}^{\bullet}} \right) ^{\vee},
\]
where $\Loc_{\pe}^{\bullet}$ is defined in Definition \ref{defn:Loc}.
As an ordinary counterpart, for $\pe \in S_p^{\ord}$, we define
\[
D_{\pe} = \parenth{H^1(K_{\infty, \pe}, E[p^{\infty}]) \over E(K_{\infty, \pe}) \otimes (\Q_p/\Z_p)}^{\vee}.
\]
These are regarded as submodules of $H^1(K_{\infty, \pe}, E[p^{\infty}])^{\vee}$, which is by the local Tate duality isomorphic to $H^1_{\Iw}(K_{\infty, \pe}, T_pE)$.

We apply the results in \S \ref{sec:27} to the current semi-local setting (recall that we are assuming Assumptions \ref{ass:reduction} and \ref{ass:non_anom}).
A bit more precisely, for $\pe \in S_p^{\ssr}$, let $g$ be the residue degree of $K_{\infty}/F$ at $\pe$, which is in general a supernatural number.
By choosing a prime of $K_{\infty}$ above $\pe$, modules that we are studying are the induced modules of local counterparts associated to $\Q_{p^g, \infty}$, to which we can apply the results in \S \ref{sec:27}.
Therefore, as a consequence of Propositions \ref{prop:D_str} and \ref{prop:D_str_ord}, we obtain the following.

\begin{prop}\label{prop:CM_33}
The following are true.
\begin{itemize}
\item[(1)]
Let $\pe \in S_p^{\ssr}$.
\begin{itemize}
\item[(i)]
The $\RR$-module $H^1_{\Iw}(K_{\infty, \pe}, T_pE)$ is free of rank two.
\item[(ii)]
The $\RR$-module $D_{\pe}^{\pm}$ is free of rank one.
Moreover, $D_{\pe}^{\pm}$ is a direct summand of $H^1_{\Iw}(K_{\infty, \pe}, T_pE)$ if and only if either $\pm = -$ or the residual degree of $K_{\infty}/F$ at $\pe$ is not divisible by $4$.
\item[(iii)]
We have $D_{\pe}^1 = D_{\pe}^+ \oplus D_{\pe}^-$.
\item[(iv)]
We have $D_{\pe}^{\rel} = 0$ and $D_{\pe}^{0} = H^1_{\Iw}(K_{\infty, \pe}, T_pE)$.
\end{itemize}
\item[(2)]
Let $\pe \in S_p^{\ord}$.
\begin{itemize}
\item[(i)]
The $\RR$-module $H^1_{\Iw}(K_{\infty, \pe}, T_pE)$ is free of rank $2 \deg(\pe)$.
\item[(ii)]
The $\RR$-module $D_{\pe}$ is free of rank $\deg(\pe)$ and is a direct summand of $H^1_{\Iw}(K_{\infty, \pe}, T_pE)$.
\end{itemize}
\end{itemize}
\end{prop}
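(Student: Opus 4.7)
The plan is to reduce the semi-local statement to the purely local Propositions \ref{prop:D_str} and \ref{prop:D_str_ord} via a Shapiro-type induction from the decomposition group at $\pe$. Fix a prime $\Pe$ of $K_\infty$ above $\pe$ and write $\Lambda_\pe = \Z_p[[\Gal(K_{\infty, \Pe}/F_\pe)]]$ for the corresponding local Iwasawa algebra, identified with the completed group algebra of the decomposition subgroup at $\Pe$ inside $\Gal(K_\infty/F)$. Since $\pe$ admits only finitely many primes above it in $K_\infty$---a standard fact for $p$-adic primes in an abelian $p$-adic Lie extension containing $\mu_{p^\infty}$---the ring $\RR$ is free of finite rank over $\Lambda_\pe$. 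The identification $K_\infty \otimes_F F_\pe \simeq \prod_{\Pe \mid \pe} K_{\infty, \Pe}$ combined with Shapiro's lemma then yields an $\RR$-module isomorphism $H^1_{\Iw}(K_{\infty, \pe}, T_pE) \simeq \RR \otimes_{\Lambda_\pe} H^1_{\Iw}(K_{\infty, \Pe}, T_pE)$, together with analogous identifications for the quotients defining $D_\pe^\bullet$.

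For $\pe \in S_p^{\ssr}$, Assumption \ref{ass:reduction} gives $F_\pe = \Q_p$, and the hypothesis $K_\infty \supset \mu_{p^\infty}$ identifies $K_{\infty, \Pe}$ with $\Q_{p^g, \infty}$ in the notation of Section \ref{sec:27}, where $g$ is the residue degree of $K_\infty/F$ at $\pe$ in the sense of Definition \ref{defn:supernatural}; in particular $\Lambda_\pe \simeq \RR_g$. Applying Proposition \ref{prop:D_str} at $\Pe$ and inducing up to $\RR$, freeness and ranks transport directly, yielding (i) and the freeness statement of (ii). The direct-summand criterion in (ii) transfers as well: since $\RR$ is free over $\Lambda_\pe$, the induction functor is exact, preserves splittings, and---by projecting an induced module onto its $1 \otimes (-)$ component---also reflects direct summands. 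Part (iii) follows by inducing the local equality $D_g^1 = D_g^+ \oplus D_g^-$ of Proposition \ref{prop:D_str}(iii), while (iv) is immediate from $\Loc_\pe^{\rel} = H^1(K_{\infty, \pe}, E[p^{\infty}])$ and $\Loc_\pe^{0} = 0$ in Definition \ref{defn:Loc}.

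For $\pe \in S_p^{\ord}$, Assumption \ref{ass:non_anom} supplies $E(K_{\infty, \Pe})[p] = 0$, so Proposition \ref{prop:D_str_ord} applies with $k = F_\pe$ and $L = K_{\infty, \Pe}$, producing free $\Lambda_\pe$-modules of ranks $2[F_\pe : \Q_p]$ and $[F_\pe : \Q_p]$ with the asserted direct-summand property. Inducing to $\RR$ preserves rank and splitting, giving (i) with rank $2\deg(\pe)$ and (ii) with rank $\deg(\pe)$.

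The only step that is not purely formal is the observation that $\pe$ has finitely many primes above it in $K_\infty$, which is what makes $\RR$ free over $\Lambda_\pe$ and renders the induction functor well-behaved; granted this, the proposition is a routine transport of the local results of Section \ref{sec:27} up the semi-local tower.
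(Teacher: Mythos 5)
Your approach matches the paper's: choose a prime $\Pe$ of $K_\infty$ above $\pe$, identify the semi-local objects as modules induced from the local Iwasawa algebra of the decomposition group, and transport Propositions~\ref{prop:D_str} and~\ref{prop:D_str_ord} up to $\RR$. The one thing you flag as non-formal, however, is not correct as stated: it is not a standard fact that a $p$-adic prime has only finitely many primes above it in an abelian $p$-adic Lie extension containing $\mu_{p^\infty}$, and it fails in the generality of this paper. For $\pe \in S_p^{\ssr}$ Assumption~\ref{ass:reduction} forces $F_\pe = \Q_p$, so the decomposition group $D_\pe = \Gal(K_{\infty,\Pe}/\Q_p)$ is an abelian quotient of $G_{\Q_p}$ and hence has $\Z_p$-rank at most $2$; if $\Gal(K_\infty/F)$ has $\Z_p$-rank $d>2$ (for instance $F$ a quartic CM field with $p$ totally split and $K_\infty=\ttilde{F}(\mu_p)$), then $D_\pe$ has infinite index and $\pe$ lies under infinitely many primes of $K_\infty$. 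This does not sink the proof, because what you actually need is weaker than finite index: $\RR$ is free over $\Lambda_\pe=\Z_p[[D_\pe]]$ for any closed subgroup $D_\pe$ of the $p$-adic Lie group $\Gal(K_\infty/F)$ (possibly of infinite rank), the induction functor $\RR\otimes_{\Lambda_\pe}(-)$ is exact and carries free $\Lambda_\pe$-modules of rank $r$ to free $\RR$-modules of rank $r$, the Shapiro-type identification $\RG_{\Iw}(K_{\infty,\pe},T_pE)\simeq \RR\otimesL_{\Lambda_\pe}\RG_{\Iw}(K_{\infty,\Pe},T_pE)$ holds without any index-finiteness hypothesis, and your $1\otimes(-)$-projection argument for reflecting direct summands only uses freeness (with a chosen basis containing $1$), not finite rank. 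So: delete the finiteness claim, replace it by freeness of $\RR$ over the completed group ring of a closed subgroup, and the rest of your argument is sound and agrees with what the paper does.
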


Now let $\epsilon \in \prod_{\pe \in S_p^{\ssr}} \{0, 1, +, -, \rel\}$ be a multi-index.
We put
\[
D_{S_p^{\ssr}}^{\epsilon} 
= \bigoplus_{\pe \in S_p^{\ssr}} D_{\pe}^{\epsilon_{\pe}},
\qquad
D_{S_p^{\ord}}
= \bigoplus_{\pe \in S_p^{\ord}} D_{\pe}
\]
and
\[
D_p^{\epsilon} 
= D_{S_p^{\ssr}}^{\epsilon} \oplus D_{S_p^{\ord}}.
\]
By Proposition \ref{prop:CM_33}, the $\RR$-module $D_p^{\epsilon}$ is free of rank
\begin{equation}\label{eq:Drank}
2 \times \# \{\pe \in S_p^{\ssr} \mid \epsilon_{\pe} \in \{0, 1\} \}
+ \# \{\pe \in S_p^{\ssr} \mid \epsilon_{\pe} \in \{+, -\} \}
+ \sum_{\pe \in S_p^{\ord}} \deg(\pe).
\end{equation}

Next we consider non-$p$-adic primes.

\begin{lem}\label{lem:local_cohom}
Let $v$ be a finite prime of $F$ with $v \not \in S_{\ram, p}(K_{\infty}/F) \cup S_p(F)$.
Then we have 
\[
\pd_{\RR}(H^1_{\Iw}(K_{\infty, v}, T_pE)) \leq 1,
\qquad \pd_{\RR}(H^2_{\Iw}(K_{\infty, v}, T_pE)) \leq 2.
\]
\end{lem}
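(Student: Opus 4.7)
The plan combines two ingredients: $C_v := \RG_{\Iw}(K_{\infty, v}, T_pE)$ is perfect in $D^{[0, 2]}(\RR)$ by \cite[Proposition 3.1]{Kata_12} (applicable since $v \not\in S_{\ram, p}(K_{\infty}/F) \cup S_p(F)$), and the vanishing $H^0(C_v) = 0$. For the latter, $H^0_{\Iw}(K_{\infty, v}, T_pE)$ equals the inverse limit $\varprojlim_{K'/F} T_p\bigl(E(K' \otimes_F F_v)[p^{\infty}]\bigr)$ over finite intermediate extensions $K'$ of $K_{\infty}/F$. Since $v$ is non-$p$-adic, each group $E(K' \otimes_F F_v)[p^{\infty}]$ is finite (the formal group kernel of reduction contributes no nontrivial $p$-torsion in residue characteristic distinct from $p$, and the \'etale piece injects into the torsion of $\ttilde{E}$ over the finite residue field), so its Tate module vanishes.

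Next, I represent $C_v$ by a perfect complex $P^{\bullet} = [P^0 \xrightarrow{d^0} P^1 \xrightarrow{d^1} P^2]$ of finitely generated projective $\RR$-modules. The vanishing of $H^0(C_v)$ forces $d^0$ to be injective, so $Q := P^1/P^0$ has $\pd_{\RR}(Q) \leq 1$, and the two-term complex $[Q \to P^2]$ in degrees $[1, 2]$ is quasi-isomorphic to $C_v$. In particular, $H^1_{\Iw}(K_{\infty, v}, T_pE) = \Ker(Q \to P^2)$ and $H^2_{\Iw}(K_{\infty, v}, T_pE) = \Coker(Q \to P^2)$.

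To bound $\pd_{\RR}(H^2_{\Iw}) \leq 2$, I combine local Tate duality with the Gorenstein structure of $\RR$. Local Tate duality identifies $H^2_{\Iw} \simeq E(K_{\infty, v})[p^{\infty}]^{\vee}$, a finitely generated $\Z_p$-module of rank $\leq 2$. Since $\Gal(K_{\infty}/F)$ is a finite extension of $\Z_p^d$, the algebra $\RR$ is a finite product of Gorenstein local rings of Krull dimension $d + 1$. Via the Shapiro-induced isomorphism $H^2_{\Iw} \simeq \RR \otimes_{\RR_v} E(K_{\infty, w})[p^{\infty}]^{\vee}$ (for a chosen prime $w \mid v$, with $\RR_v = \Z_p[[G_v]]$ for the decomposition group $G_v$) and faithful flatness of $\RR$ over $\RR_v$, the task reduces to a projective-dimension bound over $\RR_v$, obtained by explicit analysis of the $G_v$-action on $E(K_{\infty, w})[p^{\infty}]$. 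Once $\pd_{\RR}(H^2_{\Iw}) \leq 2$ is in hand, the bound $\pd_{\RR}(H^1_{\Iw}) \leq 1$ follows by standard splicing: from $0 \to \Image(Q \to P^2) \to P^2 \to H^2_{\Iw} \to 0$ one gets $\pd_{\RR}(\Image) \leq 1$, and then $0 \to H^1_{\Iw} \to Q \to \Image \to 0$ with $\pd_{\RR}(Q), \pd_{\RR}(\Image) \leq 1$ forces $\pd_{\RR}(H^1_{\Iw}) \leq 1$.

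The main obstacle is the bound $\pd_{\RR}(H^2_{\Iw}) \leq 2$: naive splicing of the exact sequences arising from $P^{\bullet}$ yields only the circular estimate $\pd(H^2_{\Iw}) \leq \pd(H^1_{\Iw}) + 2$, so the structural analysis via local Tate duality and the Gorenstein/Shapiro framework is essential to escape this loop.
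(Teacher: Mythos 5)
Your proposal correctly reduces to the decomposition group via the Shapiro/induction isomorphism $H^i_{\Iw}(K_{\infty, v}, T_pE) \simeq \RR \otimes_{\RR_v} H^i_{\Iw}(K_{\infty, w}, T_pE)$ with $\RR_v = \Z_p[[G_v]]$, and your complex-theoretic splicing showing $\pd(H^1_{\Iw}) \leq 1$ from $\pd(H^2_{\Iw}) \leq 2$ is valid. The vanishing $H^0_{\Iw}(K_{\infty, v}, T_pE) = 0$ (Tate module of a finite group) is also correct.

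But the central bound $\pd_{\RR_v}(H^2_{\Iw}(K_{\infty, w}, T_pE)) \leq 2$ is left as an unproved appeal to ``explicit analysis of the $G_v$-action on $E(K_{\infty, w})[p^{\infty}]$,'' which you yourself flag as ``the main obstacle.'' This is the entire content of the lemma and you have not filled the gap. The invocation of the Gorenstein property of $\RR$ in Krull dimension $d+1$ is a red herring: Gorenstein gives no bound on projective dimension, and the global dimension $d+1$ can be arbitrarily large. What actually does the work is a structural fact about $\RR_v$ that you did not notice: because $v$ is non-$p$-adic, the maximal unramified subextension of $K_{\infty}/F$ at $v$ contributes a pro-cyclic factor, which combined with $K_{\infty} \supset \mu_{p^{\infty}}$ gives a free $\Z_p$-quotient; and because $v \not\in S_{\ram, p}(K_{\infty}/F)$ the ramification index at $v$ is prime to $p$. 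Hence $G_v \simeq \Z_p \times (\text{finite abelian of order prime to }p)$, so $\RR_v$ is a finite product of two-dimensional \emph{regular} local rings. Then every finitely generated $\RR_v$-module automatically has projective dimension $\leq 2$ by Auslander--Buchsbaum (or the Hilbert syzygy theorem), with no Galois-theoretic input whatsoever; and the improvement to $\pd \leq 1$ for $H^1_{\Iw}$ can be read off directly from the well-known fact that $H^1_{\Iw}$ has no nontrivial finite submodule (depth $\geq 1$), rather than via the detour through the complex. Your perfect-complex scaffolding is not wrong but it only relocates the difficulty, and the way out is the regularity and small dimension of $\RR_v$, not faithful flatness plus a hypothetical case analysis.
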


\begin{proof}
Let $G_v(K_{\infty}/F)$ be the decomposition subgroup of $\Gal(K_{\infty}/F)$ at $v$, and we put $\RR_v = \Z_p[[G_v(K_{\infty}/F)]]$.
By the assumption $v \not \in S_{\ram, p}(K_{\infty}/F) \cup S_p(F)$ (and $K_{\infty} \supset \mu_{p^{\infty}}$), the topological group $G_v(K_{\infty}/F)$ is isomorphic to the product of $\Z_p$ and a finite abelian group of order prime to $p$.
Hence the algebra $\RR_v$ is a finite product of regular local rings of Krull dimension $2$.

Let us take a finite prime $w$ of $K_{\infty}$ lying above $v$.
Then, for each $i = 1, 2$, the cohomology group $H^i_{\Iw}(K_{\infty, v}, T_pE)$ is the induced module of $H^i_{\Iw}(K_{\infty, w}, T_pE)$ with respect to the ring extension $\RR_v \subset \RR$.
The observation on $\RR_v$ above immediately shows $\pd_{\RR_v}(H^i_{\Iw}(K_{\infty, w}, T_pE)) \leq 2$ for $i = 1,2$.
Moreover, it is also well-known that $\pd_{\RR_v}(H^1_{\Iw}(K_{\infty, w}, T_pE)) \leq 1$, namely, $H^1_{\Iw}(K_{\infty, w}, T_pE)$ does not contain a non-trivial finite submodule.
Therefore, we obtain the lemma.
\end{proof}

Now we begin the definition of $C_S^{\epsilon}$.
For each $\pe \in S_p(F)$, as explained in the proof of Proposition \ref{prop:D_str}(i), we have $\RG_{\Iw}(K_{\infty, \pe}, T_pE) \in D^{[1, 1]}(\RR)$, so 
\[
\RG_{\Iw}(K_{\infty, \pe}, T_pE) \simeq H^1_{\Iw}(K_{\infty, \pe}, T_pE)[-1].
\]
By Proposition \ref{prop:CM_33}, we then obtain a triangle of {\it perfect} complexes
\begin{equation}\label{eq:local_cohom_p}
D_p^{\epsilon}[-1] 
\to \RG_{\Iw}(K_{\infty} \otimes \Q_p, T_pE) 
\to {H^1_{\Iw}(K_{\infty} \otimes \Q_p, T_pE) \over D_p^{\epsilon}}[-1].
\end{equation}
By Lemma \ref{lem:local_cohom}, for $v \not \in S_{\ram, p}(K_{\infty}/F) \cup S_p(F)$, we have a triangle of {\it perfect} complexes
\begin{equation}\label{eq:local_cohom}
H^1_{\Iw}(K_{\infty, v}, T_pE)[-1] \to \RG_{\Iw}(K_{\infty, v}, T_pE) \to H^2_{\Iw}(K_{\infty, v}, T_pE)[-2].
\end{equation}

\begin{defn}\label{defn:C_S_e}
Let $\epsilon$ be a multi-index.
We define a perfect complex $C_S^{\epsilon}$ over $\RR$ by a triangle
\begin{align}
& C_S^{\epsilon}
\to \RG_{\Iw}(K_{\infty, \Sigma}/K_{\infty}, T_pE)\\
& \to \bigoplus_{v \in S} \RG_{\Iw}(K_{\infty, v}, T_pE)
\oplus {H^1_{\Iw}(K_{\infty} \otimes \Q_p, T_pE) \over D_p^{\epsilon}}[-1]
\oplus \bigoplus_{v \in \Sigma_0 \setminus S} H^2_{\Iw}(K_{\infty, v}, T_pE)[-2],
\end{align}
where the last arrow is defined by using the second arrows of triangles \eqref{eq:local_cohom_p} and \eqref{eq:local_cohom}.
\end{defn}

Note that, as the notation indicates, $C_S^{\epsilon}$ does not depend on the choice of $\Sigma$ (up to quasi-isomorphism).
This is because, if $\Sigma' \supset \Sigma$, we have a triangle 
\[
\RG_{\Iw}(K_{\infty, \Sigma}/K_{\infty}, T_pE)
\to \RG_{\Iw}(K_{\infty, \Sigma'}/K_{\infty}, T_pE)
\to \bigoplus_{v \in \Sigma' \setminus \Sigma} \RG_{\Iw, /f}(K_{\infty, v}, T_pE),
\]
and we also have a quasi-isomorphism $\RG_{\Iw, /f}(K_{\infty, v}, T_pE) \simeq H^2_{\Iw}(K_{\infty, v}, T_pE)[-2]$ for a finite prime $v \not \in S_p(F)$ of $F$ at which $E$ has good reduction.
Here, the subscript $/f$ denotes the singular part.

By the Poitou--Tate duality (e.g., \cite[Equation (3.2)]{Kata_12}) and triangles \eqref{eq:local_cohom_p} and  \eqref{eq:local_cohom}, we also have an alternative description
\begin{equation}\label{eq:PTdual_e}
C_S^{\epsilon} 
\to D_p^{\epsilon}[-1]
\oplus \bigoplus_{v \in \Sigma_0 \setminus S} H^1_{\Iw}(K_{\infty, v}, T_pE)[-1]
\to \RG(K_{\infty, \Sigma}/K_{\infty}, E[p^{\infty}])^{\vee}[-2].
\end{equation}

Now we summarize properties of $C_S^{\epsilon}$.
Note that $E(K_{\infty})[p] = 0$ holds, thanks to Assumptions \ref{ass:reduction} and \ref{ass:non_anom}.

\begin{prop}\label{prop:C_S_epsilon}
Let $\epsilon$ be a multi-index.
\begin{itemize}
\item[(1)]
We have $C_S^{\epsilon} \in D^{[1, 2]}(\RR)$, $H^2(C_S^{\epsilon}) \simeq \dSel{\epsilon}{S}$, and
\[
\chi_{\RR}(C_S^{\epsilon}) 
= \# \{\pe \in S_p^{\ssr} \mid \epsilon_{\pe} \in \{0, 1\} \} 
- \# \{\pe \in S_p^{\ssr} \mid \epsilon_{\pe} = \rel \}.
\]
\item[(2)]
If $\epsilon \in \prod_{\pe \in S_p^{\ssr}} \{+, -, \rel\}$ and Assumption \ref{ass:gen_tor} holds for at least one multi-sign $\epsilon'$ with $\epsilon' \leq \epsilon$, then we also have $H^1(C_S^{\epsilon}) = 0$.
\end{itemize}
\end{prop}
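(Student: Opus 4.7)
The plan is to read off both statements from the Poitou--Tate alternative description \eqref{eq:PTdual_e} of $C_S^{\epsilon}$, together with the local rank information collected in Proposition~\ref{prop:CM_33}.

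For part~(1), Assumptions~\ref{ass:reduction} and \ref{ass:non_anom} yield $E(K_{\infty})[p^{\infty}]=0$, hence $\RG(K_{\infty,\Sigma}/K_{\infty},E[p^{\infty}])\in D^{[1,2]}(\RR)$; consequently the third term of \eqref{eq:PTdual_e} sits in degrees $0,1$, while the middle term is concentrated in degree~$1$. The associated long exact sequence in cohomology forces $H^i(C_S^{\epsilon})=0$ for $i\notin\{1,2\}$, proving $C_S^{\epsilon}\in D^{[1,2]}(\RR)$, and identifies $H^2(C_S^{\epsilon})$ as the cokernel of
\[
D_p^{\epsilon}\oplus \bigoplus_{v\in\Sigma_0\setminus S}H^1_{\Iw}(K_{\infty,v},T_pE)\longrightarrow H^1(K_{\infty,\Sigma}/K_{\infty},E[p^{\infty}])^{\vee}.
\]
Pontryagin-dualizing and comparing with Definition~\ref{defn:CM_35}, this cokernel is precisely $\dSel{\epsilon}{S}$, using the standard fact that, since $\Sigma$ contains all primes of bad reduction together with $S$, $S_p(F)$, and $S_{\infty}(F)$, the Selmer group may be computed using $H^1(K_{\infty,\Sigma}/K_{\infty},E[p^{\infty}])$.

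For the Euler characteristic, I use additivity of $\chi_{\RR}$ along the defining triangle of Definition~\ref{defn:C_S_e}. The global Iwasawa complex has a known Euler characteristic (Tate's formula, cf.~\cite[\S 3.1]{Kata_12}); each local complex $\RG_{\Iw}(K_{\infty,v},T_pE)$ for $v\nmid p$ has Euler characteristic $0$ by local duality; and \eqref{eq:Drank} together with Proposition~\ref{prop:CM_33} supplies the rank of $D_p^{\epsilon}$. Bookkeeping then shows that the contributions outside $S_p^{\ssr}$ cancel, leaving the claimed combinatorial expression.

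For part~(2), I proceed in two steps. First I establish the vanishing $H^1(C_S^{\epsilon'})=0$ when $\epsilon'$ is itself a multi-sign satisfying Assumption~\ref{ass:gen_tor}. The long exact sequence of \eqref{eq:PTdual_e} gives
\[
0\to H^2(K_{\infty,\Sigma}/K_{\infty},E[p^{\infty}])^{\vee}\to H^1(C_S^{\epsilon'})\to D_p^{\epsilon'}\oplus\bigoplus_{v\in\Sigma_0\setminus S}H^1_{\Iw}(K_{\infty,v},T_pE)\to H^1(K_{\infty,\Sigma}/K_{\infty},E[p^{\infty}])^{\vee},
\]
and the target of the third arrow is $\RR$-torsion-free (freeness of $D_p^{\epsilon'}$ by Proposition~\ref{prop:CM_33}, and projective dimension $\le 1$ hence torsion-freeness of the local terms by Lemma~\ref{lem:local_cohom}). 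Combining $\chi_{\RR}(C_S^{\epsilon'})=0$ from part~(1) with the torsion hypothesis on $H^2(C_S^{\epsilon'})=\dSel{\epsilon'}{S}$, a rank count shows that $H^1(C_S^{\epsilon'})$ has generic rank zero, so its image in the torsion-free module vanishes and $H^1(C_S^{\epsilon'})\simeq H^2(K_{\infty,\Sigma}/K_{\infty},E[p^{\infty}])^{\vee}$. The vanishing of the latter is an input of weak-Leopoldt type, supplied by the references cited just after Definition~\ref{defn:CM_37}.

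The second step propagates from $\epsilon'$ to a general $\epsilon\ge\epsilon'$. Applying the octahedral axiom to the defining triangles of $C_S^{\epsilon}$ and $C_S^{\epsilon'}$, both fibres over the common complex $\RG_{\Iw}(K_{\infty,\Sigma}/K_{\infty},T_pE)$, identifies the cofibre of the natural morphism $C_S^{\epsilon}\to C_S^{\epsilon'}$ with $(D_p^{\epsilon'}/D_p^{\epsilon})[-1]$, which is concentrated in degree~$1$. The associated long exact sequence then provides the injection $H^1(C_S^{\epsilon})\hookrightarrow H^1(C_S^{\epsilon'})=0$, whence $H^1(C_S^{\epsilon})=0$.

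The main obstacle I anticipate is the first step of part~(2): carrying out the rank argument that forces the image of $H^1(C_S^{\epsilon'})$ in the torsion-free module to vanish, and in particular the need to invoke an appropriate form of weak Leopoldt (vanishing of $H^2(K_{\infty,\Sigma}/K_{\infty},E[p^{\infty}])$) valid in our general abelian $p$-adic Lie setting.
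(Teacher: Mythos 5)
Your treatment of part (1) and of the comparison step in part (2) (the octahedral argument giving the injection $H^1(C_S^{\epsilon}) \hookrightarrow H^1(C_S^{\epsilon'})$) is correct and matches the paper's route. However, the first step of your part (2) contains a genuine gap.

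After establishing via \eqref{eq:PTdual_e} that $H^1(C_S^{\epsilon'})$ is torsion (from $\chi_{\RR}=0$ and the torsion hypothesis on $H^2$), you conclude that its image in the torsion-free target vanishes, and hence $H^1(C_S^{\epsilon'}) \simeq H^2(K_{\infty,\Sigma}/K_{\infty},E[p^{\infty}])^{\vee}$. You then want to finish by invoking vanishing of that $H^2$ (weak Leopoldt). The problem is that this vanishing is \emph{not} supplied by the references you point to: those establish $\pd_{\RR}(\dSel{\epsilon'}{S}) \le 1$ only for $F=\Q$ or for $K_\infty$ equal to the cyclotomic $\Z_p$-extension, and they do not assert $H^2(K_{\infty,\Sigma}/K_{\infty},E[p^{\infty}])=0$ in the generality of this paper (arbitrary abelian $p$-adic Lie extension of an arbitrary base field). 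So as written, your argument appeals to an unproved input.

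The extra input is in fact unnecessary, and this is precisely what the paper's much shorter argument exploits. Since $C_S^{\epsilon'} \in D^{[1,2]}(\RR)$, one may represent it by a two-term complex $[C^1 \to C^2]$ of finitely generated projective $\RR$-modules, so $H^1(C_S^{\epsilon'}) = \Ker(C^1 \to C^2)$ is a submodule of a projective module and therefore \emph{torsion-free} over $\RR$ (equivalently over the regular local ring $\Lambda \subset \RR$). Combined with your rank count showing that $H^1(C_S^{\epsilon'})$ is torsion, this gives $H^1(C_S^{\epsilon'})=0$ directly, with no appeal to weak Leopoldt. You derive exactly the torsion statement needed but then detour through the PT-dual identification rather than observing torsion-freeness of $H^1$ itself. (As a side remark, once one has $H^1(C_S^{\epsilon'})=0$ by the simpler route, your identification $H^1(C_S^{\epsilon'}) \simeq H^2(K_{\infty,\Sigma}/K_{\infty},E[p^{\infty}])^{\vee}$ shows that the weak-Leopoldt vanishing is a \emph{consequence} of Assumption~\ref{ass:gen_tor} here, rather than something one must input.)
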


\begin{proof}
(1)
By $E(K_{\infty})[p] = 0$, we have $\RG_{\Iw}(K_{\infty, \Sigma}/K_{\infty}, T_pE) \in D^{[1, 2]}(\RR)$.
Then by Proposition \ref{prop:CM_33} and Lemma \ref{lem:local_cohom}, we obtain $C_S^{\epsilon} \in D^{[1, 2]}(\RR)$.
By \eqref{eq:PTdual_e}, we immediately obtain $H^2(C_S^{\epsilon}) \simeq \dSel{\epsilon}{S}$.
The formula about $\chi_{\RR}(C_{S}^{\epsilon})$ follows from a standard application of Euler-Poincare characteristic formulas (e.g., \cite[(7.3.1), (8.7.4)]{NSW08}), together with the formula \eqref{eq:Drank}.

(2)
For the given $\epsilon'$, we have $\chi_{\RR}(C_S^{\epsilon'}) = 0$ by (1) and so $H^1(C_S^{\epsilon'}) = 0$ as $H^2(C_S^{\epsilon'}) \simeq \dSel{\epsilon'}{S}$ is torsion.
Since we have an injective homomorphism $H^1(C_S^{\epsilon}) \hookrightarrow H^1(C_S^{\epsilon'})$, we obtain the claim.
\end{proof}

We now obtain a reformulation of the definition of the algebraic $p$-adic $L$-functions (Definition \ref{defn:CM_37}).

\begin{prop}\label{prop:CM_cohom_L}
Let $\epsilon$ be a multi-sign satisfying Assumption \ref{ass:gen_tor}.
Then we have $H^1(C_S^{\epsilon}) = 0$, $\pd_{\RR}(\dSel{\epsilon}{S}) \leq 1$, and 
\[
\iDet_{\RR}(C_{S}^{\epsilon}) = (\LL_{S}^{\epsilon}).
\]
\end{prop}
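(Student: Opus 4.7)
The plan is to derive each of the three assertions directly from Proposition \ref{prop:C_S_epsilon}, Proposition \ref{prop:75}, and the definition of $\LL_S^{\epsilon}$. Since $\epsilon$ is a multi-sign, it lies in $\prod_{\pe \in S_p^{\ssr}}\{+, -, \rel\}$, and Assumption \ref{ass:gen_tor} holds for $\epsilon' = \epsilon \leq \epsilon$ by hypothesis, so Proposition \ref{prop:C_S_epsilon}(2) immediately gives $H^1(C_S^{\epsilon}) = 0$.

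Next, I would deduce the projective dimension bound from this vanishing together with the fact that $C_S^{\epsilon} \in D^{[1,2]}(\RR)$ is perfect. Concretely, $C_S^{\epsilon}$ admits a representative $[P^1 \to P^2]$ with $P^1, P^2$ finitely generated projective $\RR$-modules in degrees $1$ and $2$. The hypothesis $H^1(C_S^{\epsilon}) = 0$ means this differential is injective, so we obtain a short exact sequence
\[
0 \to P^1 \to P^2 \to H^2(C_S^{\epsilon}) \to 0.
\]
Combined with $H^2(C_S^{\epsilon}) \simeq \dSel{\epsilon}{S}$ from Proposition \ref{prop:C_S_epsilon}(1), this yields a length-one projective resolution of $\dSel{\epsilon}{S}$, i.e.\ $\pd_{\RR}(\dSel{\epsilon}{S}) \leq 1$.

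Finally, for the determinant identity, I would invoke Proposition \ref{prop:75}. Its hypotheses are satisfied: $C_S^{\epsilon} \in D^{[1,2]}(\RR)$ with $H^1(C_S^{\epsilon}) = 0$, and $H^2(C_S^{\epsilon}) \simeq \dSel{\epsilon}{S}$ is $\RR$-torsion by Assumption \ref{ass:gen_tor}. Therefore
\[
\iDet_{\RR}(C_S^{\epsilon}) = \Fitt_{\RR}(H^2(C_S^{\epsilon})) = \Fitt_{\RR}(\dSel{\epsilon}{S}) = (\LL_S^{\epsilon}),
\]
where the last equality is the defining property of $\LL_S^{\epsilon}$ in Definition \ref{defn:CM_37}.

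There is no genuine obstacle; the proposition is essentially a packaging of earlier results. The only subtle point worth flagging is verifying that the hypotheses of the cited statements actually apply: one must check that $\epsilon$ (a multi-sign) indeed belongs to the index set $\{+, -, \rel\}^{S_p^{\ssr}}$ appearing in Proposition \ref{prop:C_S_epsilon}(2), and that the torsion hypothesis needed for Proposition \ref{prop:75} follows from Assumption \ref{ass:gen_tor}. Both checks are immediate, so the proof reduces to a concatenation of citations.
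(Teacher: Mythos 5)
Your proof is correct and follows essentially the same route as the paper: invoke Proposition \ref{prop:C_S_epsilon}(2) for the vanishing of $H^1$, deduce the projective dimension bound from the resulting two-term projective resolution, and apply Proposition \ref{prop:75} together with Definition \ref{defn:CM_37} for the determinant identity. The paper's own proof is just a terser version of exactly this concatenation of citations.
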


\begin{proof}
As we saw in Proposition \ref{prop:C_S_epsilon}, we have $H^1(C_S^{\epsilon}) = 0$ and so $\pd_{\RR}(H^2(C_S^{\epsilon})) \leq 1$.
The displayed formula follows from Proposition \ref{prop:75} applied to $C_{S}^{\epsilon}$.
\end{proof}

\section{The proof of the first main result}\label{sec:pf_main}

In this section, we prove Theorem \ref{thm:main} and Corollary \ref{cor:l=1}.

\subsection{The proof of Theorem \ref{thm:main}}\label{subsec:CM_15}

We mimic the proof of \cite[Theorem 5.3]{Kata_12}.
We first construct a key diagram in Proposition \ref{prop:CM_36}, and then apply the snake lemma.

As in \S \ref{subsec:main1}, let us take multi-signs $\epsilon_1, \dots, \epsilon_n$ and define $\ol{\epsilon}$ and $\ul{\epsilon}$.
For each $1 \leq i \leq n$, we define a multi-index $\delta_i = (\delta_{i, \pe})_{\pe} \in \prod_{\pe \in S_p^{\ssr}} \{+, -, \rel\}$ by 
\[
\delta_{i, \pe} = 
	\begin{cases}
	\epsilon_{i, \pe} & (\text{if $\ol{\epsilon}_{\pe} = \rel$})\\
	\rel & (\text{if $\ol{\epsilon}_{\pe} = \epsilon_{i, \pe}$}).
	\end{cases}
\]
Equivalently, $\delta_i$ is the maximum element such that $\inf \{\delta_i, \ol{\epsilon} \} = \epsilon_i$.
Using the notation in \S \ref{sec:CM_14}, we define
\[
\fD_i 
= D_{S_p^{\ssr}}^{\delta_i}.
\]
This is consistent with the definition of $\fD_i$ in \S \ref{subsec:main1}.
Then sequence \eqref{eq:DSS} follows from the definition of the Selmer groups.
Moreover, by Proposition \ref{prop:CM_33}, the module $\fD_i$ is {\it free} of rank $l$ over $\RR$.

By Definition \ref{defn:C_S_e}, we have a triangle
\begin{equation}\label{eq:DCC}
\fD_i[-2]
\to C_S^{\ol{\epsilon}} \to C_S^{\epsilon_i},
\end{equation}
the long exact sequence of which gives an exact sequence
\begin{equation}\label{eq:DSS2}
0 \to \fD_i
\to \dSel{\ol{\epsilon}}{S}
\to \dSel{\epsilon_i}{S}
\to 0.
\end{equation}
Here, the injectivity of the map $\fD_i \to \dSel{\ol{\epsilon}}{S}$ follows from Assumption \ref{ass:gen_tor} for $\epsilon_i$ and Proposition \ref{prop:C_S_epsilon}.
Then \eqref{eq:DSS2} implies that $\dSel{\ol{\epsilon}}{S}$ is generically of rank $l$ over $\RR$.

The key diagram is the following.

\begin{prop}\label{prop:CM_36}
We have a commutative diagram
\[
\xymatrix{
	\bigoplus_{i=1}^n \bigwedge_{\RR}^l \fD_i \ar[r]^-{\simeq} \ar[d]_{f_1}
	& \bigoplus_{i=1}^n \Det_{\RR} (\fD_i) \ar[d]^{f_2} \\
	\bigwedge_{\RR}^l \dSel{\ol{\epsilon}}{S} \ar[r]_-{\Psi}
	& \Det_{\RR}(C_{S}^{\ol{\epsilon}})
}
\]
which satisfies the following properties:
\[
\Ker(\Psi) = \parenth{\bigwedge_{\RR}^l \dSel{\ol{\epsilon}}{S}}_{\tor},
\qquad
\Coker(\Psi) \simeq {\RR \over \Fitt_{\RR} \parenth{E^1(\dSel{\ol{\epsilon}}{S})}},
\]
and
\[
\Coker(f_2) \simeq \frac{\RR}{\sum_{i=1}^n(\LL_{S}^{\epsilon_i})}.
\]
\end{prop}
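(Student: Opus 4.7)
My plan is to apply Proposition~\ref{prop:62} to the shifted complex $C_S^{\ol{\epsilon}}[1]$ and then glue with the determinant isomorphism coming from the triangle \eqref{eq:DCC}. First, I would verify the hypotheses of Proposition~\ref{prop:62} for $C := C_S^{\ol{\epsilon}}[1] \in D^{[0,1]}(\RR)$: vanishing of $H^0(C) = H^1(C_S^{\ol{\epsilon}})$ follows from Proposition~\ref{prop:C_S_epsilon}(2) applied with any one $\epsilon_i$ (Assumption~\ref{ass:gen_tor} holds), and the Euler characteristic equals $\chi_\RR(C) = -\chi_\RR(C_S^{\ol{\epsilon}}) = l$ by Proposition~\ref{prop:C_S_epsilon}(1), since $\ol{\epsilon}$ has exactly $l$ components equal to $\rel$ and none in $\{0,1\}$. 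Proposition~\ref{prop:62} then produces
\[
\Psi: \bigwedge_{\RR}^{l} \dSel{\ol{\epsilon}}{S} \longrightarrow \Det_{\RR}^{-1}(C_S^{\ol{\epsilon}}[1]) = \Det_{\RR}(C_S^{\ol{\epsilon}})
\]
with the two advertised descriptions of $\Ker(\Psi)$ and $\Coker(\Psi)$.

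The top horizontal map is the tautological identification $\bigwedge_{\RR}^l \fD_i = \Det_\RR(\fD_i)$ available because $\fD_i$ is $\RR$-free of rank $l$ by Proposition~\ref{prop:CM_33}. For $f_1$, I would take the $l$-th exterior power of the inclusions $\fD_i \hookrightarrow \dSel{\ol{\epsilon}}{S}$ coming from \eqref{eq:DSS2}. For $f_2^{(i)}$, the triangle \eqref{eq:DCC} induces a canonical determinant isomorphism
\[
\Det_\RR(C_S^{\ol{\epsilon}}) \simeq \Det_\RR(\fD_i[-2]) \otimes_\RR \Det_\RR(C_S^{\epsilon_i}) = \Det_\RR(\fD_i) \otimes_\RR \Det_\RR(C_S^{\epsilon_i}).
\]
By Proposition~\ref{prop:CM_cohom_L}, $\iota_{C_S^{\epsilon_i}}$ identifies $\Det_\RR^{-1}(C_S^{\epsilon_i})$ with $(\LL_S^{\epsilon_i}) \subset \RR$; dually $\Det_\RR(C_S^{\epsilon_i})$ embeds in $Q(\RR)$ as the fractional ideal $(\LL_S^{\epsilon_i})^{-1} \supset \RR$. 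Let $\theta_i \in \Det_\RR(C_S^{\epsilon_i})$ be the element corresponding to $1 \in \RR$, and define $f_2^{(i)}(x) = x \otimes \theta_i$. The inclusion $\RR \hookrightarrow (\LL_S^{\epsilon_i})^{-1}$ has cokernel $\RR/(\LL_S^{\epsilon_i})$, so $\Coker(f_2^{(i)}) \simeq \Det_\RR(\fD_i) \otimes \RR/(\LL_S^{\epsilon_i}) \simeq \RR/(\LL_S^{\epsilon_i})$.

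To identify $\Coker(f_2)$, I would use that $\Det_\RR(C_S^{\ol{\epsilon}})$ is an invertible $\RR$-module and that $\RR$ decomposes as a finite product of Iwasawa algebras of the form $\OO[[T_1, \dots, T_d]]$, which are UFDs with trivial Picard group; hence $\Det_\RR(C_S^{\ol{\epsilon}}) \simeq \RR$. After fixing such an identification, each $f_2^{(i)}$ becomes multiplication by an element generating the ideal $(\LL_S^{\epsilon_i})$, so the image of $f_2$ equals $\sum_i (\LL_S^{\epsilon_i})$ and the asserted description of $\Coker(f_2)$ follows. Commutativity of the square reduces to the naturality built into the $\Psi$-construction: applying Proposition~\ref{prop:62} to the auxiliary complex $\fD_i[-1] \in D^{[0,1]}(\RR)$ (which has $H^0 = 0$ and $\chi_\RR = l$) yields $\Psi_{\fD_i[-1]}: \bigwedge^l \fD_i \to \Det_\RR^{-1}(\fD_i[-1]) = \Det_\RR(\fD_i)$, which is tautologically the identity; since both $\Psi \circ f_1^{(i)}$ and $f_2^{(i)}$ restrict, after tensoring with $Q(\RR)$ and applying the triangle-induced trivialization $Q(\RR) \otimes \Det_\RR(C_S^{\epsilon_i}) \simeq Q(\RR)$, to the same canonical base-changed map, and since $\Det_\RR(C_S^{\ol{\epsilon}})$ injects into its $Q(\RR)$-localization by projectivity, the two composites coincide.

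The main subtlety I anticipate is verifying commutativity rigorously: although $\Psi_C$ is characterized by the base-change lift property, confirming that the distinguished element $\theta_i$ and the triangle-induced determinant isomorphism interact coherently with this lift requires carefully unpacking the construction of $\Psi$ in the proof of \cite[Proposition 2.1]{Kata_12} and tracking signs through the shift $[1]$. The remaining ingredient, freeness of $\Det_\RR(C_S^{\ol{\epsilon}})$, is mild but must be stated attentively in the equivariant setting where $\RR$ is a product of regular local rings rather than a single one.
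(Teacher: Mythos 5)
Your proposal takes essentially the same approach as the paper's proof: $\Psi$ is obtained from Proposition~\ref{prop:62} applied to $C_S^{\ol{\epsilon}}[1]$, $f_1$ is the exterior power of the inclusions coming from \eqref{eq:DSS2}, $f_2$ is built from the determinant isomorphism of the triangle \eqref{eq:DCC} together with $\iota_{C_S^{\epsilon_i}}$ so that $\Coker(f_2^{(i)}) \simeq \RR/(\LL_S^{\epsilon_i})$, and commutativity follows from the $Q(\RR)$-base-change characterization of $\Psi$. One minor slip in your trivialization step: when $p$ divides the order of the torsion part of $\Gal(K_\infty/F)$, $\RR$ is \emph{not} a finite product of regular local rings (hence not a product of power-series UFDs), so the ``trivial Picard group of a UFD'' reasoning does not literally apply; instead, one should observe that $\RR$ is in all cases a finite product of complete local rings, over which every invertible module is free of rank one. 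This yields the same conclusion $\Det_{\RR}(C_S^{\ol{\epsilon}}) \simeq \RR$, and hence the stated description of $\Coker(f_2)$; the paper's proof leaves this trivialization implicit.
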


\begin{proof}
We first construct the diagram.
We define the upper horizontal arrow as the tautological map.
By Proposition \ref{prop:C_S_epsilon}, we can apply Proposition \ref{prop:62} to $C_S^{\ol{\epsilon}}[1]$ (note that $\chi_{\RR}(C_S^{\ol{\epsilon}}[1]) = l$).
As a result we construct the map $\Psi$ in the lower horizontal arrow.
The map $f_1$ is the natural one (see \eqref{eq:DSS2}).
The map $f_2$ is defined by
\begin{equation}\label{eq:CM_40}
\Det_{\RR}(\fD_i) 
\simeq \Det_{\RR}(C_{S}^{\ol{\epsilon}}) \otimes \Det_{\RR}^{-1}(C_{S}^{\epsilon_i})
\hookrightarrow \Det_{\RR}(C_{S}^{\ol{\epsilon}}),
\end{equation}
where the isomorphism comes from the triangle \eqref{eq:DCC} and the injective map is induced by the map $\iota_{C_{S}^{\epsilon_i}}$ introduced before Proposition \ref{prop:75}.
By the constructions, it is easy to show that the diagram is commutative.

We now show the claimed properties.
The descriptions of the kernel and the cokernel of $\Psi$ are just fundamental properties in general.
Since the cokernel of \eqref{eq:CM_40} is isomorphic to 
\[
\RR/ \iDet(C_{S}^{\epsilon_i}) = \RR/( \LL_{S}^{\epsilon_i})
\]
by Proposition \ref{prop:CM_cohom_L}, we have the description of $\Coker(f_2)$.
\end{proof}

\begin{proof}[Proof of Theorem \ref{thm:main}]
By Proposition \ref{prop:CM_36}, we have a commutative diagram
\[
\xymatrix{
	&\bigoplus_{i=1}^n \bigwedge_{\RR}^l \fD_i \ar[r]^-{\simeq} \ar[d]_{f_1}
	& \bigoplus_{i=1}^n \Det_{\RR} (\fD_i) \ar[d]^{f_2} &&\\
	0 \ar[r]
	& \parenth{\bigwedge_{\RR}^l \dSel{\ol{\epsilon}}{S}}_{/\tor} \ar[r]_-{\Psi}
	& \Det_{\RR}(C_{S}^{\ol{\epsilon}}) \ar[r]
	& \Coker(\Psi) \ar[r]
	& 0
}
\]
with the lower sequence exact.
Now Theorem \ref{thm:main} follows immediately from the snake lemma applied to this diagram, by using the properties claimed in Proposition \ref{prop:CM_36}.
\end{proof}

\subsection{The proof of Corollary \ref{cor:l=1}}\label{subsec:pf_l=1}

We first show a general proposition.

\begin{prop}\label{prop:PN_tor}
Let $\RR$ be a ring as in \S \ref{subsec:24}.
Let $M$ be a finitely generated $\RR$-module with $\pd_{\RR}(M) \leq 1$.
We put $N = E^1(M)$.
Then the following hold.
\begin{itemize}
\item[(1)]
$M$ is torsion-free over $\RR$ if and only if $N$ is pseudo-null.
\item[(2)]
We suppose the equivalent conditions in (1) and moreover that the generic rank of $M$ over $\RR$ is one.
Then we have an (abstract) isomorphism
\[
\RR/\Fitt_{\RR}(N) \simeq E^2(N).
\]
\end{itemize}
\end{prop}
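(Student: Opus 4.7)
\emph{Proof plan.} My strategy for both parts is to apply $\RHom(-,\RR)$ twice to a length-one projective resolution $0 \to P_1 \to P_0 \to M \to 0$ of $M$ and read off the resulting long exact sequence. Since $M$ is perfect, $\RHom(\RHom(M,\RR),\RR) \simeq M$, and the complex $\RHom(M,\RR) \simeq [P_0^* \to P_1^*]$ has cohomology $M^* = E^0(M)$ in degree $0$ and $N = E^1(M)$ in degree $1$. Dualising the canonical truncation triangle $M^* \to \RHom(M,\RR) \to N[-1]$ yields a triangle $\RHom(N,\RR)[1] \to M \to \RHom(M^*,\RR)$, whose long exact sequence of cohomology collapses to
\[
0 \to E^1(N) \to M \to M^{**} \to E^2(N) \to 0, \qquad E^0(N) = 0.
\]

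For part (1), I would first note that $N$ is always torsion: at any minimal prime $\eta$ of $\RR$, the local ring $\RR_\eta$ is Artinian local, so any $\RR_\eta$-module of finite projective dimension is free by Auslander--Buchsbaum, and hence $M_\eta$ is free and $N_\eta = 0$. Because $\RR$ is Cohen--Macaulay (finite free over the regular local $\Lambda$), for a torsion $\RR$-module pseudo-nullity is equivalent to $E^1 = 0$ (both characterise grade $\geq 2$). On the other hand, $M^{**}$ is torsion-free as a dual, so $\Ker(M \to M^{**}) = M_{\tor}$, and comparing with the displayed sequence yields $E^1(N) \simeq M_{\tor}$. Thus $M$ is torsion-free iff $E^1(N) = 0$ iff $N$ is pseudo-null.

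For part (2), the hypothesis $M_{\tor} = 0$ reduces the sequence to $0 \to M \to M^{**} \to E^2(N) \to 0$, so $E^2(N) \simeq M^{**}/M$. I would then apply Proposition~\ref{prop:62} to the complex $C = [P_1 \to P_0]$ placed in degrees $0,1$: torsion-freeness gives $H^0(C) = 0$, $H^1(C) = M$, and $\chi_\RR(C) = \rank_\RR P_0 - \rank_\RR P_1 = 1 = l$, which matches the generic rank of $M$. The proposition produces an injection $\Psi_C \colon M \hookrightarrow \Det_\RR^{-1}(C)$ with cokernel $\RR/\Fitt_\RR(N)$. To finish I would identify $\Det_\RR^{-1}(C)$ with $M^{**}$ compatibly with the embeddings of $M$: the target $\Det_\RR^{-1}(C)$ is invertible of rank one, hence reflexive, while the cokernel $\RR/\Fitt_\RR(N)$ is pseudo-null because $\sqrt{\Fitt_\RR(N)} = \sqrt{\Ann_\RR(N)}$ has height $\geq 2$. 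Dualising $0 \to M \to \Det_\RR^{-1}(C) \to \RR/\Fitt_\RR(N) \to 0$ and using that both $E^0$ and $E^1$ vanish on pseudo-null modules gives $M^* \simeq (\Det_\RR^{-1}(C))^*$, and reflexivity then yields a compatible isomorphism $\Det_\RR^{-1}(C) \simeq M^{**}$; comparing cokernels delivers $\RR/\Fitt_\RR(N) \simeq E^2(N)$.

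The main obstacle I anticipate is this compatible identification $\Det_\RR^{-1}(C) \simeq M^{**}$ in part (2). Proposition~\ref{prop:62} outputs $\Coker \Psi_C$ only as an abstract $\RR$-module, and recognising it as $M^{**}/M = E^2(N)$ rests on the uniqueness of the reflexive hull of $M$ among rank-one reflexive overmodules with pseudo-null cokernel. This uniqueness relies essentially on the self-duality hypothesis $\Hom_\Lambda(\RR,\Lambda) \simeq \RR$ from \S\ref{subsec:24}, which, combined with the Cohen--Macaulay property of $\RR$, guarantees the vanishing of $E^0$ and $E^1$ on pseudo-null modules. The remaining steps---the Auslander--Buchsbaum argument showing $N$ is torsion, the identification $\Ker(M \to M^{**}) = M_{\tor}$, and the Euler-characteristic count---are routine once this homological framework is in place.
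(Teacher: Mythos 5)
Your proof is correct but takes a genuinely different route from the paper's. For part (1) the paper localizes at height-one primes $\qu$ of $\Lambda$, uses that $\Lambda_\qu$ is a DVR (so $M_\qu$ is torsion-free iff free iff $\Ext^1_{\Lambda_\qu}(M_\qu, \Lambda_\qu) = 0$), and reads off the equivalence prime by prime; you instead extract the four-term sequence $0 \to E^1(N) \to M \to M^{**} \to E^2(N) \to 0$ together with $E^0(N) = 0$ from biduality and identify $E^1(N) \simeq M_{\tor}$ via $\Ker(M \to M^{**}) = M_{\tor}$. (Your separate Artinian-local argument that $N$ is torsion is already implied by $E^0(N) = 0$, but does no harm.) For part (2) the paper dualizes a two-term free resolution of $M$, quotes \cite[Proposition 2.4]{Kata_12} to conclude that $M^*$ is free of rank one, deduces that $E^2(N)$ is a cyclic quotient of $M^{**}$, and then matches Fitting ideals via \cite[Proposition A.2(2)]{Kata_12}. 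You instead feed the same resolution into Proposition~\ref{prop:62} to obtain $M \hookrightarrow \Det_\RR^{-1}(C)$ with cokernel $\RR/\Fitt_\RR(N)$, and then pin down $\Det_\RR^{-1}(C)$ as the reflexive hull $M^{**}$ by dualizing twice, using that $E^0$ and $E^1$ vanish on the pseudo-null cokernel. The obstacle you flag---the compatible identification $\Det_\RR^{-1}(C) \simeq M^{**}$---is indeed the crux, and your dualize-twice argument closes it: the map $(\Det_\RR^{-1}(C))^* \to M^*$ is an isomorphism, hence so is $M^{**} \to (\Det_\RR^{-1}(C))^{**} \simeq \Det_\RR^{-1}(C)$, and this square commutes with the canonical map $M \to M^{**}$ by naturality of biduality. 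Both arguments are sound; yours is more in the spirit of the determinant formalism used for Theorem~\ref{thm:main}, while the paper's proof of (2) is a touch more elementary once the free-resolution observation is in place.
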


\begin{proof}
(1)
We regard $M$ as a module over $\Lambda$ by the forgetful functor.
Since $\pd_{\Lambda}(M) \leq 1$, we have $M_{\PN} = 0$.
This implies that $M$ is torsion-free if and only if $M_{\qu}$ is torsion-free over $\Lambda_{\qu}$ for every height one prime ideal $\qu$ of $\Lambda$.
Since $\Lambda_{\qu}$ is a discrete valuation ring, $M_{\qu}$ is torsion-free if and only if $M_{\qu}$ is free.
On the other hand, $N = E^1(M)$ is pseudo-null if and only if 
\[
\Ext^1_{\Lambda_{\qu}}(M_{\qu}, \Lambda_{\qu}) \simeq \Ext^1_{\Lambda}(M, \Lambda)_{\qu}
\]
vanishes, that is, $M_{\qu}$ is free over $\Lambda_{\qu}$ for any $\qu$ as above.
These observations imply the claim.

(2)
We first show that $E^2(N)$ is a cyclic module over $\RR$.
We take an exact sequence
\[
0 \to F \to F' \to M \to 0
\]
 with $F$ and $F'$ free modules over $\RR$ of finite ranks.
By $\rank_{\RR}(M) = 1$, we have $\rank_{\RR}(F') = \rank_{\RR}(F) + 1$.
We obtain an exact sequence
\[
0 \to M^* \to (F')^* \to F^* \to N \to 0,
\]
where in general we put $(-)^* = \Hom_{\RR}(-, \RR)$.
By the assumption that $N$ is pseudo-null, applying \cite[Proposition 2.4]{Kata_12}, we deduce that $M^*$ is a {\it free} module over $\RR$ of rank one (alternatively one may apply \cite[Proposition 3.1(c)]{Kata_13}).
Thus the above sequence is a free resolution of $N$.
Note that this implies $\pd_{\RR}(N) \leq 2$.
Moreover, the module $E^2(N)$ is a quotient of $M^{**}$, which is again free of rank one.
Therefore, $E^2(N)$ is a cyclic, as claimed.

This observation implies that we have an isomorphism $E^2(N) \simeq \RR / \Fitt_{\RR}(E^2(N))$.
On the other hand, since $N$ is pseudo-null and $\pd_{\RR}(N) \leq 2$, we have $\Fitt_{\RR}(E^2(N)) = \Fitt_{\RR}(N)$ by \cite[Proposition A.2(2)]{Kata_12}.
This completes the proof.
\end{proof}

\begin{proof}[Proof of Corollary \ref{cor:l=1}]
We have only to prove claim (2).
By Proposition \ref{prop:C_S_epsilon}, we have $\pd_{\RR}(\dSel{\ol{\epsilon}}{S}) \leq 1$.
Moreover, the generic rank of $\dSel{\ol{\epsilon}}{S}$ is $l = 1$.
Hence the corollary immediately follows from Proposition \ref{prop:PN_tor} applied to $M = \dSel{\ol{\epsilon}}{S}$.
\end{proof}

\section{The proof of the second main result}\label{sec:pf_E1}

In this section, we prove Theorem \ref{thm:E1}.

\subsection{The self-duality}\label{subsec:dual}

We keep the notations in \S \S \ref{subsec:local_settings}--\ref{subsec:conseq_str}, so
$E/\Q_p$ is a supersingular elliptic curve satisfying $a_p(E) = 0$.

The goal of this subsection is to prove the self-duality of $D_g^{\pm}$ under local duality (Proposition \ref{prop:D_inf_orth}).
The self-duality is stated in \cite[Proposition 7.11]{LP19} in the situation they deal with, but the proof has a flaw; we cannot make use of anti-symmetricity.
Instead, we adapt the argument of B.~D.~Kim \cite[Proposition 3.15]{Kim07} (a more general statement is given in \cite[Theorem 2.9]{Kim14}).
The author thanks Antonio Lei and Bharathwaj Palvannan for answering relevant questions.

We first fix notations on various pairings.
Let $f$ be a positive integer.
For each $n \geq -1$ and $j \geq 0$, we have the (perfect) local Tate pairing
\[
[-, -]_{f, n}^j: H^1(\Q_{p^f, n}, E[p^j]) \times H^1(\Q_{p^f, n}, E[p^j]) \to \Z/p^j \Z.
\]
This pairing is induced by the Weil pairing $E[p^j] \times E[p^j] \to \mu_{p^j}$.
Then we define a pairing
\[
\lra{-, -}_{f, n}^j: H^1(\Q_{p^f, n}, E[p^j])^{\iota} \times H^1(\Q_{p^f, n}, E[p^j]) \to R_{f, n}/p^j
\]
by
\[
\lra{x, y}_{f, n}^j = \sum_{\sigma \in \Gal(\Q_{p^f, n}/\Q_p)} [x^{\sigma}, y]_{f, n}^j \sigma
\]
for $x \in H^1(\Q_{p^f, n}, E[p^j])^{\iota}$ and $y \in H^1(\Q_{p^f, n}, E[p^j])$.
Note that $\lra{-, -}_{f, n}^j$ is $R_{f, n}/p^j$-bilinear because $[-, -]_{f, n}^j$ is Galois invariant.
Moreover, the pairing $\lra{-, -}_{f, n}^j$ is compatible with respect to $f$, $n$, and $j$.
Therefore, by taking the limit, we obtain a pairing
\[
\langle -, - \rangle_g: H^1_{\Iw}(\Q_{p^g, \infty}, T_pE)^{\iota} \times H^1_{\Iw}(\Q_{p^g, \infty}, T_pE) \to \RR_g
\]
for a supernatural number $g$.
This is nothing but the perfect pairing describing the local Tate duality (cf.~\eqref{eq:Tate_dual} below).

On the other hand, $[-, -]_{f, n}^j$ also induces a perfect pairing
\[
[-, -]_{g}: H^1_{\Iw}(\Q_{p^g, \infty}, T_pE) \times H^1(\Q_{p^g, \infty}, E[p^{\infty}]) \to \Q_p/\Z_p.
\]
Now we consider the submodule $D_g^{\pm}$ of $H^1(\Q_{p^g, \infty}, E[p^{\infty}])^{\vee} \simeq H^1_{\Iw}(\Q_{p^g, \infty}, T_pE)$ defined in \S \ref{subsec:conseq_str}.
By the definition, $D_g^{\pm}$ is the exact annihilator of $E^{\pm}(\Q_{p^g, \infty}) \otimes (\Q_p/\Z_p)$ with respect to $[-, -]_{g}$.

\begin{prop}\label{prop:D_inf_orth}
For a supernatural number $g$  and a choice of $\pm$, we have 
\[
\lra{(D_g^{\pm})^{\iota}, D_g^{\pm}}_g = 0.
\]
\end{prop}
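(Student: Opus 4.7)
The approach is to adapt the finite-level argument of B.~D.~Kim \cite[Proposition~3.15]{Kim07} (see also \cite[Theorem~2.9]{Kim14}) and pass to a supernatural-level limit. First, I would unwind the pairing: by the definition of $\langle -,-\rangle_g$ as a Galois-summed local Tate pairing and the $\Gal(\Q_{p^g,\infty}/\Q_p)$-stability of $E^{\pm}(\Q_{p^g,\infty})$ (and hence of $D_g^{\pm}$), the claim reduces to showing that $D_g^{\pm}$ is isotropic under the self-pairing on $H^1_{\Iw}(\Q_{p^g,\infty},T_pE)$ induced by the Weil pairing of $T_pE$.

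Second, I would establish this self-orthogonality at every finite layer: for each $f \mid g$, integer $n \ge -1$, and $j \ge 0$, show that the Kummer image of $E^{\pm}(\Q_{p^f,n})/p^j$ inside $H^1(\Q_{p^f,n},E[p^j])$ is isotropic with respect to $[-,-]^j_{f,n}$. The strategy, following Kim, is to take $x,y \in E^{\pm}(\Q_{p^f,n})$, invoke the defining trace conditions (Definition~\ref{defn:76}), and use the functoriality of the Weil pairing under corestriction to descend $[x,y]^j_{f,n}$ inductively to a pairing at a strictly lower level where one of $x,y$ can be arranged to lie in $E(\Q_{p^f,n'-1})$; the base case is the standard self-orthogonality of the full Kummer image of $E(L)$ under local Tate duality. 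Then the finite-level isotropy passes to $D_g^{\pm}$ via Proposition~\ref{prop:D_str}(ii) and the compatibility relations~\eqref{eq:norm_d} for the local points $d_{f,n}$ of Proposition~\ref{prop:80}, which identify $D_g^{\pm}$ with the appropriate Iwasawa limit of the finite-layer Kummer images. Summing over $\sigma \in \Gal(\Q_{p^g,\infty}/\Q_p)$ then yields $\langle (D_g^{\pm})^{\iota}, D_g^{\pm}\rangle_g = 0$.

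The main obstacle is the finite-level isotropy for general $f$. The author explicitly notes that the proof of \cite[Proposition~7.11]{LP19} has a flaw — an unjustified appeal to antisymmetry — so Kim's inductive descent must be executed carefully and uniformly in $f$. In particular, one must verify that the descent step preserves $\pm$-membership at the strictly lower level so that the induction on $n$ proceeds, and that the Frobenius $\varphi$-action appearing in Proposition~\ref{prop:80}(1) enters the pairing compatibly, without introducing parity obstructions.
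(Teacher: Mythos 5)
Your overall strategy matches the paper's: reduce to finite $f$, invoke Kim's machinery at the finite level, and pass to a projective limit. The paper's proof is essentially this, except that it simply cites \cite[Theorem~2.9]{Kim14} (the generalization of \cite[Proposition~3.15]{Kim07}) rather than re-deriving the finite-level statement by an inductive trace argument.

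However, there is a genuine gap in your passage from finite-level isotropy to isotropy of $D_g^{\pm}$. Finite-level isotropy says that $(\bH_f^{\pm})^{\Gamma_n}[p^j]$ is \emph{contained in} its own annihilator; but $D_f^{\pm}$ is by definition the annihilator, so to conclude that $D_f^{\pm}$ itself is isotropic you need the stronger \emph{exact annihilator} statement, namely that $(\bH_f^{\pm})^{\Gamma_n}[p^j]$ \emph{equals} its own annihilator. That equality is a corank/cardinality count, and is precisely the content of Kim's Theorem~2.9. Your appeal to Proposition~\ref{prop:D_str}(ii) together with the compatibility relations~\eqref{eq:norm_d} to ``identify $D_g^{\pm}$ with the Iwasawa limit of the finite-layer Kummer images'' is equivalent to that exact-annihilator statement; you have not proved it, only renamed it. Furthermore, Kim's theorem requires cofreeness of $\bH_f^{\pm}$ (equivalently, by Proposition~\ref{prop:D_str}(ii), that $D_f^{\pm}$ be a direct summand), which fails exactly when $\pm = +$ and $4 \mid f$. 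The paper flags this case explicitly and treats it by a separate (sketched) argument; your proposal does not acknowledge it. Finally, the worry about executing the descent ``uniformly in $f$'' is misplaced: the paper works with one fixed $f$ at a time and handles $f$-compatibility only at the outer projective limit, where it is already provided by the compatible system $(\zeta_{(f)})_f$ of Lemma~\ref{lem:77} and equation~\eqref{eq:norm_d}.
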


\begin{proof}
First we observe that we may assume that $g = f$ is a positive integer.
This is because the pairing $\lra{-, -}_g$ and the module $D_g^{\pm}$ are both the projective limit with respect to positive integers $f \mid g$.

As in \cite[\S 3.3]{Kim07}, let us put
\[
\bH_f^{\pm} = E^{\pm}(\Q_{p^f, \infty}) \otimes (\Q_p/\Z_p) \subset H^1(\Q_{p^f, \infty}, E[p^{\infty}]),
\]
whose precise structure is described in Proposition \ref{prop:83}.
For $n \geq 0$, we put
\[
\Gamma_n = \Gal(\Q_{p^f, \infty}/\Q_{p^f, n}) \simeq \Gal(\Q_p(\mu_{p^{\infty}})/\Q_p(\mu_{p^{n+1}})).
\]

For a while, let us suppose that either $\pm = -$ or $4 \nmid f$ holds.
Then Proposition \ref{prop:83} shows that $\bH_f^{\pm}$ is a cofree module of corank one over $\RR_f$.
Therefore, we may apply \cite[Theorem 2.9]{Kim14}, which is a generalization of \cite[Proposition 3.15]{Kim07}.
As a consequence, $(\bH_f^{\pm})^{\Gamma_n}[p^j]$ is the exact annihilator of itself with respect to $[-, -]_{f, n}^j$.
Since $\bH_f^{\pm} = \varinjlim_{n, j} (\bH_f^{\pm})^{\Gamma_n}[p^j]$, by taking the limit, we obtain
\[
D_f^{\pm} = \varprojlim_{n, j} (\bH_f^{\pm})^{\Gamma_n}[p^j].
\]
On the other hand, $[(\bH_f^{\pm})^{\Gamma_n}[p^j], (\bH_f^{\pm})^{\Gamma_n}[p^j]]_{f, n}^j = 0$ also implies $\lra{(\bH_f^{\pm})^{\Gamma_n}[p^j]^{\iota}, (\bH_f^{\pm})^{\Gamma_n}[p^j]}_{f, n}^j = 0$, so taking the projective limit, we obtain $\lra{(D_f^{\pm})^{\iota}, D_f^{\pm}} = 0$ as desired.

If $\pm = +$ and $4 \mid f$, we cannot directly apply \cite[Theorem 2.9]{Kim14}.
However, by Proposition \ref{prop:83}(1), the obstruction is the failure of $\varphi + \varphi^{-1}$ to be an automorphism on $R_{f, -1}$, and can be at any rate bounded by $R_{f, -1}$ in some sense.
Moreover, the statement of the proposition is the vanishing of a submodule of $\RR_f$, so obstructions caused by torsion modules do not matter at all.
These observations enable us to modify the above argument to prove the proposition.
We omit the details, as this case is not necessary for the proof of Theorem \ref{thm:E1}.
\end{proof}

We give a corollary which will be used in the proof of Theorem \ref{thm:E1}.
Recall that $(-)^*$ denotes the linear dual.

\begin{prop}\label{prop:dual_isom}
The isomorphism
\[
H^1_{\Iw}(\Q_{p^g, \infty}, T_pE)^{\iota} 
\overset{\sim}{\to} H^1_{\Iw}(\Q_{p^g, \infty}, T_pE)^*,
\]
induced by the pairing $\lra{-, -}_g$, induces a homomorphism
\begin{equation}\label{eq:dual_hom}
{H^1_{\Iw}(\Q_{p^g, \infty}, T_pE)^{\iota} \over (D_g^{\pm})^{\iota}}
\to (D_g^{\pm})^*.
\end{equation}
Moreover, the homomorphism \eqref{eq:dual_hom} is an isomorphism if and only if either $\pm = -$ or $4 \nmid g$.
\end{prop}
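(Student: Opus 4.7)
My strategy is to study the map \eqref{eq:dual_hom} by comparing the tautological exact sequence $0 \to D \to H \to H/D \to 0$, where $H = H^1_{\Iw}(\Q_{p^g, \infty}, T_pE)$ and $D = D_g^{\pm}$, with the pairing isomorphism $\phi \colon H^{\iota} \xrightarrow{\sim} H^*$ induced by $\lra{-, -}_g$, and then split into cases according to whether $D$ is a direct summand of $H$.

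Well-definedness of \eqref{eq:dual_hom} is immediate from Proposition \ref{prop:D_inf_orth}: the orthogonality $\lra{D^{\iota}, D}_g = 0$ says that $\phi(D^{\iota})$ lies in the annihilator of $D$ in $H^*$, which by left-exactness of $\Hom_{\RR_g}(-, \RR_g)$ equals $(H/D)^*$. For the kernel and cokernel, dualizing the exact sequence $0 \to D \to H \to H/D \to 0$ and using the freeness of $H$ (Proposition \ref{prop:D_str}(i), so $\Ext^1_{\RR_g}(H, \RR_g) = 0$) yields
\[
0 \to (H/D)^* \to H^* \to D^* \to \Ext^1_{\RR_g}(H/D, \RR_g) \to 0.
\]
A diagram chase using $\phi$ and the containment $\phi(D^{\iota}) \subseteq (H/D)^*$ identifies the cokernel of \eqref{eq:dual_hom} with $\Ext^1_{\RR_g}(H/D, \RR_g)$ and its kernel with $(H/D)^*/\phi(D^{\iota})$.

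For the ``if'' direction ($\pm = -$ or $4 \nmid g$), Proposition \ref{prop:D_str}(ii) says $D$ is a direct summand of $H$, so $H/D$ is projective and the cokernel vanishes. Fix a splitting $H = D \oplus D''$ and write $\phi$ as a $2 \times 2$ matrix $(\phi_{ij})$ with respect to this decomposition; the $(1,1)$-entry $\phi_{11} \colon D^{\iota} \to D^*$ vanishes by orthogonality. After localizing at any prime $\qu$ of $\RR_g$ all modules become free, and invertibility of $\phi_{\qu}$ forces the localized determinant $-\phi_{12, \qu}\phi_{21, \qu}$ to be a unit, so $\phi_{21, \qu}$ is a unit. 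Since this holds at every prime, $\phi_{21} \colon D^{\iota} \to (D'')^* = (H/D)^*$ is an isomorphism, and the kernel vanishes as well.

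For the ``only if'' direction ($\pm = +$, $4 \mid g$), I will show $\Ext^1_{\RR_g}(H/D_g^+, \RR_g) \neq 0$. Identifying $H/D_g^+ \cong (\bH_g^+)^{\vee}$ via the Pontryagin dual of the defining sequence for $D_g^+$, I use the presentation from Proposition \ref{prop:83}(1) and apply $\Hom_{\RR_g}(-, \RR_g)$; together with $\Ext^i_{\RR_g}(\RR_g, \RR_g) = 0$ for $i \geq 1$, this reduces the cokernel to $\Coker\bigl(\varphi + \varphi^{-1} \colon \Ext^1_{\RR_g}(R_{g, -1}, \RR_g) \to \Ext^1_{\RR_g}(R_{g, -1}, \RR_g)\bigr)$. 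Decomposing $\RR_g = R_{g, -1}[\Delta][[T]]$ with $\Delta = \Gal(\Q_p(\mu_p)/\Q_p)$ and $T$ a topological generator of $\Gal(\Q_p(\mu_{p^\infty})/\Q_p(\mu_p))$, the module $R_{g, -1}$ is supported only in the trivial character component, where $\RR_g^{(1)} \cong R_{g, -1}[[T]]$ and $R_{g, -1} = \RR_g^{(1)}/T$ with $T$ a nonzerodivisor, so the length-one Koszul resolution yields $\Ext^1_{\RR_g}(R_{g, -1}, \RR_g) \cong R_{g, -1}$ and $\Ext^{i \neq 1} = 0$. The cokernel is then $\Coker(\varphi + \varphi^{-1} \colon R_{g, -1} \to R_{g, -1})$, which is nonzero when $4 \mid g$ since on any order-$4$ character component of $R_{g, -1}$ the element $\varphi + \varphi^{-1}$ acts as $i + (-i) = 0$. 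The main obstacle will be this Ext computation, but the $\Delta$-decomposition reducing the situation to a single regular parameter $T$ makes it tractable.
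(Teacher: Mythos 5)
Your proof is correct, but it takes a noticeably different and more computational route than the paper's. The paper's argument is a short application of Proposition \ref{prop:D_str}(ii): for the ``only if'' direction it observes that if \eqref{eq:dual_hom} is an isomorphism then $H^{1}_{\Iw}{}^{\iota}/(D_g^{\pm})^{\iota}\simeq (D_g^{\pm})^*$ is free, so the extension splits and $D_g^{\pm}$ is a direct summand, which forces $\pm=-$ or $4\nmid g$; for the ``if'' direction it notes that the direct-summand property makes $H^*\to (D_g^{\pm})^*$ (and hence \eqref{eq:dual_hom}) surjective, and a surjection between free rank-one modules is an isomorphism. You instead identify the kernel and cokernel of \eqref{eq:dual_hom} with $(H/D)^*/\phi(D^{\iota})$ and $\Ext^1_{\RR_g}(H/D,\RR_g)$ and evaluate each directly: a $2\times 2$ determinant/localization argument (using $\phi_{11}=0$) kills the kernel in the ``if'' direction, and an explicit computation of $\Ext^1_{\RR_g}((\bH_g^+)^{\vee},\RR_g)$ via the presentation of Proposition \ref{prop:83}(1) and the Koszul resolution $\RR_g^{(1)}\xrightarrow{T}\RR_g^{(1)}\to R_{g,-1}$ shows the cokernel is $\Coker(\varphi+\varphi^{-1}\colon R_{g,-1}\to R_{g,-1})\neq 0$ when $4\mid g$. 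This is all sound, and your approach has the bonus of an explicit description of the cokernel of \eqref{eq:dual_hom}, but it re-derives information that Proposition \ref{prop:D_str}(ii) already packages. One phrasing caveat: ``on any order-$4$ character component $\varphi+\varphi^{-1}$ acts as $i+(-i)=0$'' tacitly assumes a primitive fourth root of unity in the coefficient ring; when $p\equiv 3\pmod 4$ there is no such $\Z_p$-valued character, but the argument survives if you phrase it as the direct factor $\Z_p[\sigma]/(\sigma^2+1)$ of $\Z_p[\Z/4]$ (a single non-split factor in that case), on which $\sigma+\sigma^{-1}$ still acts by zero.
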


\begin{proof}
The homomorphism \eqref{eq:dual_hom} is induced because of Proposition \ref{prop:D_inf_orth}.
For the final claim, we use Proposition \ref{prop:D_str}.
If \eqref{eq:dual_hom} is isomorphic, then $(D_g^{\pm})^{\iota}$ is a direct summand of $H^1_{\Iw}(\Q_{p^g, \infty}, T_pE)^{\iota}$, so either $\pm = -$ or $4 \nmid g$ holds.
If either $\pm = -$ or $4 \nmid g$ holds, then both sides of \eqref{eq:dual_hom} are free of rank one and the homomorphism is surjective, so it is isomorphic.
\end{proof}

We also record an ordinary analogue.
The proof is similar; we use Proposition \ref{prop:D_str_ord} instead of Proposition \ref{prop:D_str}.

\begin{prop}\label{prop:dual_isom_ord}
In the situation of Proposition \ref{prop:D_str_ord}, the isomorphism
\[
H^1_{\Iw}(L, T_pE)^{\iota} 
\overset{\sim}{\to} H^1_{\Iw}(L, T_pE)^*
\]
induces an isomorphism
\[
{H^1_{\Iw}(L, T_pE)^{\iota}  \over D^{\iota}} \overset{\sim}{\to} D^*.
\]
\end{prop}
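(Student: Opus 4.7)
The plan is to follow the strategy of the proof of Proposition \ref{prop:dual_isom}, so the first task is to prove the ordinary analogue of Proposition \ref{prop:D_inf_orth}, namely that the $\iota$-twisted pairing
\[
\lra{-, -}_L : H^1_{\Iw}(L, T_pE)^{\iota} \times H^1_{\Iw}(L, T_pE) \to \RR
\]
induced by the local Tate pairing satisfies $\lra{D^{\iota}, D}_L = 0$. Since $D$ is by definition the annihilator of $E(L) \otimes (\Q_p/\Z_p)$ in $H^1_{\Iw}(L, T_pE) \cong H^1(L, E[p^{\infty}])^{\vee}$, and since the Kummer image $E(L) \otimes (\Q_p/\Z_p) \hookrightarrow H^1(L, E[p^{\infty}])$ is classically its own orthogonal complement under the local Tate pairing (at every finite level), $D$ is $\Gal$-stable and the same computation as in the proof of Proposition \ref{prop:D_inf_orth} — expressing $\lra{x^{\iota}, y}_L$ as a sum of Galois translates of Tate pairings and invoking self-annihilation inside each translate — yields the vanishing. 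Working at finite level and passing to the limit, as in the reduction step of Proposition \ref{prop:D_inf_orth}, is the only extra bookkeeping.

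Given this vanishing, the pairing descends to a well-defined $\RR$-linear map
\[
\Phi : \frac{H^1_{\Iw}(L, T_pE)^{\iota}}{D^{\iota}} \to D^*.
\]
The second step is to verify that $\Phi$ is surjective. By Proposition \ref{prop:D_str_ord}(ii), $D$ is a direct summand of the free $\RR$-module $H^1_{\Iw}(L, T_pE)$; hence any $f \in D^* = \Hom_{\RR}(D, \RR)$ extends to an $\RR$-linear map $\widetilde{f} : H^1_{\Iw}(L, T_pE) \to \RR$, and the perfect duality $H^1_{\Iw}(L, T_pE)^{\iota} \overset{\sim}{\to} H^1_{\Iw}(L, T_pE)^*$ realizes $\widetilde{f}$ as $\lra{x, -}_L$ for some $x \in H^1_{\Iw}(L, T_pE)^{\iota}$; its class modulo $D^{\iota}$ is a preimage of $f$.

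Finally, a rank count using Proposition \ref{prop:D_str_ord} completes the argument. Both the source and the target of $\Phi$ are free $\RR$-modules of rank $[k:\Q_p]$: the source is the quotient of a rank $2[k:\Q_p]$ free module by a free direct summand of rank $[k:\Q_p]$, while the target is free of rank $[k:\Q_p]$ by duality. A surjection between two free modules of the same finite rank over a commutative ring is automatically an isomorphism, so $\Phi$ is an isomorphism.

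I do not expect any serious obstacle. Unlike the supersingular $+$ case with $4 \mid g$ in Proposition \ref{prop:D_inf_orth}, here $D$ is unambiguously a free direct summand, so the adaptation of the argument is clean; the only substantive input is the classical self-annihilation of the local Kummer image under the Tate pairing, which holds without restriction in the ordinary case.
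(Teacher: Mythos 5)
Your proof is correct and follows essentially the same route as the paper's (deliberately terse) proof, which just instructs the reader to repeat the argument of Proposition \ref{prop:dual_isom} using Proposition \ref{prop:D_str_ord} in place of Proposition \ref{prop:D_str}. You correctly supply the one implicit ingredient — that the ordinary analogue of Proposition \ref{prop:D_inf_orth} reduces to the classical self-annihilation of the local Kummer image under the Tate pairing — and your closing steps (surjectivity via the direct-summand property and the perfect pairing, then a rank count forcing an isomorphism) are exactly what the paper does in the supersingular case.
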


\subsection{The proof of Theorem \ref{thm:E1}}\label{subsec:pf_E1}

We keep the notations in Theorem \ref{thm:E1}.
We assume Assumption \ref{ass:gen_tor}; more precisely, we only have to assume it for at least one multi-sign $\epsilon'$ with $\epsilon' \leq \epsilon$.

Recall the complex $C_S^{\epsilon}$ introduced in \S \ref{sec:CM_14}.
Then by the very definition of Ext functors, we have an isomorphism
\begin{equation}\label{eq:90}
E^1(\dSel{\epsilon}{S}) 
\simeq H^{-1} ((C_{S}^{\epsilon})^*).
\end{equation}
In Proposition \ref{prop:cpx_dual} below, we will compute $(C_{S}^{\epsilon})^*$.
We prepare preliminary local results in advance.

By the local duality (cf.~\cite[Proposition 3.3]{Kata_12}), we have
\begin{equation}\label{eq:Tate_dual}
\RG_{\Iw}(K_{\infty, v}, T_pE)^*
\simeq \RG_{\Iw}(K_{\infty, v}, T_pE)^{\iota}[2]
\end{equation}
for each finite prime $v$ of $F$.
In Lemmas \ref{lem:loc_dual_p} and \ref{lem:loc_dual} below, we observe the behavior of triangles \eqref{eq:local_cohom_p} and \eqref{eq:local_cohom} under the local duality, respectively.

\begin{lem}\label{lem:loc_dual_p}
We have a morphism between two triangles obtained by \eqref{eq:local_cohom_p}:
\[
\xymatrix{
	{H^1_{\Iw}(K_{\infty} \otimes \Q_p, T_pE) \over D_p^{\epsilon}}[-1]^* \ar[r] 
	& \RG_{\Iw}(K_{\infty} \otimes \Q_p, T_pE)^* \ar[r]
	& D_p^{\epsilon}[-1]^*\\
	D_p^{\eta, \iota}[1] \ar[r] \ar[u]
	& \RG_{\Iw}(K_{\infty} \otimes \Q_p, T_pE)^{\iota}[2] \ar[r] \ar[u]_{\vsim}
	& {H^1_{\Iw}(K_{\infty} \otimes \Q_p, T_pE)^{\iota} \over D_p^{\eta, \iota}}[1] \ar[u],
}
\]
where the middle vertical arrow is the direct sum of \eqref{eq:Tate_dual} for $v = \pe \in S_p(F)$.
Moreover, under the condition $(\star)$ in Theorem \ref{thm:E1}, the vertical arrows are all quasi-isomorphic.
\end{lem}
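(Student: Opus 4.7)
The plan is to treat the diagram prime by prime: both triangles decompose as direct sums over $\pe \in S_p(F)$, and the middle vertical arrow is manifestly the direct sum of the local Tate duality quasi-isomorphisms from \eqref{eq:Tate_dual}. So it suffices to construct the left and right vertical arrows on each summand, verify commutativity of the resulting square, and then establish the quasi-isomorphism assertion under condition $(\star)$.

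For the construction of the outer vertical arrows, I would start from the observation that $D_\pe^{\eta_\pe}$ annihilates $D_\pe^{\epsilon_\pe}$ under the local Tate pairing. Indeed, if $\epsilon_\pe \in \{+, -\}$ then $\eta_\pe = \epsilon_\pe$ and this is precisely Proposition \ref{prop:D_inf_orth}; if $\epsilon_\pe = \rel$ then $D_\pe^{\epsilon_\pe} = 0$ and $D_\pe^{\eta_\pe} = H^1_{\Iw}(K_{\infty, \pe}, T_pE)$, so the statement is vacuous; and for $\pe \in S_p^{\ord}$ the self-annihilating property of $D_\pe$ is built into (the proof of) Proposition \ref{prop:dual_isom_ord}. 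This orthogonality means that the map $H^1_{\Iw}(K_{\infty, \pe}, T_pE)^{\iota} \to H^1_{\Iw}(K_{\infty, \pe}, T_pE)^*$ induced by the Tate pairing restricts to a map $D_\pe^{\eta_\pe, \iota} \to (H^1_{\Iw}(K_{\infty, \pe}, T_pE)/D_\pe^{\epsilon_\pe})^*$ (the left vertical arrow after shifting) and factors through a map $H^1_{\Iw}(K_{\infty, \pe}, T_pE)^{\iota}/D_\pe^{\eta_\pe, \iota} \to (D_\pe^{\epsilon_\pe})^*$ (the right vertical arrow). Commutativity of the two resulting squares is then immediate from the very construction, since all arrows come from restricting or factoring the same pairing.

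For the quasi-isomorphism assertion under $(\star)$, I would check that the right vertical arrow is an isomorphism prime by prime and then invoke the long exact sequence of the triangles (or the five lemma) to conclude that the left vertical arrow is automatically an isomorphism too. At $\pe \in S_p^{\ssr}$ with $\epsilon_\pe = \rel$ both sides vanish. At $\pe \in S_p^{\ssr}$ with $\epsilon_\pe \in \{+, -\}$, condition $(\star)$ ensures that either $\epsilon_\pe = -$ or the residue degree is not divisible by $4$, which is exactly the hypothesis under which Proposition \ref{prop:dual_isom} (applied semi-locally, after identifying the local picture with $\Q_{p^g, \infty}$ via the residue degree as in \S \ref{sec:CM_14}) yields an isomorphism. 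At $\pe \in S_p^{\ord}$, the corresponding statement is Proposition \ref{prop:dual_isom_ord}.

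The main obstacle I anticipate is largely bookkeeping: keeping the shifts $[-1]^* = [1]$ and the $\iota$-twists straight while checking that the square made from the right (or left) vertical arrow and the middle Tate duality really commutes. The content is entirely contained in the orthogonality statement of Proposition \ref{prop:D_inf_orth} together with the explicit structural results of Propositions \ref{prop:dual_isom} and \ref{prop:dual_isom_ord}; no new computation is needed beyond assembling these ingredients and a standard argument with triangles.
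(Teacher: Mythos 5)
Your proposal is correct and follows essentially the same route as the paper: the paper's own proof is the terse statement that Propositions \ref{prop:dual_isom} and \ref{prop:dual_isom_ord} are applied prime by prime and direct sums are taken, and your write-up fills in exactly the details implicit there (orthogonality from Proposition \ref{prop:D_inf_orth} to build the outer arrows, the case split $\epsilon_{\pe} \in \{+,-\}$ versus $\epsilon_{\pe} = \rel$ versus ordinary, and two-out-of-three for triangles to promote the right-hand isomorphism to the left).
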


\begin{proof}
We apply Propositions \ref{prop:dual_isom} and \ref{prop:dual_isom_ord} to each prime $\pe \in S_p(F)$.
By taking the direct sum, we obtain the lemma.
\end{proof}

\begin{lem}\label{lem:loc_dual}
For $v \not \in S_p(F) \cup S_{\ram, p}(K_{\infty}/F)$, 
we have a morphism between two triangles obtained by \eqref{eq:local_cohom}:
\[
\xymatrix{
	H^2_{\Iw}(K_{\infty, v}, T_pE)[-2]^* \ar[r]
	& \RG_{\Iw}(K_{\infty, v}, T_pE)^* \ar[r]
	& H^1_{\Iw}(K_{\infty, v}, T_pE)[-1]^*\\
	H^1_{\Iw}(K_{\infty, v}, T_pE)^{\iota}[1] \ar[r] \ar[u]
	& \RG_{\Iw}(K_{\infty, v}, T_pE)^{\iota}[2] \ar[r] \ar[u]_{\vsim}
	&H^2_{\Iw}(K_{\infty, v}, T_pE)^{\iota}[0], \ar[u].
}
\]
where the middle vertical arrow is \eqref{eq:Tate_dual}.
Moreover, the cone of the left vertical arrow is quasi-isomorphic to $E^2(H^2_{\Iw}(K_{\infty, v}, T_pE))[0]$;
in other words, we have a triangle
\[
H^1_{\Iw}(K_{\infty, v}, T_pE)^{\iota}[1] 
\to H^2_{\Iw}(K_{\infty, v}, T_pE)[-2]^*
\to E^2(H^2_{\Iw}(K_{\infty, v}, T_pE))[0].
\]
\end{lem}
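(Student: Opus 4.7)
The plan is to build both triangles from the single triangle \eqref{eq:local_cohom}: the top row by applying $\RHom_{\RR}(-,\RR)$, the bottom row by applying $(-)^{\iota}[2]$, with the middle vertical arrow coming from local Tate duality \eqref{eq:Tate_dual}. To promote this isomorphism to a morphism of triangles I construct the left vertical arrow by factoring the composite
\[
H^1_{\Iw}(K_{\infty,v},T_pE)^{\iota}[1] \to \RG_{\Iw}(K_{\infty,v},T_pE)^{\iota}[2] \xrightarrow{\sim} \RG_{\Iw}(K_{\infty,v},T_pE)^{*} \to H^1_{\Iw}(K_{\infty,v},T_pE)[-1]^{*}
\]
through zero in $D(\RR)$; by the standard axiom on morphisms of triangles a compatible right vertical arrow then also exists. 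For the vanishing, note that $H^1_{\Iw}(K_{\infty,v},T_pE)[-1]^{*} = \RHom_{\RR}(H^1_{\Iw},\RR)[1]$ has cohomology $E^i(H^1_{\Iw})$ in degree $i-1$. Since $v \nmid p$, local duality supplies $H^2_{\Iw}(K_{\infty,v},T_pE) \simeq E(K_{\infty,v})[p^{\infty}]^{\vee}$ (cofinitely generated over $\Z_p$, hence $\RR$-torsion) and $H^0_{\Iw}(K_{\infty,v},T_pE) = 0$; the local Euler characteristic then forces $H^1_{\Iw}$ to be $\RR$-torsion, so $E^0(H^1_{\Iw}) = 0$. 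Combined with $\pd_{\RR}(H^1_{\Iw}) \leq 1$ from Lemma \ref{lem:local_cohom}, this yields a quasi-isomorphism $H^1_{\Iw}(K_{\infty,v},T_pE)[-1]^{*} \simeq E^1(H^1_{\Iw})[0]$, after which $\Hom_{D(\RR)}(H^1_{\Iw}^{\iota}[1], E^1(H^1_{\Iw})[0]) = \Ext_{\RR}^{-1}(H^1_{\Iw}^{\iota}, E^1(H^1_{\Iw})) = 0$ gives the vanishing.

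For the cone, $H^2_{\Iw}(K_{\infty,v},T_pE)[-2]^{*}$ has cohomology $E^i(H^2_{\Iw})$ in degree $i-2$, and since $H^2_{\Iw}$ is $\RR$-torsion with $\pd_{\RR}(H^2_{\Iw}) \leq 2$, this reduces to $E^1(H^2_{\Iw})$ in degree $-1$ and $E^2(H^2_{\Iw})$ in degree $0$. The long exact cohomology sequence associated to the triangle
\[
H^1_{\Iw}(K_{\infty,v},T_pE)^{\iota}[1] \to H^2_{\Iw}(K_{\infty,v},T_pE)[-2]^{*} \to \mathrm{Cone}
\]
then shows that $\mathrm{Cone} \simeq E^2(H^2_{\Iw})[0]$ exactly when the induced map $H^1_{\Iw}^{\iota} \to E^1(H^2_{\Iw})$ on $H^{-1}$ is an isomorphism. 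I establish the latter via the hypercohomology spectral sequence
\[
E_2^{p,q} = \Ext^p_{\RR}(H^{-q}(\RG_{\Iw}),\RR) \Rightarrow H^{p+q}(\RG_{\Iw}^{*}) \simeq H^{p+q+2}_{\Iw}(K_{\infty,v},T_pE)^{\iota},
\]
whose abutment is identified via Tate duality \eqref{eq:Tate_dual}: in total degree $-1$ only the terms $E^0(H^1_{\Iw})$ and $E^1(H^2_{\Iw})$ appear, and the former vanishes by torsion-ness of $H^1_{\Iw}$, leaving $H^1_{\Iw}^{\iota} \simeq E^1(H^2_{\Iw})$ as required.

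The main obstacle I anticipate is the compatibility check: verifying that the left vertical arrow produced by the factorization above really induces this spectral-sequence isomorphism on $H^{-1}$, rather than merely some natural map differing by an automorphism. This reduces to identifying an edge homomorphism of the spectral sequence with the connecting map coming from the triangle, which is a formal but careful unwinding of definitions. The remainder of the argument is routine derived-category manipulation together with the local input that $H^2_{\Iw}(K_{\infty,v},T_pE) \simeq E(K_{\infty,v})[p^\infty]^{\vee}$ is cofinitely generated over $\Z_p$.
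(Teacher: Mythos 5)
Your proof is correct, and the overall strategy---build the morphism of triangles and then analyze the cone via its cohomology in each degree---is the same as the paper's. Your route to the left vertical arrow (show the composite into $H^1_{\Iw}(K_{\infty,v},T_pE)[-1]^*$ vanishes via the $\Ext^{-1}$ computation, then lift by the triangle axiom) is a clean reformulation of the paper's truncation argument, and the cohomological bookkeeping for the cone matches the paper's. However, the spectral-sequence detour at the end is unnecessary, and the compatibility worry you raise is unfounded. Once a left vertical arrow exists making the left square commute, the induced map on $H^{-1}$ is forced to be an isomorphism for elementary reasons: taking $H^{-1}$ of the left square, the top edge $H^{-1}(H^2_{\Iw}(K_{\infty,v},T_pE)[-2]^*) \to H^{-1}(\RG_{\Iw}(K_{\infty,v},T_pE)^*)$ is an isomorphism by the long exact sequence of the dualized triangle (since $H^{-2}$ and $H^{-1}$ of $H^1_{\Iw}(K_{\infty,v},T_pE)[-1]^*$ both vanish), the right edge is the Tate-duality quasi-isomorphism, and the bottom edge is the identity on $H^1_{\Iw}(K_{\infty,v},T_pE)^{\iota}$ (because the first arrow of \eqref{eq:local_cohom} is the identity on $H^1$). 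Hence so is the left edge, and the long exact sequence of the cone triangle then gives $\mathrm{Cone}\simeq E^2(H^2_{\Iw}(K_{\infty,v},T_pE))[0]$ directly, with no need to identify edge homomorphisms of a hyperext spectral sequence.
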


\begin{proof}
By Lemma \ref{lem:local_cohom} and the fact that $H^i_{\Iw}(K_{\infty, v}, T_pE)$ is a torsion module for $i = 1, 2$, we have
\[
H^i(H^1_{\Iw}(K_{\infty, v}, T_pE)[-1]^*) = 0
\]
for $i \neq 0$ and 
\[
H^i(H^2_{\Iw}(K_{\infty, v}, T_pE)[-2]^*) = 0
\]
 for $i \neq -1, 0$.
Thus the upper triangle of this lemma implies
\[
H^{-1}(H^2_{\Iw}(K_{\infty, v}, T_pE)[-2]^*) 
\simeq H^{-1}(\RG_{\Iw}(K_{\infty, v}, T_pE)^*)
\simeq H^1_{\Iw}(K_{\infty, v}, T_pE)^{\iota},
\]
where the final isomorphism is due to the middle vertical arrow.
This implies the existence of the left vertical arrow, and so the right vertical arrow also exists.
We have
\[
H^{0}(H^2_{\Iw}(K_{\infty, v}, T_pE)[-2]^*) 
\simeq E^2(H^2_{\Iw}(K_{\infty, v}, T_pE))
\]
by the definition of Ext functor, so the final assertion also holds.
\end{proof}

\begin{prop}\label{prop:cpx_dual}
Under the condition $(\star)$, we have a triangle
\[
C_S^{\eta, \iota} \to (C_S^{\epsilon})^*[-3]
 \to \bigoplus_{v \in S} \RG_{\Iw}(K_{\infty, v}, T_pE)^{\iota}
\oplus \bigoplus_{v \in \Sigma_0 \setminus S} E^2(H^2_{\Iw}(K_{\infty, v}, T_pE)) [-2].
\]
\end{prop}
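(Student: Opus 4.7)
The plan is to dualize the defining triangle of $C_S^{\epsilon}$, apply local and global dualities to re-express the resulting terms, and then compare with the defining triangle of $C_S^{\eta,\iota}$ by constructing a morphism of triangles with a common object, so that the $3\times 3$ lemma isolates the claimed cofiber. More precisely, starting from the defining triangle $C_S^{\epsilon} \to \RG_{\Iw}(K_{\infty,\Sigma}/K_{\infty},T_pE) \to L^{\epsilon}$, I would apply $\RHom_{\RR}(-,\RR)$ and shift by $[-3]$, so that $(C_S^{\epsilon})^{*}[-3]$ appears as the cofiber of a map $L^{\epsilon,*}[-3] \to \RG_{\Iw}(K_{\infty,\Sigma}/K_{\infty},T_pE)^{*}[-3]$. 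By global Iwasawa-theoretic duality (of Artin--Verdier/Poitou--Tate type for perfect Iwasawa modules) combined with the self-duality $T_pE \simeq T_pE^{\vee}(1)$, the middle term $\RG_{\Iw}(K_{\infty,\Sigma}/K_{\infty},T_pE)^{*}[-3]$ matches naturally with an $\iota$-twist (and suitable shift) of $G := \RG(K_{\infty,\Sigma}/K_{\infty},E[p^{\infty}])^{\vee}[-2]$, which is the object appearing on the right of the Poitou--Tate presentation \eqref{eq:PTdual_e}. This provides a common object through which to compare the dualized triangle to the rotation of \eqref{eq:PTdual_e} (twisted by $\iota$), which reads $G^{\iota}[-1] \to C_S^{\eta,\iota} \to K^{\eta,\iota}$, where $K^{\eta} = D_p^{\eta}[-1] \oplus \bigoplus_{v \in \Sigma_0 \setminus S} H^1_{\Iw}(K_{\infty,v},T_pE)[-1]$.

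Next, Lemmas \ref{lem:loc_dual_p} and \ref{lem:loc_dual} together with \eqref{eq:Tate_dual} decompose $L^{\epsilon,*}[-3]$ (after the appropriate shift to align with $K^{\eta,\iota}$) place-by-place: at each $v \in S$ it contributes $\RG_{\Iw}(K_{\infty,v},T_pE)^{\iota}$; at the $p$-adic part it contributes $D_p^{\eta,\iota}[-1]$ (using condition $(\star)$); and at each $v \in \Sigma_0 \setminus S$ it contributes a complex fitting into a triangle
\[
H^1_{\Iw}(K_{\infty,v},T_pE)^{\iota}[-1] \to (\cdot) \to E^2(H^2_{\Iw}(K_{\infty,v},T_pE))[-2].
\]
I would then define a natural map $\psi: K^{\eta,\iota} \to L^{\epsilon,*}[-3][1]$ as the identity on the $D_p^{\eta,\iota}[-1]$-summand, the canonical map from the Lemma \ref{lem:loc_dual} triangle on each $v \in \Sigma_0 \setminus S$-summand, and no contribution at $v \in S$. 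Extending $\psi$ together with the identity on the common object $G^{\iota}[-1]$ to a morphism of the two triangles above yields the desired comparison map $\phi: C_S^{\eta,\iota} \to (C_S^{\epsilon})^{*}[-3]$ in the middle column.

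Finally, applying the $3 \times 3$ lemma (the cone of the identity on $G^{\iota}[-1]$ being zero) identifies the cone of $\phi$ with the cone of $\psi$, which by summand-by-summand inspection equals
\[
\bigoplus_{v \in S} \RG_{\Iw}(K_{\infty,v},T_pE)^{\iota} \oplus \bigoplus_{v \in \Sigma_0 \setminus S} E^2(H^2_{\Iw}(K_{\infty,v},T_pE))[-2],
\]
completing the proof. The main technical obstacle will be pinning down the global-duality identification with correct shifts, and verifying that $\psi$ together with the identity on $G^{\iota}[-1]$ truly extends to a morphism of triangles, i.e., is compatible with the connecting maps $K^{\eta,\iota} \to G^{\iota}$ and $L^{\epsilon,*}[-3][1] \to G^{\iota}$. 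This amounts to a functoriality statement relating the Poitou--Tate connecting map in \eqref{eq:PTdual_e} with the composite of the local dualities (Lemmas \ref{lem:loc_dual_p} and \ref{lem:loc_dual}) and the global Iwasawa duality; modulo careful bookkeeping of shifts and signs, it should follow from the naturality of the underlying cup-product pairings.
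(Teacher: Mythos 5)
Your proposal is correct and follows essentially the same route as the paper: dualize the defining triangle of $C_S^{\epsilon}$, then compare it with the Poitou--Tate presentation \eqref{eq:PTdual_e} for $\eta$ (twisted by $\iota$) using \eqref{eq:Tate_dual} together with Lemmas \ref{lem:loc_dual_p} and \ref{lem:loc_dual} and the global duality quasi-isomorphism. Your framing via a morphism of triangles with common vertex and the $3\times 3$ lemma just makes explicit the triangle bookkeeping that the paper leaves implicit, and you correctly flag the compatibility of the connecting maps as the point that needs verification.
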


\begin{proof}
By the linear dual of the definition of $C_S^{\epsilon}$ in Definition \ref{defn:C_S_e}, after translation, we obtain a triangle
\begin{align}\label{eq:compar}
& (C_S^{\epsilon})^*[-3]\\
&\to \bigoplus_{v \in S} \RG_{\Iw}(K_{\infty, v}, T_pE)^*[-2]
\oplus \parenth{H^1_{\Iw}(K_{\infty} \otimes \Q_p, T_pE) \over D_p^{\epsilon}}[1]^*
\oplus \bigoplus_{v \in \Sigma_0 \setminus S} H^2_{\Iw}(K_{\infty, v}, T_pE)[0]^*\\
&\to \RG_{\Iw}(K_{\infty, \Sigma}/K_{\infty}, T_pE)^*[-2].
\end{align}
We compare this with \eqref{eq:PTdual_e} for $\eta$ instead of $\epsilon$.
This is possible by \eqref{eq:Tate_dual}, Lemmas \ref{lem:loc_dual_p} and \ref{lem:loc_dual}, and a quasi-isomorphism
\[
\RG_{\Iw}(K_{\infty, \Sigma}/K_{\infty}, T_pE)^*
\simeq \RG(K_{\infty, \Sigma}/K_{\infty}, E[p^{\infty}])^{\vee}.
\]
As a result, we obtain the desired triangle.
\end{proof}

\begin{proof}[Proof of Theorem \ref{thm:E1}]
Recall the isomorphism \eqref{eq:90}.
The triangle in Proposition \ref{prop:cpx_dual} gives an exact sequence
\begin{align}
\bigoplus_{v \in S} H^1_{\Iw}(K_{\infty, v}, T_pE)^{\iota}
& \to H^2(C_S^{\eta, \iota}) \to H^{-1}((C_S^{\epsilon})^*) \\
& \to \bigoplus_{v \in S} H^2_{\Iw}(K_{\infty, v}, T_pE)^{\iota}
\oplus \bigoplus_{v \in \Sigma_0 \setminus S} E^2(H^2_{\Iw}(K_{\infty, v}, T_pE))
\to 0.
\end{align}
Since $H^2(C_S^{\eta, \iota}) \simeq (\dSel{\eta}{S})^{\iota}$, the cokernel of the first map to $H^2(C_S^{\eta, \iota})$ is isomorphic to $(\dSel{\eta}{})^{\iota}$.

As in the proof of Lemma \ref{lem:local_cohom}, for each $v \in \Sigma_0 \setminus S$, the $\RR$-module $H^2_{\Iw}(K_{\infty, v}, T_pE)$ is induced from $\RR_v$ and the Krull dimension of $\RR_v$ is two.
This implies $\pd_{\RR}(H^2_{\Iw}(K_{\infty, v}, T_pE)_{/\PN}) \leq 1$, so we have
\[
E^2(H^2_{\Iw}(K_{\infty, v}, T_pE)_{\PN})
\simeq E^2(H^2_{\Iw}(K_{\infty, v}, T_pE)).
\]
Moreover, by the local duality, we have $H^2_{\Iw}(K_{\infty, v}, T_pE) \simeq H^0(K_{\infty, v}, E[p^{\infty}])^{\vee}$ for each finite prime $v$ of $F$.
This completes the proof of Theorem \ref{thm:E1}.
\end{proof}

\section*{Acknowledgments}

I am sincerely grateful to Masato Kurihara for his constant support and encouragement during this research.
I also thank Antonio Lei and Bharathwaj Palvannan for their responses to my queries concerning their paper \cite{LP19}.
This research was supported by JSPS KAKENHI Grant Number 19J00763.

{
\bibliographystyle{abbrv}
\bibliography{biblio}
}

\end{document}